
\documentclass[12pt,reqno,a4paper]{article}
\usepackage{euscript}
\usepackage{amsmath,amssymb,amsfonts,amsthm}
\usepackage{mathtools}
\usepackage{dsfont}
\usepackage{sectsty}
\usepackage[colorlinks=true, linkcolor=blue, citecolor=magenta]{hyperref}
\setlength{\marginparwidth}{2cm}

\newtheoremstyle{special}%
{}%
{}%
{}%
{}%
{\scshape}%
{.}%
{.5em}%
{}

\newtheorem{theorem}{Theorem}
\newtheorem{proposition}[theorem]{Proposition}
\newtheorem{lemma}[theorem]{Lemma}
\newtheorem{cor}[theorem]{Corollary}
\newtheorem{definition}[theorem]{Definition}

\theoremstyle{special}
\newtheorem{remark}[theorem]{Remark}
\newtheorem{example}[theorem]{Example}

\allsectionsfont{\centering\mdseries\normalsize\scshape}

\renewcommand{\epsilon}{\varepsilon}
\renewcommand{\L}{\mathcal{L}}

\def\Id{\text{\rm Id}}

\def\N{\mathbb{N}}
\def\Z{\mathbb{Z}}
\def\R{\mathbb{R}}
\def\B{\mathcal{B}}
\def\C{\mathcal C}

\DeclareMathOperator{\var}{var}
\DeclareMathOperator{\esssup}{esssup}
\DeclareMathOperator{\essinf}{essinf}

\DeclareMathOperator{\Cov}{Cov}

\title{A vector-valued almost sure invariance principle for random expanding on average cocycles}

\date{\today}

\author{D. Dragi\v cevi\' c \footnote{Department of Mathematics, University of Rijeka, Rijeka Croatia. {\tt E-mail: ddragicevic@math.uniri.hr}.}\and 
	Y. Hafouta\footnote{Department of Mathematics, The Ohio State University. {\tt E-mail: yeor.hafouta@mail.huji.ac.il, hafuta.1@osu.edu}.}\and
	J. Sedro\footnote{Laboratoire  de Probabilit\'es,  Statistique et Mod\'elisation  (L.P.S.M.), Sorbonne  Universit\'e. {\tt E-mail: sedro@lpsm.paris}.}}

\begin{document}
	\maketitle
	\begin{abstract}
		We obtain a quenched  vector-valued almost sure invariance principle (ASIP) for  random expanding on average cocycles. This is achieved by combining the adapted version  of Gou\"ezel's approach for establishing ASIP (developed in~\cite{DH3}) and the recent construction of the so-called adapted norms (carried out in~\cite{DDS}), which in some sense eliminate the non-uniformity of the decay of correlations. 
		For real-valued observables,  we also show that the martingale approximation technique is applicable in our setup, and that it yields the ASIP with  better error rates. 
		Finally, we present an example showing the necessity of the scaling condition \eqref{eq:condobservable}, answering a question of \cite{DDS}.
	\end{abstract} 
	

	\section{Introduction}\label{Sec1}
	The almost sure invariance principle (ASIP)  is a  powerful statistical tool that,
	given a sequence of vector-valued random variables $A_0,A_1,A_1,\ldots$, provides a coupling with an independent sequence of Gaussian random vectors $Z_0,Z_1,Z_2,\ldots$ such that
	\begin{equation}\label{sn}
	\left|\sum_{j=0}^{n-1}A_j-\sum_{j=0}^{n-1}Z_j\right|=o(s_n),
	\end{equation}
	where $s_n$ is called the \emph{rate} of the ASIP, 
	and the $L^2$-norm of the sum $\sum_{j=0}^{n-1}Z_j$ has the form $s_n(1+o(1))$. Among several other limit theorems, ASIP implies the central and the functional central limit theorem (see~\cite{PS} for details).
	
	\paragraph{ASIP for deterministic dynamics.}
	In the context of \emph{deterministic} dynamics, one starts with a transformation $T$ acting on a space $X$ that admits a (physical) invariant  measure $\mu$. For sufficiently regular observables $\psi \colon X \to \R^d$, we consider the process $\psi, \psi \circ T, \psi \circ T^2, \ldots $ on the probability space $(X, \mu)$, and we are interested in formulating sufficient conditions under which it satisfies the ASIP. It is expected that this will occur when $T$ is sufficiently chaotic, i.e. when $T$ exhibits some form of hyperbolicity. 
	
	We emphasize that this problem has been thoroughly studied and that the literature is vast. Among many important contributions, we mention the works of Pollicott and Sharp~\cite{MPS}, 
	Field, Melbourne and T\"or\"ok~\cite{FieldMelbourneTorok} as well as Melbourne and Nicol~\cite{MN1, MN2} (completed recently  by Korepanov~\cite{KorepanovEq}), in which the authors obtained ASIP for large classes of (nonuniformly) hyperbolic maps. In addition, we mention the recent important papers by Cuny and Merlevede~\cite{CM},  Korepanov, Kosloff and Melbourne~\cite{KZM}, Korepanov~\cite{Korepanov}, as well as Cuny, Dedecker,  Korepanov and  Merlevede~\cite{CDKM, CDKM2} in which the authors
	further improved the error rates in ASIP for a wide class of (nonuniformly) hyperbolic deterministic dynamical systems. 
	
	Finally, and most relevant to the content of the present paper,  we mention the seminal paper by Gou\"ezel~\cite{GO}, in which is developed the so-called \emph{spectral approach} for establishing the ASIP, which is applicable whenever the transfer operator associated to $T$ admits a spectral gap (on an appropriate Banach space).
	
	\paragraph{ASIP for random dynamics.}
	In the case of \emph{random dynamics}, one starts with a base space which consists of a probability space $(\Omega, \mathcal F, \mathbb P)$ together with an invertible measure-preserving and ergodic transformation $\sigma \colon \Omega \to \Omega$. Moreover, one considers a measurable family of transformations
	$(T_\omega)_{\omega \in \Omega}$ acting on some space $X$. We study random compositions of the form
	\[
	T_\omega^n=T_{\sigma^{n-1} \omega} \circ \ldots \circ T_\omega, \quad \omega \in \Omega, \ n\in \mathbb N.
	\]
	From this data, we can define the skew-product transformation $\tau \colon \Omega \times X \to \Omega \times X$ by 
	\begin{equation}\label{SP}
		\tau(\omega, x)=(\sigma \omega, T_\omega(x)), \quad (\omega, x)\in \Omega \times X.
	\end{equation}
	For any $\tau$-invariant measure $\mu$, there exists  a family $(\mu_\omega)_{\omega \in \Omega}$ of measures on $X$ such that 
	\begin{equation}\label{mu}
		\mu (A\times B)=\int_A \mu_\omega(B)\, d\mathbb P(\omega), \quad \text{for $A\in \mathcal F$ and $B\subset X$ measurable.}
	\end{equation}
	Given a sufficiently regular (random) observable $\psi \colon \Omega \times X \to \R^d$, one can either study the process 
	\begin{equation}\label{an}
		\psi, \psi \circ \tau, \ldots, \psi \circ \tau^n  \quad \text{on $(\Omega \times X, \mu)$,}
	\end{equation}
	or for $\mathbb P$-a.e. $\omega \in  \Omega$, the process
	\begin{equation}\label{q}
		\psi (\omega, \cdot), \psi (\sigma \omega, \cdot )\circ T_\omega, \ldots, \psi (\sigma^n \omega, \cdot) \circ T_\omega^n \quad \text{on $(X, \mu_\omega)$.}
	\end{equation}
	Then, limit laws related to~\eqref{an} are called \emph{annealed}, whereas those concerned with~\eqref{q} are called \emph{quenched}. We would like to stress that annealed limit theorems concern the stationary process $(\psi\circ T^n)_{n=0}^\infty$, while in the quenched case the process $(\psi (\sigma^n \omega, \cdot) \circ T_\omega^n)_{n=0}^\infty$ is not stationary, which to some extent makes the quenched limit theorems harder to prove. On the other hand, without some kind of mixing assumptions on the base map $\sigma$ annealed limit theorems cannot hold (in general). For i.i.d maps $T_{\sigma^j\omega}$  (as discussed in the next paragraph) limit theorems can be obtained by integration of the iterates of the random transfer operators (see \cite{ANV}), while for some other classes of base maps such as Markov shifts or non-uniform Young towers a different type of integration argument yields several types of limit theorems (see \cite{HaETDS}). However, it is still unclear which type of mixing conditions are necessary for annealed limit theorems to hold.

	To the best of our knowledge, the  quenched ASIP in the context of random dynamical systems was first discussed by Kifer~\cite{kifer}, where it was briefly mentioned that the techniques developed there can be used to obtain the scalar-valued quenched ASIP for random expanding dynamics. More recently, both annealed and quenched  ASIP were 
	discussed for several classes of random dynamical systems~\cite{ANV,STE,STSU}. The main idea in those papers is to show that the transfer operator associated to the skew-product transformation (see~\eqref{SP}) has a spectral gap on an appropriate space. Afterwards, one can simply apply Gou\"ezel's results~\cite{GO}. However, for this approach to work, one needs to impose strong (mixing) assumptions on the base space $(\Omega, \mathcal F, \mathbb P)$. In fact, in all of those works, $(\Omega, \mathcal F, \mathbb P)$ is a Bernoulli shift. By using martingale methods and relying on the techniques developed in~\cite{CM, HNTV}, in~\cite{DFGTV1} a quenched scalar-valued ASIP for certain classes of random piecewise expanding dynamics was obtained, without any mixing assumptions on the base space. In addition, we mention two recent papers \cite{Su1, Su2} by Su devoted to the ASIP for certain classes of random expanding systems and maps which admit a random tower extension.
	
	In order to apply directly the approach developed by Gou\"ezel~\cite{GO} for establishing the ASIP in the context of random dynamics, it seems necessary to impose mixing assumptions on the base space $(\Omega, \mathcal F, \mathbb P)$. To overcome this issue, the first two authors~\cite{DH3} have proposed  a certain adaptation of Gou\"ezel's method, by requiring a weaker control over the behavior of  covariance matrices. This adaptation enabled them to extend the ASIP result from~\cite{DFGTV1} to the case of vector-valued observables, still without any mixing assumptions on the base space. Moreover, it allowed to establish the first version of the vector-valued ASIP for broad classes of random (uniformly) hyperbolic dynamics as studied in~\cite{DH, DZ, DFGTV}.
	
	\paragraph{Contributions of the present paper.}
	Despite representing a significant progress, the abstract version of the ASIP for random dynamical systems formulated in~\cite[Theorem 4.18]{DH3} is not entirely satisfactory as it is not directly applicable to random dynamical systems which exhibit nonuniform decay of correlations, such as expanding on average systems studied by Buzzi~\cite{Buzzi}. Indeed, condition~\cite[(4.6)]{DH3} requires uniform decay of correlations: we refer to Remark~\ref{ej} for more details.
	We note that the presence of the  nonuniform decay of correlations essentially means that  $D$ in~\eqref{est1x} is allowed to depend on the random parameter $\omega$. This relaxation (with respect to previous works) is natural from the ergodic theory point of view. We refer to Remark~\ref{newrmk} for details.

	The purpose of the present paper is to fill the gap between the uniformly expanding case and the nonuniform one. More precisely, we combine techniques from~\cite{DH3} together with those developed by the first and the third author in~\cite{DDS}, to obtain the quenched vector-valued ASIP for random expanding on average. We stress that in~\cite{DDS} several other limit theorems for random expanding on average cocycles have been discussed but not the ASIP.
	 As mentioned above, in comparison with several existing result in literature (e.g. \cite{HNTV, DFGTV1, DH3}), we consider random dynamical systems which only expand on average. In another direction, 
	in comparison with the expanding on average random maps considered in \cite{ANV}, our results do not require the maps $T_{\sigma^j\omega}$ to be independent, and the observable $\psi$ considered in this paper might depend on $\omega$. In fact, in Appendix A we show that, in general, when $\psi$ does not depend on $\omega$ then the usual asymptotic statistical behaviour might fail.
	
	As in~\cite{DDS}, the main idea is to construct suitable ``adapted norms'', which enable us to absorb the nonuniformity in the decay of correlations. Unlike~\cite{DDS}, this construction is not carried for an original cocycle of transfer operators but rather for an associated cocycle of normalized transfer operators. We highlight that, 
	to the best of our understanding, one cannot simply rely on the construction developed in~\cite[Section 3.1]{DDS}. In fact, we have to restrict to a slightly smaller class of cocycles from those considered in~\cite{DDS} (see Remark~\ref{Rm}). Afterwards, it remains to verify that Theorem~\cite[Theorem 2.1]{DH3} can be applied. 
	
	We also note that this approach is completely different from the techniques in~\cite{kifer, DH1} which rely on inducing with respect to a region of the base space on which the random dynamics exhibits a uniform decay of correlations, and refer to~\cite[Section 1]{DDS} for a detailed discussion. 
	
	In the second part of the paper, by using the martingale method together with techniques developed in~\cite{CM} and~\cite{KZM}, we establish the scalar-valued ASIP for a  smaller class of observables but with better error rates (that is, with better estimates on the right hand side of~\eqref{sn}). We stress that even if we restrict to the setup from~\cite{DFGTV1, DFGTV2} (namely assume uniform decay of correlations), we obtain slightly better rates than those given in~\cite[Theorem 1]{DFGTV1}.
	\par Finally, in an Appendix A, we present an example, essentially taken from \cite[Appendix A]{Buzzi}, of a random system and an observable satisfying all of our assumptions, except for the scaling condition \eqref{eq:condobservable}, for which the asymptotic variance fails to exist: this shows the sharpness of this scaling condition, answering in particular a question posed in (the original version of) \cite{DDS}.

\paragraph{Comments and directions of future research.}
We emphasize that our results (see for example Theorem~\ref{TM}) require  a certain control over the size of an observable (condition~\eqref{eq:condobservable}). This condition does not imply that the class of our observables is small:  indeed, one can note that the observables satisfying~\eqref{eq:condobservable} are in one-to-one correspondence with observables satisfying~\eqref{aaab}.
However, condition~\eqref{eq:condobservable} involves the so-called Oseledets-Lyapunov regularity function ($K$ in~\eqref{eq:condobservable}) which is notoriously hard to compute explicitly (as it is given as a supremum of a certain quantity).

Beyond temperedness, the study of properties of Oseledets-Lyapunov regularity functions has been initiated only recently by Simi\' c~\cite{S}. In particular, he shows that under appropriate regularity assumptions for a linear cocycle and mixing assumptions for a base space, one can achieve that these regularity functions belong to the $L^p$ space for sufficiently small values of $p>0$. Other relevant contributions to this area of research were obtained  more recently by Gou\"ezel and Stoyanov~\cite{GS}.

We would like to emphasize that conditions similar to~\eqref{eq:condobservable} have appeared earlier in the study of random dynamical systems. Indeed, an analogous requirement is present in the study of invariant manifolds (see~\cite[(7.3.2)]{Arnold}) and  linearization (see~\cite[Proposition 7.4.11]{Arnold}) of (nonlinear) random dynamical systems.\footnote{One should notice that the conditions in~\cite{Arnold} are stated in terms of the so-called Lyapunov norms. However, the random variable which measures the deviation of Lyapunov norms from the original norm is precisely Oseledets-Lyapunov regularity function.} We refer to~\cite[p.379]{Arnold} for a detailed discussion.  Moreover, a similar condition ensures persistence of nonuniform behaviour under small perturbations (see~\cite[(2.2)]{ZLZ}). Closer to the context of the present paper, the random variable measuring the speed of (exponential) decay of correlations for expanding on average cocycles (see~\cite[Main Theorem]{Buzzi}) is also not given explicitly. 
This indicates that the complexity of~\eqref{eq:condobservable} is not a consequence of our techniques but rather of intrinsic difficulties in treating nonuniformly hyperbolic dynamics.

Although this makes our results somewhat unsatisfactory, we still believe that the present approach offers a new insight on limit theorems for random systems which exhibit nonuniform decay of correlations. In particular, the above described connection with the Oseledets-Lyapunov regularity function sheds some light on the difficulty of providing explicit conditions for limit theorems in our setting.
Moreover,  our example from Appendix A, indicates that some condition similar to~\eqref{eq:condobservable} needs to be imposed for limit theorems to hold.

Recently, the second author~\cite{H} has made a significant breakthrough and obtained  explicit conditions under which limit theorems hold for certain classes of random systems exhibiting nonuniform decay of correlations. However, these classes do not include random expanding of average cocycles. In particular, it is still an open problem to describe explicitly the set of observables for which limit theorems hold in the framework of the simple example presented in Appendix A.

In conclusion, if our results do not aim to say the final word on the topic of limit theorems for expanding on average cocycles, they certainly represent a step forward.

	\section{A vector valued ASIP via Gou\"ezel's approach}\label{SecGou}
	\subsection{Preliminaries}
	We begin by recalling the setup from~\cite{Buzzi} (and also from~\cite{DDS}). Let $(X, \mathcal G)$ be a measurable space endowed with a probability measure $m$ and a notion of a variation $\var \colon L^1(X, m) \to [0, \infty]$ which satisfies
	the following conditions:
	\begin{enumerate}
		\item[(V1)] $\var (th)=\lvert t\rvert \var (h)$;
		\item[(V2)] $\var (g+h)\le \var (g)+\var (h)$;
		\item[(V3)] $\lVert h\rVert_{L^\infty} \le C_{\var}(\lVert h\rVert_1+\var (h))$ for some constant $1\le C_{\var}<\infty$;
		\item[(V4)] for any $C>0$, the set  $\{h\colon X \to \mathbb R: \lVert h\rVert_1+\var (h) \le C\}$ is $L^1(m)$-compact;
		\item[(V5)] $\var(\mathds 1)=0$, where $\mathds 1$ denotes the function equal to $1$ on $X$;
		\item[(V6)] $\{h \colon X \to \mathbb R_+: \lVert h\rVert_1=1 \ \text{and} \ \var (h)<\infty\}$ is $L^1(m)$-dense in
		$\{h\colon X \to \mathbb R_+: \lVert h\rVert_1=1\}$;
		\item[(V7)] for any $f\in L^1(X, m)$ such that $\essinf f>0$, we have \[\var(1/f) \le \frac{\var (f)}{(\essinf f)^2}.\]
		\item[(V8)] $\var (fg)\le \lVert f\rVert_{L^\infty}\cdot \var(g)+\lVert g\rVert_{L^\infty}\cdot \var(f)$;
		\item[(V9)] for $M>0$, $f\colon X \to [-M, M]$ measurable and  every $C^1$ function $h\colon [-M, M] \to \mathbb C$, we have
		$\var (h\circ f)\le \lVert h'\rVert_{L^\infty} \cdot \var(f)$.
	\end{enumerate} We define
	\[
	BV=BV(X,m)=\{g\in L^1(X, m): \var (g)<\infty \}.
	\]
	Then, $BV$ is a Banach space with respect to the norm
	\[
	\lVert g\rVert_{BV} =\lVert g\rVert_1+ \var (g).
	\]
	\begin{remark}
		Observe that (V3) and (V8) imply that 
		\begin{equation}\label{mc}
			\|fg\|_{BV} \le C_{var} \|f\|_{BV} \cdot \|g\|_{BV} \quad \text{for $f, g\in BV$.}
		\end{equation}
		
	\end{remark}
	
	\begin{remark}
		We observe that in \cite{Buzzi}, assumption (V5) is replaced by the weaker $\var(\mathds 1)<+\infty$. However, for the examples we have in mind, our stronger version is satisfied. In particular, (V5) implies that $\| \mathds 1\|_{BV}=1$.
	\end{remark}

	Let $(\Omega, \mathcal{F}, \mathbb P, \sigma)$ be a probability space and $\sigma \colon \Omega \to \Omega$  an invertible ergodic measure-preserving transformation. Let  $T_{\omega} \colon X \to X$, $\omega \in \Omega$ be a collection of non-singular  transformations (i.e.\ $m\circ T_\omega^{-1}\ll m$ for each $\omega$) acting   on $X$.  Each transformation $T_{\omega}$ induces the corresponding transfer operator $\mathcal L_{\omega}$ acting on $L^1(X, m)$ and  defined  by the following duality relation
	\[
	\int_X(\mathcal L_{\omega} \phi)\psi \, dm=\int_X\phi(\psi \circ T_{\omega})\, dm, \quad \phi \in L^1(X, m), \ \psi \in L^\infty(X, m).
	\]
	Thus, we obtain a cocycle of transfer operators  $(\Omega, \mathcal F, \mathbb P, \sigma, L^1(X, m), \mathcal L)$ that we denote by $\mathcal L=(\mathcal L_\omega)_{\omega \in \Omega}$. For $\omega \in \Omega$ and $n\in \mathbb N$, set
	\[
	\mathcal L_\omega^n:=\mathcal L_{\sigma^{n-1} \omega} \circ \ldots \circ \mathcal L_{\sigma \omega} \circ \mathcal L_\omega.
	\]
	\begin{definition}\label{good}
		A cocycle $\mathcal L=(\mathcal L_\omega)_{\omega \in \Omega}$ of transfer operators is said to be \emph{good} if the following conditions hold:
		\begin{itemize}
			\item $\Omega$ is a Borel subset of a separable, complete metric space and $\sigma$ is a homeomorphism. Moreover, $\mathcal L$ is $\mathbb P$-continuous, i.e. $\Omega$ can be written as a countable union of measurable sets such that $\omega \mapsto 
			\mathcal L_\omega$ is continuous on each of those sets;
			\item there exists a random variable $C\colon \Omega \to (0, +\infty)$ such that $\log C\in L^1(\Omega, \mathbb P)$ and
			\[
			\|\mathcal L_\omega h\|_{BV}\le C(\omega) \|h\|_{BV}, \quad \text{for $\mathbb P$-a.e. $\omega \in \Omega$ and $h\in BV$;}
			\]
			\item there exist $N\in \N$ and random variables $\alpha^N, K^N \colon \Omega \to (0, +\infty)$ such that 
			\[
			\int_\Omega \log  \alpha^N \, d\mathbb P <0, \quad \log K^N \in L^1(\Omega, \mathbb P)
			\]
			and, for $\mathbb P$-a.e. $\omega \in \Omega$ and $h\in BV$,
			\[
			\var(\mathcal L_\omega^N h) \le \alpha^N (\omega) \var(h)+ K^N(\omega) \|h \|_1;
			\]
			\item for each $a>0$ and $\mathbb P$-a.e. $\omega \in \Omega$, there exist random numbers $n_c(\omega)<+\infty$ and $\alpha_0(\omega), \alpha_1(\omega), \ldots$ such that for every $h\in \mathcal C_a$, 
			\begin{equation}\label{y}
				\essinf_x (\mathcal L_\omega^n h)(x) \ge \alpha_n\|h \|_1 \quad \text{for $n\ge n_c$,}
			\end{equation}
			where 
			\begin{equation}\label{cones}
				\mathcal C_a:=\{ h\in L^\infty (X,m): \text{$h\ge 0$ and $\var(h) \le a\|h\|_1$} \};
			\end{equation}
			\item $\log\left(\essinf_{x\in X}(\L_\omega \mathds 1)(x)\right)\in L^1(\Omega,\mathbb P)$. 
		\end{itemize}
	\end{definition}
	
	\begin{remark}\label{Rm}
\begin{itemize}
	\item The first requirement of Definition \ref{good}, $\mathds P$-continuity of the map $\omega\in\Omega\mapsto\L_\omega$, may be seen as restrictive: this is the price to pay to apply the Multiplicative Ergodic Theorem in a non-separable Banach space such as $BV$. We stress that the $\mathds P$-continuity of the map $\omega \mapsto \L_\omega$ holds whenever the map $\omega \mapsto T_\omega$ has  a countable range $\{T_1, T_2, \ldots \}$ and for each $j$, $\{T_\omega=T_j\}\in \mathcal F$.
	
	We note that when working with a separable Banach space $\B$, as is the case in Example \ref{ex:smoothexponaverage}, we can replace this requirement with the looser \emph{strong measurability}, i.e. measurability of $\omega\in\Omega\mapsto\L_\omega h$ for $h\in\B$.
	\item Definition~\ref{good} almost coincides with~\cite[Definition 13]{DDS}, the only difference being the addition of the last requirement in Definition~\ref{good}. 
	\item This log-integrability assumption may easily be checked on explicit examples (see e.g. the discussion in \cite[Remark 2.12]{Atnip}).
	\item Furthermore, this assumption implies a certain version of the ``random covering" similar to \eqref{y}. More precisely, 
		denoting by $\C_+$ the cone of non-negative function in $L^\infty(X)$, and by $\theta_+$ the projective Hilbert metric on this cone, and assuming that the transfer operator cocycle $\L$ satisfies \\$\log(\essinf\L_\omega\mathds 1)\in L^1(\Omega,\mathbb P)$, we have, for any $h\in\C_{+}$ such that $\theta_+(h,\mathds 1)\le R$ for some $0<R<+\infty$, the following: for $\mathbb P$-a.e $\omega\in\Omega$, there exists some random integer $n_c(\omega)$, \emph{non-random} positive numbers $(\alpha_n)_{n\ge 0}$ such that for any $n\ge n_c(\omega)$,
		\[\essinf_{x\in X}\L_\omega^n h\ge \alpha_n\|h\|_1.\]
		Recall (see e.g. \cite[Sec. 1.3]{Buzzi}) that $\theta_+(h,1)=\log\left(\frac{\esssup h}{\essinf h}\right)$, so that \[\theta_+(h,1)\le R\Longleftrightarrow \essinf h\ge e^{-R}\esssup h.\]
		Since the sequence $\essinf\L_{\omega}^n\mathds 1$ is super-multiplicative\footnote{we recall that a sequence of measurable functions $(f_n)_n$ on $\Omega$ is said to be super-multiplicative if $f_{n+m}(\omega)\ge f_m(\sigma^n \omega)\cdot f_n(\omega)$ for $\mathbb P$-a.e. $\omega \in \Omega$ and $m,n \in \mathbb N$.}, and by Birkhoff's ergodic theorem, we have
		\[\frac{1}{n}\log(\essinf\L_{\omega}^n\mathds 1)\ge \frac{1}{n}\sum_{k=0}^{n-1} \log (\essinf\L_{\sigma^k\omega}\mathds 1)\underset{n\to\infty}{\longrightarrow}\int_\Omega\log(\essinf\L_\omega\mathds 1)\, d\mathbb P(\omega).\]
		In particular, there is some integer $n_c:=n_c(\omega)<+\infty$, such that for $n\ge n_c$, $\essinf\L_{\omega}^n\mathds 1\ge e^{nI/2}$, where 
\[
I:=\int_\Omega\log(\essinf\L_\omega\mathds 1)\, d\mathbb P(\omega).
\]
		Hence, for any $h\in B_{\theta_+}(\mathds 1,R)$, we have
		\[\essinf\L_{\omega}^n h\ge (\essinf h) \cdot (\essinf\L_{\omega}^n\mathds 1)\ge e^{-R}e^{nI/2}\esssup h\ge \alpha_n\|h\|_1,\]
		with $\alpha_n:=e^{-R+nI/2}>0$, which is  non-random as announced.
\end{itemize}
	\end{remark}
	
	Let us now give examples of systems satisfying our requirements: the following is essentially taken from~\cite{Buzzi}.
	\begin{example}[Lasota-Yorke cocycles]\label{ex:good}
		Consider $X=[0,1]$, endowed with Lebesgue measure $m$ and the classical notion of variation $\var$. We say that $T:X\to X$ is a piecewise monotonic non-singular map (p.m.n.s map for short) if the following conditions hold:
		\begin{itemize}
			\item T is piecewise monotonic, i.e. there exists a subdivision $0=a_0<a_1<\dots<a_N=1$ such that for each $i\in\{0,\dots,N-1\}$, the restriction $T_i=T_{|(a_i,a_{i+1})}$ is monotonic (in particular it is a homeomorphism on its image).
			\item T is non-singular, i.e. there exists $|T'|:[0,1]\to\R_+$ such that for any measurable $E\subset (a_i,a_{i+1})$, $m(T(E))=\int_E|T'|dm$.
		\end{itemize}
		The intervals $(a_i,a_{i+1})_{i\in\{0,\dots,N-1\}}$ are called the intervals of $T$. We also set $N(T):=N$ and $\lambda(T):=\essinf_{[0,1]}|T'|$.
		
		We consider a family $(T_\omega)_{\omega\in\Omega}$ of random p.m.n.s as above, and such that $T:\Omega\times [0,1]\to[0,1],~(\omega,x)\mapsto T_\omega(x)$ is measurable.
		Denoting $N_\omega=N(T_\omega)$ and $\lambda_\omega=\lambda(T_\omega)$, we assume that
		\begin{itemize}
			\item The map $\omega\mapsto\left(\var\left(\frac{1}{|T'_\omega|}\right),N_\omega,\lambda_\omega,a_1,\dots,a_{N_\omega-1}\right)$ is measurable.
			\item We have the following expanding-on-average property:
			\begin{equation*}
				\lim_{K\to \infty} \int_\Omega\log \min \left(\lambda_\omega, K\right)~d\mathbb P(\omega) \in (0, +\infty]
			\end{equation*}
			\item The map $\log^+\left(\frac{N_\omega}{\lambda_\omega}\right)$ is integrable.
			\item The map $\log^+\left(\var\left(\frac{1}{|T_\omega'|}\right)\right)$ is integrable.
			\item $T_\omega$ is covering, i.e. for any interval $I\subset[0,1]$, there exists a random number $n_c(\omega)>0$ such that for any $n\ge n_c$, one has
			\begin{equation}\label{COVER}
				\essinf_{[0,1]} \mathcal L^{n}_\omega(\mathds1_I)>0.
			\end{equation}
			\item $\log\left(\essinf_{x\in X}(\L_\omega\mathds1)(x)\right)\in L^1(\Omega,\mathbb P)$.
		\end{itemize}
		We will call a cocycle satisfying the previous assumptions an \emph{expanding on average Lasota-Yorke cocycle}. For a countably-valued measurable family $(T_\omega)_{\omega\in\Omega}$ of expanding on average Lasota-Yorke cocycle, the associated cocycle 
		of transfer operators $(\mathcal L_\omega)_{\omega \in \Omega}$ is good (see~\cite{DDS}).
	\end{example}
	The following example can be fruitfully compared to a similar one by Kifer~\cite{K1}.
	\begin{example}\label{ex:smoothexponaverage}
		We consider $X=\mathbb S^1$, endowed with the Lebesgue measure  $m$ and the notion of variation given by $\var(\phi):=\int_X |\phi'|~dm=\|\phi'\|_{L^1}$. Notice that this notion of variation leads to define the $W^{1,1}(\mathbb S^1)$ Sobolev space instead of the space of bounded variation observables $BV$. We consider a measurable map  $T:\Omega\times X\to X$ such that  $T_\omega:=T(\omega,\cdot)$ is $C^r$, $r\ge 2$. In addition, we make the following assumptions:
		\begin{itemize}
			\item The map $\omega\in\Omega\mapsto \left(\int_X\frac{|T_\omega''|}{(T_\omega')^2} dm,\lambda_\omega\right)$ is measurable, where $\lambda_\omega=\inf_{[0, 1]}|T_\omega'|$.
			\item The following expanding on average property holds:
			\begin{equation}\label{hyp:exponaverage}
				\int_\Omega \log(\lambda_\omega)~d\mathds P(\omega)>0.
			\end{equation}
			\item The map $\log\left(\int_X\frac{|T_\omega''|}{(T_\omega')^2} dm\right)$ is $\mathbb P$-integrable.
			\item $\log\left(\essinf_{x\in X}(\L_\omega\mathds1)(x)\right)\in L^1(\Omega,\mathbb P)$.
		\end{itemize} 
		We call a family $(T_\omega)_{\omega\in\Omega}$ satisfying the previous assumptions a \emph{smooth expanding on average cocycle} (see also~\cite[Example 16]{DDS}). 
		\\We note that our expansion on average condition \eqref{hyp:exponaverage} implies that $\mathbb P$-a.s, $T_\omega$ has non-vanishing derivative, hence is a local diffeomorphism and a monotonic map of the circle. Furthermore, smooth expanding on average cocycles satisfy a stronger version of the random covering property (it implies \cite[Remark 0.1]{Buzzi} the one formulated in~\eqref{COVER}): for each non-trivial interval $I\subset X$, for $\mathbb P$-a.e $\omega\in\Omega$, there is a $n_c:=n_c(\omega,I)<\infty$ such that for all $n\ge n_c$,
		\[
		T_\omega^n(I)=X.
		\]
		To see this, first remark that by smoothness of the maps $T_\omega$, one has, for any interval $I\subset X$, and any $n\in\N$, that\footnote{Here we abuse notations, identifying the Lebesgue measure on $\mathbb S^1$,  the circle map $T_\omega^n$ and the small interval $I\subset\mathbb S^1$ with their lifted counterpart on $\R$.}
		\begin{align*}
			m(T_{\omega}^n(I))=\int_I|(T_\omega^n)'|dm\ge \lambda_\omega^n|I|,
		\end{align*}
		where  $\lambda_\omega^n:=\lambda_{\sigma^{n-1}\omega}\cdots\lambda_\omega$. Our expansion on average assumption \eqref{hyp:exponaverage} and Birkhoff's ergodic theorem insures that
		\[ \frac{1}{n}\log(\lambda_\omega^n)=\frac{1}{n}\sum_{j=0}^{n-1}\log(\lambda_{\sigma^j \omega})\underset{n\to\infty}{\longrightarrow}\Lambda:=\int_\Omega\log(\lambda_\omega)d\mathbb P(\omega)>0.\]
		Hence, for a.e $\omega\in\Omega$ we may choose measurably a $\tilde n_c(\omega)\in \N$, such that for $n\ge\tilde n_c(\omega)$ one has \[m(T_\omega^n(I))\ge |I| e^{n\Lambda/2}.\]
		Taking $n_c:=1+\max\left(\tilde n_c,-\dfrac{2}{\Lambda}\log(|I|)\right)$ gives the desired result.
		\\Finally, for a measurable, possibly uncountably valued family $(T_\omega)_{\omega\in\Omega}$ of smooth expanding on average cocycle, the associated cocycle of transfer operators $(\mathcal L_\omega)_{\omega \in \Omega}$ is strongly measurable on $W^{1,1}$: this follows from \cite[Prop. 4.11]{bomfim2016}, by arguing as in \cite[Proof of Prop. 5.2]{HC} (this fact was already noted in \cite[Prop. 24]{DGS}).
		\\Hence, the cocycle of transfer operators $(\mathcal L_\omega)_{\omega \in \Omega}$ is good in the sense of Remark \ref{Rm}, replacing the $\mathds P$-continuity requirement of Definition \ref{good} by a strong measurability one. We emphasize that, since in the present setting, we work on the separable Banach space $W^{1,1}$, the range of the measurable map $\omega\in\Omega\mapsto T_\omega$ may be infinite uncountable, in contrast with the previous example.
	\end{example}

Our abstract setup also covers multidimensional examples. The one we describe now is due to Buzzi \cite[Appendix B]{Buzzi}.

		\begin{example}[Multidimensional piecewise affine maps.]
	 Recall that a polytope in $\R^d$ is defined as the intersection of half-spaces. If $X\subset\R^d$, let $P$ be a finite collection of pairwise disjoints, open polytopes $A$ of $\R^d$, such that $Y=\bigcup_{A\in P} A$ is dense in $X$. We now let $f:Y\to X$ be such that for any $A\in P$, $f:A\to f(A)\subset X$ is the restriction of an affine map $f_A$ of $\R^d$: we say that $(X,P,f)$ is a piecewise affine map. We will also assume that each $f_A$ is invertible.
		\par\noindent We define the expansion rate of $f$, \[\lambda(f):=\inf_{x\in Y}\inf_{\|v\|=1}\|Df(x)\cdot v\|.\]
		We also recall that, given a polytope $A\subset\R^d$, we can define the $\epsilon$-multiplicity of its boundary $\partial A$ at $x\in X$, $\mathrm{mult}(\partial A,\epsilon,x)$, as the number of hyperplanes in $\partial A$ having non-empty intersection with $B(x,\epsilon)$ the ball of radius $\epsilon$ centered at $x$. We then set
		\begin{align*}
		\mathrm{mult}(\partial P,\epsilon)&:=\sup_{x\in X}\sum_{\underset{A\in P}{x\in\bar A}}\mathrm{mult}(\partial A,\epsilon,x)\\
		\mathrm{mult}(\partial P)&:=\lim_{\epsilon\to 0}\mathrm{mult}(\partial A,\epsilon).
		\end{align*}
		Finally we notice that there are some $\epsilon>0$ for which $\mathrm{mult}(\partial P,\epsilon)=\mathrm{mult}(\partial P)$. We denote by $\epsilon(\partial P)$ the supremum of such $\epsilon$.
		\\Given a probability space $(\Omega,\mathcal F,\mathds P)$, endowed as usual with an invertible, measure-preserving and ergodic self map $\sigma$, we consider countably-valued, measurable families of polytopes $(A_\omega)_{\omega\in\Omega}$ and affine maps $(f_{A_\omega})_{\omega\in\Omega}$ of $X\subset\R^d$: this data defines a cocycle of random piecewise affine map $(X,P_\omega,f_\omega)$, for which we assume:
		\begin{enumerate}
			\item For any $n\in\mathbb N$, the map $\omega\mapsto(\lambda(f^n_\omega),|P^n_\omega|,\mathrm{mult}(\partial P^n_\omega),\epsilon(\partial P^n_\omega))$ is measurable.
			\item The map $\frac{|P|}{\lambda}$ is $\log^+$ $\mathds P$-integrable.
			\item The following expansion on average condition holds:
			\begin{equation*}
			\Lambda:=\lim_{n\to\infty}\lim_{K\to\infty}\int_\Omega\frac{1}{n}\log\min\left(\frac{\lambda^n_\omega}{\mathrm{mult}(P^n_\omega)},K\right)d\mathds P>0.
			\end{equation*}
			\item The following random covering condition holds: 
			\\For any ball $B\subset X$, $\mathds P$-a.e $\omega\in\Omega$ there is a $n_c:=n_c(\omega,B)$ such that $f_\omega^n(B)=X$ (modulo a null set for Lebesgue measure) for $n\ge n_c$.
			\item $\log(\essinf_{x\in X}\L_\omega\mathds 1(x)) \in L^1(\Omega,\mathds P)$. 
		\end{enumerate}
		Under the previous assumptions, and for the notion of variation on $X$ given by\footnote{This notion of variation fulfills conditions (V1)-(V9): we refer to~\cite[Section 2.2.]{DFGTV2} for details.}
		\begin{equation}\label{def:HDvar}
		\var (f)=\sup_{0<\epsilon \le \epsilon_0}\frac{1}{\epsilon^\alpha}\int_{\R^d}osc (f,  B_\epsilon (x)))\, dm(x),
		\end{equation}
		where
		\[
		osc (f, B_\epsilon (x))=\esssup_{x_1, x_2 \in B_\epsilon (x)}\lvert f(x_1)-f(x_2)\rvert.
		\]
	 it is established in \cite[Prop B.1]{Buzzi} that a random piecewise affine map has a good random transfer operator, in the sense of \cite[Def.1.1]{Buzzi}. Together with the assumption that this map is countably valued, this shows that the associated transfer operator cocycle is good in the sense of Definition \ref{good}.
	\end{example}
	
	We recall the  notion of a tempered random variable.
	\begin{definition}
		We say that a measurable map $K\colon \Omega \to (0, +\infty)$ is \emph{tempered} if
		\[
		\lim_{n\to \pm \infty} \frac 1n \log K(\sigma^n \omega)=0, \quad \text{for $\mathbb P$-a.e. $\omega \in \Omega$.}
		\]
	\end{definition}
	We will need the following classical result (see~\cite[Proposition 4.3.3.]{Arnold}).
	\begin{proposition}\label{PA}
		Let $K \colon \Omega \to (0, +\infty)$ be a tempered random variable. For each $\epsilon >0$, there exists a tempered random variable $K_\epsilon \colon \Omega \to (1, +\infty)$ such that
		\[
		\frac{1}{K_\epsilon (\omega)} \le K(\omega) \le K_\epsilon (\omega) \quad \text{and} \quad K_\epsilon(\omega)e^{-\epsilon |n|} \le K_\epsilon (\sigma^n \omega) \le K_\epsilon (\omega) e^{\epsilon |n|},
		\]
		for $\mathbb P$-a.e. $\omega \in \Omega$ and $n \in \mathbb Z$.
	\end{proposition}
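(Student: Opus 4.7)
The plan is to construct $K_\epsilon$ explicitly as a weighted supremum of $K$ along the $\sigma$-orbit of $\omega$, following the classical argument of \cite[Proposition 4.3.3]{Arnold}. I first replace $K$ by $\widetilde K := \max(K, 1/K, 1)$, which is still tempered (being the pointwise maximum of tempered functions) and satisfies $\widetilde K \ge 1$; it is enough to produce $K_\epsilon \ge \widetilde K$ with the claimed properties, since then automatically $1/K_\epsilon \le 1 \le \widetilde K \le K_\epsilon$, yielding $1/K_\epsilon \le K \le K_\epsilon$. Assuming henceforth $K \ge 1$, I set
\[
K_\epsilon(\omega) := \sup_{n \in \mathbb Z} K(\sigma^n \omega)\, e^{-\epsilon|n|}.
\]
Temperedness of $K$ guarantees that, for $\mathbb P$-a.e.\ $\omega$, the weighted sequence $K(\sigma^n\omega)e^{-\epsilon|n|}$ tends to zero as $|n|\to\infty$, so the supremum is attained and finite; choosing $n=0$ gives $K_\epsilon(\omega) \ge K(\omega) \ge 1$, hence $K_\epsilon$ takes values in $[1,+\infty)$. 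Measurability is immediate since $K_\epsilon$ is a countable supremum of measurable functions.

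The slow-variation property follows from a change of index $m=n+k$ together with the reverse triangle inequality $|m-k| \ge |m|-|k|$:
\[
K_\epsilon(\sigma^k \omega) \;=\; \sup_{m \in \mathbb Z} K(\sigma^m \omega)\, e^{-\epsilon|m-k|} \;\le\; e^{\epsilon|k|}\sup_{m \in \mathbb Z} K(\sigma^m \omega)\, e^{-\epsilon|m|} \;=\; e^{\epsilon|k|} K_\epsilon(\omega),
\]
and applying the same estimate after swapping $\omega$ with $\sigma^k\omega$ yields the symmetric bound $K_\epsilon(\omega) \le e^{\epsilon|k|} K_\epsilon(\sigma^k\omega)$, which together give the displayed two-sided estimate.

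The main obstacle is to upgrade this $\epsilon$-subexponential control to genuine temperedness of $K_\epsilon$: the estimate above only delivers $\limsup_{|k|\to\infty} |k|^{-1}\log K_\epsilon(\sigma^k\omega) \le \epsilon$, which is strictly weaker than $\lim = 0$. I would close this gap by exploiting temperedness of $K$ a second time. Fix any $\delta \in (0,\epsilon)$; for $\mathbb P$-a.e.\ $\omega$ there exists $M(\omega)<\infty$ such that $K(\sigma^m\omega) \le e^{\delta|m|}$ whenever $|m| \ge M(\omega)$. Splitting the defining supremum for $K_\epsilon(\sigma^k\omega)$ at $|m| = M(\omega)$, the contribution of $|m| \le M(\omega)$ is at most $C(\omega) e^{-\epsilon(|k|-M(\omega))}$ with $C(\omega) := \max_{|m|\le M(\omega)} K(\sigma^m\omega)$, which is negligible as $|k|\to\infty$. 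On the complementary range $|m| > M(\omega)$ one has $K(\sigma^m\omega)e^{-\epsilon|m-k|} \le e^{\delta|m|-\epsilon|m-k|}$, and a direct inspection of the piecewise linear function $m \mapsto \delta|m|-\epsilon|m-k|$ (using $\delta<\epsilon$) shows its maximum equals $\delta|k|$, attained at $m=k$. Combining these, $K_\epsilon(\sigma^k\omega) \le 2e^{\delta|k|}$ for $|k|$ large, whence $\limsup_{|k|\to\infty} |k|^{-1}\log K_\epsilon(\sigma^k\omega) \le \delta$; sending $\delta\downarrow 0$ proves that $K_\epsilon$ is tempered, completing the proof.
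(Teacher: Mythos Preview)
The paper does not actually prove this proposition: it is quoted as a classical result with a reference to \cite[Proposition~4.3.3]{Arnold}, and no argument is given. Your proof is the standard construction from that reference (which you cite yourself), and it is correct. The reduction to $K\ge 1$ via $\widetilde K=\max(K,1/K,1)$, the definition $K_\epsilon(\omega)=\sup_{n}K(\sigma^n\omega)e^{-\epsilon|n|}$, the slow-variation estimate via $|m-k|\ge |m|-|k|$, and the temperedness argument by splitting at the threshold $M(\omega)$ and optimizing $\delta|m|-\epsilon|m-k|$ are all sound. One cosmetic point: your construction yields $K_\epsilon\ge 1$ rather than $K_\epsilon>1$ as in the stated codomain $(1,+\infty)$; this is easily repaired by replacing $K_\epsilon$ with $K_\epsilon+1$, which still satisfies the two-sided growth bound since $e^{\epsilon|n|}\ge 1$.
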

	
	\subsection{Statement of the main result}
	We are now in a position to state the main result of our paper.  By $x\cdot y$ we will denote the scalar product of $x, y\in \mathbb C^d$.
	\begin{theorem}\label{TM}
		Let $\mathcal L=(\mathcal L_\omega)_{\omega \in \Omega}$ be a good cocycle of transfer operators. Moreover, take $d\in \N$ and let 
		$\psi =(\psi^1, \ldots, \psi^d) \colon \Omega \times X \to \R^d$ be a measurable map such that the following conditions hold:
		\begin{itemize}
			\item $\psi_\omega^i:=\psi^i(\omega, \cdot ) \in BV$ for $\omega \in \Omega$ and $1\le i \le d$;
			\item for $1\le i \le d$, we have that \begin{equation}\label{eq:condobservable}
				\esssup_{\omega \in \Omega} \bigg (K( \omega)  \|\psi_\omega^i\|_{BV} \bigg )<+\infty, 
			\end{equation}
			where $K\colon \Omega \to [1, +\infty)$ is a tempered random variable  given by Lemma~\ref{PRO};
			\item for $1\le i \le d$ and  $\mathbb P$-a.e. $\omega \in \Omega$,
			\begin{equation}\label{center}
				\int_X \psi_\omega^i \, d\mu_\omega=0,
			\end{equation}
			where $\mu_\omega$, $\omega \in  \Omega$ are probability measures on $X$ as in the statement of Corollary~\ref{Cor} (namely $\{\mu_\omega\}$ is the unique family of absolutely continuous equivariant measures).
		\end{itemize}
		Then, we have the following:
		\begin{enumerate}
			\item there exists a positive semi-definite $d\times d$ matrix $\Sigma^2$ such that for $\mathbb P$-a.e. $\omega \in \Omega$ we have 
			\[
			\lim_{n\to\infty}\frac 1n \int_X \big(S_n \psi (\omega, \cdot)\big)^2\, d\mu_\omega=\Sigma^2,
			\]
			where
			\begin{equation}\label{RBS}
				S_n \psi (\omega, x)=\sum_{i=0}^{n-1}\psi (\sigma^i \omega, T_\omega^i(x)), \quad (\omega, x)\in \Omega \times X.
			\end{equation}
			Moreover, $\Sigma^2$ is not positive definite if and only if there exist  $v\in\mathbb R^d\setminus \{0\}$ and an $\mathbb R$-valued measurable function $r$ on $\Omega\times X$ such that $\mathbb P$-a.s $r(\omega,\cdot)\in BV$, $\esssup_{\omega \in \Omega}\|r(\omega,\cdot)\|_{BV}<\infty$ and
			\begin{equation}\label{Cob}
				v\cdot \psi=r-r\circ \tau,\,\,\,\mu-\text{a.e,}
			\end{equation} 
			where $\tau$ and $\mu$ are given by~\eqref{SP} and~\eqref{mu}, respectively. 
			\item Suppose that $\Sigma^2$  is positive definite. Then, for $\mathbb P$-a.e. $\omega \in\Omega$ and every $\delta >0$, there exists a coupling between  $\{\psi_{\sigma^n\omega}\circ T_\omega^{n}:n\geq 0\}$, considered as a sequence of random variables on $(X,\mathcal B,\mu_\omega)$,
			and a  sequence $(Z_k)_k$ of independent centered (i.e. of zero mean) Gaussian random vectors
			such that
			\[
			\bigg{\lvert} \sum_{i=0}^{n-1}\psi (\sigma^i\omega, \cdot)\circ T_\omega^{i}-\sum_{i=1}^n Z_i \bigg{\rvert} =O(n^{1/4+\delta}),\quad\text{almost-surely}.
			\]
			Moreover, there exists a constant $C=C_\delta(\omega)>0$ so that for every $n\geq1$,
			\begin{equation*}
				\left\|\sum_{i=0}^{n-1}\psi (\sigma^i\omega, \cdot)\circ T_\omega^{i}-\sum_{i=1}^n Z_i\right\|_{L^2}\leq Cn^{1/4+\delta}.
			\end{equation*}
			Finally, there is a constant $C'=C'_\delta(\omega)>0$ so that for every unit vector $v\in \mathbb R^d$,
			\[
			\left|\left\|\sum_{i=1}^n Z_i\cdot v\right\|_{L^2}^2-\left\|\sum_{i=0}^{n-1} \psi (\sigma^i\omega, \cdot)\circ T_\omega^{i}\cdot v\right\|_{L^2}^2\right|\leq C'n^{1/2+\delta}.
			\]
		\end{enumerate}
	\end{theorem}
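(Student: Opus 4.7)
The plan is to deduce the theorem from the adapted Gou\"ezel-type ASIP criterion of \cite[Theorem 2.1]{DH3} by first constructing a family of \emph{adapted norms} that turns the nonuniform decay of correlations of $\mathcal L$ into a uniform exponential decay on centered observables. This parallels the strategy of \cite{DDS}, but crucially we apply it to the \emph{normalized} transfer operator cocycle rather than to $\mathcal L$ itself; the scaling condition~\eqref{eq:condobservable} is then exactly what is needed to ensure that our observables sit boundedly in the resulting spaces.

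Concretely, let $h_\omega \in BV$ denote the density of $\mu_\omega$ with respect to $m$ (from Corollary~\ref{Cor}), and define the normalized cocycle
\[
\widetilde{\mathcal L}_\omega \phi = \frac{1}{h_{\sigma \omega}}\, \mathcal L_\omega(h_\omega \phi),
\]
so that $\widetilde{\mathcal L}_\omega \mathds{1} = \mathds{1}$ and the subspace $BV_0^\omega$ of $\mu_\omega$-centered $BV$ functions is preserved. Mimicking \cite[Section 3.1]{DDS} but now for $\widetilde{\mathcal L}$, I would build a measurable family of norms $\|\cdot\|_\omega$ on $BV$, equivalent to $\|\cdot\|_{BV}$ via a tempered random variable comparable with the function $K(\omega)$ from~\eqref{eq:condobservable}, together with a \emph{deterministic} contraction rate $\lambda\in(0,1)$ such that
\[
\|\widetilde{\mathcal L}_\omega \phi\|_{\sigma\omega}\le \lambda \|\phi\|_\omega \qquad\text{for every } \phi \in BV_0^\omega.
\]
A natural candidate is $\|\phi\|_\omega := \sup_{n \ge 0} \lambda_0^{-n} \|\widetilde{\mathcal L}_\omega^n \phi\|_{BV}$ with $\lambda_0$ slightly larger than the random asymptotic contraction rate. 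The new log-integrability hypothesis $\log \essinf \mathcal L_\omega \mathds 1 \in L^1(\Omega,\mathbb P)$, combined with the random covering argument sketched in Remark~\ref{Rm}, is what forces a deterministic asymptotic contraction for the normalized cocycle; this is precisely why the construction of \cite{DDS} cannot be imported verbatim and why a slightly smaller class of cocycles is needed.

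With the adapted norms in hand, the scaling condition~\eqref{eq:condobservable} yields $\esssup_\omega \|\psi_\omega^i\|_\omega <+\infty$, so each component of $\psi$ behaves as a centered observable over a system with uniform exponential decay of correlations. From this it is routine to obtain an exponential bound on $|\Cov_{\mu_\omega}(\psi_{\sigma^k\omega}^i\circ T_\omega^k,\psi_{\sigma^\ell\omega}^j\circ T_\omega^\ell)|$ in $|k-\ell|$, giving the existence of the deterministic limit $\Sigma^2$ in part (1). The coboundary characterization then follows by a standard Gordin-type construction: if $v^\top \Sigma^2 v=0$, the partial sums $v\cdot S_n\psi$ are bounded in $L^2$, and one defines $r(\omega,\cdot)$ via a telescoping series of iterated normalized operators applied to $v\cdot \psi$, whose summability and the required uniform $BV$-bound are immediate from the adapted-norm contraction; the reverse implication is an immediate telescoping argument. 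Finally, conclusion (2)---the ASIP with rate $n^{1/4+\delta}$ together with the variance-comparison estimate---follows by verifying the hypotheses of \cite[Theorem 2.1]{DH3}, namely the uniform decay, summability of covariances, and the block-decoupling estimate on characteristic functions; each of these reduces to standard adapted-norm estimates once uniform exponential decay is in place, so no additional probabilistic work is required. The main obstacle will be the adapted-norm construction for the normalized cocycle: balancing the log-integrability of $\essinf \mathcal L_\omega \mathds{1}$ against the random expansion rates to obtain a deterministic contraction rate, while preserving the tempered equivalence with $\|\cdot\|_{BV}$, is exactly the delicate step warned about in Remark~\ref{Rm}.
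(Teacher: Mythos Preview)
Your overall plan matches the paper's: adapted norms are built for the normalized cocycle $L_\omega\phi=\mathcal L_\omega(h_\omega\phi)/h_{\sigma\omega}$ (Lemma~\ref{PRO} uses essentially your candidate $\sup_{n\ge 0}\lambda_0^{-n}\|L_\omega^n\tilde\Pi(\omega)\phi\|_{BV}$, augmented by the term $|\int\phi\,d\mu_\omega|$), the log-integrability hypothesis is precisely what makes $\|1/h_\omega\|_{BV}$ tempered so that the normalized cocycle inherits a tempered-coefficient exponential decay (Corollary~\ref{Cor}), and the scaling condition~\eqref{eq:condobservable} is exactly $\esssup_\omega\|\psi_\omega^i\|_\omega<\infty$. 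The variance formula and the coboundary characterization then go essentially as you outline (the paper's Lemma~\ref{BVcob} gives a slightly more indirect argument for the $BV$ regularity of $r$, but your telescoping-series sketch is in the right spirit).

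There is, however, a real gap in your plan for part~(2). The block-decoupling hypothesis of \cite[Theorem~2.1]{DH3} concerns characteristic functions and therefore products of \emph{twisted} operators $L_\omega^{it_j,\,b_{j+1}-b_j}$ with $t_j\neq 0$, not iterates of $L_\omega$ on centered functions; your adapted-norm contraction says nothing about those products directly. The paper devotes an entire subsection to the analytic perturbation apparatus needed here: it shows that $\theta\mapsto L_\omega^\theta$ is analytic in the adapted-norm operator topology, runs an implicit function theorem on an auxiliary space $\mathcal S_0$ to produce analytic families $\lambda_\omega^\theta$, $v_\omega^\theta$, $\phi_\omega^\theta$ (Lemma~\ref{L} and its dual), proves uniform exponential decay of $L_\omega^{\theta,n}(h-\phi_\omega^\theta(h)v_\omega^\theta)$ (Lemma~\ref{20}), and finally establishes the uniform bound $\bigl|\prod_j\lambda_{\sigma^j\omega}^{it}\bigr|\le 2$ in $n,\omega$ and small $|t|$ (Lemma~\ref{new}). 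Only after all this does the block-decoupling estimate follow by a one-line splitting. This machinery is not ``standard adapted-norm estimates once uniform exponential decay is in place''; it is the main analytic input beyond the construction of $\|\cdot\|_\omega$, and your proposal needs to account for it.
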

	
	\begin{remark}
		Let us comment on the statement of Theorem~\ref{TM}:
		\begin{itemize}
		\item Reasoning as in~\cite[Remark 34]{DDS}, it is easily seen that in the setting of~\cite{DFGTV1}, $K$ is constant. Hence, \eqref{eq:condobservable} is equivalent to
		\begin{equation}\label{aaab}
		\esssup_{\omega \in \Omega}   \|\psi_\omega^i\|_{BV} <+\infty, \quad 1\le i \le d.
		\end{equation}
		Therefore, in the setting of~\cite{DFGTV1}, Theorem~\ref{TM} reduces to~\cite[Theorem 4.18]{DH3}.
		\item It is possible to construct observables satisfying assumption \eqref{eq:condobservable} by following, for each scalar map $\psi_\omega^i,~i\in\{1,\dots,d\},$ the procedure described in \cite[Example 35]{DDS}.
		\end{itemize}
\end{remark}

\begin{remark}
		 In \cite{DH1}  a version of Theorem \ref{TM} for the random expanding maps from \cite[Ch. 5]{HK} (see also \cite{MSU}) was established. While this was obtained only for H\"older continuous observables, we stress that  these maps are not absolutely continuous with respect to a given reference measure, and so the setup of \cite{DH1} is not included in the setup of the present paper. As discussed in Section \ref{Sec1}, the results from~\cite{DH1} were obtained by passing to an induced system which exhibits uniform decay of correlations.
		Thus, a completely different type of assumptions on the observables were needed. However, we believe that the arguments in the present paper also yield Theorem \ref{TM} in the setup of \cite{DH1}. The main obstacle is to establish \eqref{est11} and \eqref{est22} with a tempered random variable $\tilde D(\omega)$ (and not just with a one which is finite a.e.). Under appropriate log integrability conditions, when the maps $T_\omega$ in \cite{DH1} act on the same space this can be achieved by an application of Oseledets theorem, while in the case of maps $T_\omega:\mathcal E_\omega\to\mathcal E_{\sigma\omega}$ between random measurable spaces $\{\mathcal E_\omega\}$,  this  can be achieved by using a recent version of Oseledets theorem for Banach fields  established in~\cite{VR}\footnote{To apply the latter, it seems that we need to impose  stronger integrability assumptions than~\cite{DH1}. More precisely, we believe that the  random variable $Q_\omega$ defined in \cite[(2.16)]{MSU} needs to be integrable}.

	\end{remark}

	\subsection{Proof of Theorem~\ref{TM}: behaviour of the cocycle of normalized transfer operators}
	\begin{theorem}\label{T}
		Let $\mathcal L=(\mathcal L_\omega)_{\omega \in \Omega}$ be a good cocycle of transfer operators. Then, the following holds:
		\begin{itemize}
			\item there exists an essentially unique measurable family $(v_\omega^0)_{\omega \in \Omega}\subset BV$ such that $v_\omega \ge 0$, $\int_X v_\omega^0 \, dm=1$ and
			\[
			\mathcal L_\omega v_\omega^0=v_{\sigma \omega}^0, \quad \text{for $\mathbb P$-a.e. $\omega \in \Omega$;}
			\]
			\item there is a random variable $\ell:\Omega\to(0,+\infty)$ such that for $\mathbb P$-a.e. $\omega \in \Omega$, 
			\begin{equation}\label{ell}
				v_\omega^0 \ge \ell(\omega) \quad \text{$m$-a.e.;}
			\end{equation}
			\item for $\mathbb P$-a.e. $\omega \in \Omega$, 
			\begin{equation}\label{split}
				BV=span\{v_\omega^0\} \oplus BV^0,
			\end{equation}
			where \[ BV^0=\bigg \{ h\in BV: \int_X h\, dm=0 \bigg \}; \]
			\item $\omega \mapsto \|v_\omega^0\|_{BV}$ is tempered;
			\item there exist $\lambda >0$ and  for each $\epsilon >0$, a  tempered random variable $D=D_\epsilon \colon \Omega \to (0, +\infty)$ such that for $\mathbb P$-a.e. $\omega \in \Omega$ and $n\in \mathbb N$,
			\begin{equation}\label{est1x}
				\| \mathcal L_\omega^n \Pi(\omega) \|_{BV} \le D(\omega)e^{-\lambda n}
			\end{equation}
			and 
			\begin{equation}\label{est2x}
				\| \mathcal L_\omega^n(\Id- \Pi(\omega)) \|_{BV} \le D(\omega)e^{\epsilon n},
			\end{equation}
			where $\Pi(\omega) \colon BV \to BV^0$ is a projection associated to the splitting~\eqref{split}.
		\end{itemize}
	\end{theorem}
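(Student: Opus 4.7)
The argument runs along the lines of~\cite{DDS,Buzzi}, the role of the new log-integrability hypothesis $\log(\essinf \mathcal L_\omega \mathds 1)\in L^1(\Omega,\mathbb P)$ being to upgrade certain $\mathbb P$-a.s.\ finite random pre-factors that appear in those works into genuinely tempered ones. First I would produce the equivariant density family by iterating the Lasota--Yorke inequality of Definition~\ref{good} on $\mathds 1$ and extracting an $L^1$-limit via the compactness property (V4), selected measurably in $\omega$; uniqueness is standard and follows from the random covering~\eqref{y}, since two equivariant densities would have a mean-zero difference that must be identically $0$ after enough iterations along the cocycle.

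For the pointwise lower bound~\eqref{ell}, I would iterate the LY inequality backwards in time and use $\int_\Omega \log\alpha^N\,d\mathbb P<0$ together with Birkhoff's theorem to get an a.s.\ finite bound $\var(v_\omega^0)\le a(\omega)$, so that $v_\omega^0\in\mathcal C_{a(\omega)}$ in the sense of~\eqref{cones}; the covering bound applied to $v_\omega^0=\mathcal L_{\sigma^{-n_c}\omega}^{n_c}v_{\sigma^{-n_c}\omega}^0$ then furnishes $\ell(\omega)>0$. The splitting~\eqref{split} is automatic via $\Pi(\omega)h:=h-\bigl(\int_X h\,dm\bigr)v_\omega^0$. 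Temperedness of $\omega\mapsto\|v_\omega^0\|_{BV}$ follows from the comparison $\|v_{\sigma\omega}^0\|_{BV}\le C(\omega)\|v_\omega^0\|_{BV}$, the trivial lower bound $\|v_\omega^0\|_{BV}\ge\|v_\omega^0\|_1=1$, and Birkhoff applied to $\log C\in L^1$. Consequently~\eqref{est2x} is essentially cosmetic: $\mathcal L_\omega^n(\Id-\Pi(\omega))h=\bigl(\int h\,dm\bigr)v_{\sigma^n\omega}^0$ has operator norm bounded by $\|v_{\sigma^n\omega}^0\|_{BV}$, to which Proposition~\ref{PA} applies to yield a tempered pre-factor and a loss $e^{\epsilon n}$.

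The hard step, and the one forcing the slight restriction on the class of cocycles relative to~\cite{DDS}, is the exponential decay~\eqref{est1x} with a \emph{tempered} constant $D_\epsilon$. The plan is to combine the backward iteration of the LY inequality with a Hilbert-metric contraction of the cone of non-negative densities induced by the covering property, as in~\cite[Section~3]{DDS}: this yields a deterministic rate $\lambda>0$ and an a.s.\ finite pre-factor $\tilde D(\omega)$ controlling the decay on $BV^0$. Upgrading $\tilde D$ to a tempered random variable is the crux of the argument, and it is precisely where the additional log-integrability of $\essinf\mathcal L_\omega\mathds 1$ enters. As spelled out in Remark~\ref{Rm}, this hypothesis provides effectively \emph{non-random} positivity rates $\alpha_n$ for densities sufficiently close to $\mathds 1$ in the Hilbert projective metric; combining this via super-multiplicativity and Birkhoff's theorem with Proposition~\ref{PA} then lets one absorb the measurable-but-not-tempered part of $\tilde D$ into a tempered factor $D_\epsilon$ satisfying $D_\epsilon(\sigma^n\omega)\le D_\epsilon(\omega)e^{\epsilon|n|}$, as required.
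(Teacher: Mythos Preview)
You have misidentified where the new log-integrability hypothesis enters. The exponential decay~\eqref{est1x} with a \emph{tempered} pre-factor $D_\epsilon$, together with~\eqref{est2x}, is not new here: both are quoted directly from~\cite[Proposition~28]{DDS}, which holds for the cocycle $(\mathcal L_\omega)_{\omega\in\Omega}$ under the assumptions of~\cite[Definition~13]{DDS} alone, i.e.\ \emph{without} the additional requirement $\log(\essinf \mathcal L_\omega\mathds 1)\in L^1$. Likewise, the temperedness of $\omega\mapsto\|v_\omega^0\|_{BV}$ is quoted from~\cite[Proposition~23]{DDS}. So your ``hard step'' is in fact already settled in the literature, and the Hilbert-metric upgrade you sketch for it is unnecessary.

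The only assertion actually proved from scratch in the paper is the second one, the lower bound~\eqref{ell}, and \emph{this} is where the log-integrability is used. Your outline for~\eqref{ell} has a gap: the covering bound~\eqref{y} is stated for $h\in\mathcal C_a$ with $a$ \emph{fixed}, so knowing $v_\omega^0\in\mathcal C_{a(\omega)}$ for a random $a(\omega)$ is not enough, and the formula $v_\omega^0=\mathcal L_{\sigma^{-n_c}\omega}^{n_c}v_{\sigma^{-n_c}\omega}^0$ with $n_c=n_c(\sigma^{-n_c}\omega)$ is circular. The paper's argument instead exhibits a set $\Omega_+^1$ of \emph{positive} (not full) measure on which $v_\omega^0$ lies in a fixed cone $\mathcal C_a$ and hence $\essinf v_\omega^0\ge\alpha_*$ uniformly; to pass from positive to full measure one writes $v_\omega^0=\mathcal L_{\sigma^{-\tilde n_\omega}\omega}^{\tilde n_\omega}v_{\sigma^{-\tilde n_\omega}\omega}^0$ where $\tilde n_\omega$ is the first backward hitting time of $\Omega_+^1$, and bounds $\essinf v_\omega^0\ge\alpha_*\cdot\essinf\mathcal L_{\sigma^{-\tilde n_\omega}\omega}^{\tilde n_\omega}\mathds 1$. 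The positivity of the last factor for every $\omega$ is exactly what the log-integrability of $\essinf\mathcal L_\omega\mathds 1$ guarantees (via super-multiplicativity). The role of~\eqref{ell} downstream is to yield temperedness of $\omega\mapsto\|1/v_\omega^0\|_{BV}$ in Corollary~\ref{Cor}, which is what allows the adapted norms for the \emph{normalized} cocycle $(L_\omega)_{\omega\in\Omega}$; this is the sense in which the restriction relative to~\cite{DDS} is needed (cf.\ Remark~\ref{norms}), not for~\eqref{est1x} itself.
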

	
	\begin{proof}
		The first assertion of the theorem is established in~\cite{Buzzi}, while the third assertion is proved in~\cite[Proposition 24]{DDS}. Moreover, the last two statements of the theorem follow from~\cite[Proposition 23]{DDS} and~\cite[Proposition 28]{DDS} respectively. 
		
		Thus, it only remains to establish the second assertion of the theorem: first, we remark that when $\omega$ is good\footnote{we note that several parameters $a, R, B_*, \alpha_*$ associated with this notion  will be used in the sequel (where $\varepsilon$ from \cite[Definition 2.4]{Buzzi} is a sufficiently small fixed number)} in the sense of \cite[Definition 2.4]{Buzzi}, one has $v_\omega^0\in \mathcal C_a$ for some $a>0$, where $\mathcal C_a$ is given by~\eqref{cones}. Indeed, we may write, by \cite[Lemma 2.1]{Buzzi}	
		\begin{equation*}
			\var(\L_{\sigma^{-n}\omega}^n \mathds1)\le C_0(\omega)\var(\mathds 1)+C_0(\omega)\int_X \mathds1 dm=C_0(\omega),
		\end{equation*}	
		where $C_0$ is some a.e finite function. Moreover, by~\eqref{est1x}  and Proposition~\ref{PA} we have that \[ \|\L_{\sigma^{-n}\omega}^n \mathds 1-v_{\omega}^0\|_{BV}\le D(\sigma^{-n} \omega)e^{-\lambda n}\le D_{\lambda/2}(\omega)e^{-\frac{\lambda}{2} n}. \]  Taking the limit as $n\to\infty$, we obtain that \[\var(v_\omega^0)\le C_0(\omega)\le B_\ast,\] if $\omega$ is good. Since $a\ge 6B_\ast$, we obtain that $v_\omega^0\in \mathcal C_{a/6}\subset \mathcal C_a$ for good $\omega$.
		In particular, we get that
		\[\essinf v_{\sigma^R\omega}^0=\essinf\L_\omega^Rv_\omega^0\ge \alpha_\ast. \]
		Hence, for every $\omega\in\sigma^{-R}(\Omega_\ast)=:\Omega^1_+$, where $\Omega_\ast$ is the set of good parameters, $\essinf v_\omega^0\ge \alpha_\ast$. We note that $\mathbb P(\Omega^1_+)=1-\frac{\epsilon}{4}>0$ by \cite[Lemma 2.6]{Buzzi} and the measure preserving property of $\sigma$. 
		\\Let us consider now the set $\Omega^2_+:=\{\omega\in\Omega: \essinf\L_\omega \mathds1>0\}$. 
		Our $\log$-integrability assumption on $\essinf\L_\omega \mathds1$ entails that this set has full measure, and up to replacing it by $\bigcap_{k\in\mathbb Z}\sigma^k(\Omega^2_+)$, we can assume that it is $\sigma$-invariant. Hence, for a.e $\omega\in\Omega$ and $n\in \N$ we have that \[ \essinf\L^n_{\sigma^{-n}\omega}\mathds1\ge\essinf\L_{\sigma^{-1}\omega}\mathds1\cdots\essinf\L_{\sigma^{-n}\omega}\mathds1>0.\]
		We may now introduce the first hitting time of the positive measure set $\Omega_+:=\Omega^1_+\cap\Omega^2_+$, i.e. we set, for $\omega\in\Omega$
		\[\tilde n_\omega:=\inf\{n\in\N: \sigma^{-n}\omega \in\Omega_+\}.\]
		Therefore, we have that 
		\[
		\essinf v_{\omega}^0= \essinf \L_{\sigma^{-\tilde n_\omega}\omega}^{\tilde n_\omega}v_{\sigma^{-\tilde n_\omega}\omega}\ge\alpha_\ast \cdot \essinf \L_{\sigma^{-\tilde n_\omega}\omega}^{\tilde n_\omega}\mathds1>0
		\]
		Hence, \eqref{ell} holds with 
		\[ \ell(\omega):= \alpha_\ast \cdot \essinf\L_{\sigma^{-\tilde n_\omega}\omega}^{\tilde n_\omega}\mathds1>0, \quad \omega \in \Omega. \]
	\end{proof}
	
	\begin{remark}\label{newrmk}
	Using the language of the multiplicative ergodic theory (see~\cite[Section 2]{DDS}), the estimates~\eqref{est1x} and~\eqref{est2x} mean that for the cocycle $\mathcal L=(\mathcal L_\omega)_{\omega \in \Omega}$,  the separation between the Oseledets subspace corresponding to the largest Lyapunov exponent (which is zero)   and the sum of  Oseledets subspaces corresponding to all other Lyapunov exponents is measured by a tempered random variable. We stress that this is a general fact that holds for arbitrary cocycles (of not necessarily transfer operators). We refer to~\cite[Propostion 3.2]{BD} for  a precise formulation.
	
	\end{remark}
	
	\begin{cor}\label{Cor}
		Let $\mathcal L=(\mathcal L_\omega)_{\omega \in \Omega}$ be a good cocycle of transfer operators. 
		
		Then, the following holds:
		\begin{itemize}
			\item If $(v_\omega^0)_{\omega \in \Omega}\subset BV$  is given by Theorem~\ref{T}, then 
			\begin{equation}\label{a}
				\omega \mapsto \| 1/v_\omega^0\|_{BV} \  \text{is tempered.}
			\end{equation}
			\item For $\mathbb P$-a.e. $\omega \in \Omega$, 
			\begin{equation}\label{split2}
				BV=span\{\mathds 1\} \oplus BV_\omega^0,
			\end{equation}
			where 
			\[
			BV_\omega^0=\bigg \{ h\in BV: \int_X h\, d\mu_\omega=0 \bigg \},
			\]
			and $d\mu_\omega=v_\omega^0 dm$, $\omega \in \Omega$;
			\item  there exist $\lambda' >0$ and  a  tempered random variable $\tilde D\colon \Omega \to (0, +\infty)$ such that for $\mathbb P$-a.e. $\omega \in \Omega$ and $n\in \mathbb N$,
			\begin{equation}\label{est11}
				\| L_\omega^n \tilde \Pi(\omega) \|_{BV} \le \tilde  D(\omega)e^{-\lambda' n}
			\end{equation}
			\begin{equation}\label{est22}
				\| L_\omega^n(\Id- \tilde \Pi(\omega)) \|_{BV} \le \tilde  D(\omega),
			\end{equation}
			where $\tilde \Pi(\omega) \colon BV \to BV_\omega^0$ is a projection associated to the splitting~\eqref{split2}, and 
			\[
			L_\omega^n h =\mathcal L_\omega^n(hv_\omega^0) /v_{\sigma^n \omega}^0, \quad h\in BV, \ n\in \N.
			\]
		\end{itemize}
	\end{cor}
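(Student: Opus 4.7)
The corollary consists of three claims, which I would treat in turn.

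\emph{First claim.} Combining (V7) with $v_\omega^0\ge \ell(\omega)$ from Theorem~\ref{T} yields
\[
\|1/v_\omega^0\|_{BV} \le \frac{1}{\ell(\omega)} + \frac{\|v_\omega^0\|_{BV}}{\ell(\omega)^2}.
\]
Since $\|v_\omega^0\|_{BV}$ is already known to be tempered, it is enough to prove that $1/\ell$ is tempered. Recalling $\ell(\omega) = \alpha_\ast\,\essinf\L_{\sigma^{-\tilde n_\omega}\omega}^{\tilde n_\omega}\mathds 1$ and the super-multiplicative lower bound $\essinf\L_\omega^m\mathds 1 \ge \prod_{k=0}^{m-1}\essinf\L_{\sigma^k\omega}\mathds 1$, one obtains
\[
\log\ell(\sigma^n\omega) \ge \log\alpha_\ast + S_n(\omega) - S_{n-\tilde n_{\sigma^n\omega}}(\omega),
\]
where $S_n(\omega) := \sum_{j=0}^{n-1}\log\essinf\L_{\sigma^j\omega}\mathds 1$. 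Two applications of Birkhoff then finish: applied to $\mathds 1_{\Omega_+}$ (a set of positive measure) it gives $\tilde n_{\sigma^n\omega} = o(n)$ almost surely, while applied to $\log\essinf\L_\omega\mathds 1 \in L^1$ it forces $n^{-1}(S_n(\omega)-S_{n-\tilde n_{\sigma^n\omega}}(\omega)) \to 0$. Combined with the trivial upper bound $\ell \le 1$, this proves $n^{-1}\log \ell(\sigma^n\omega) \to 0$.

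\emph{Second claim.} This is essentially algebraic: any $h \in BV$ admits the decomposition $h = (\int h\, d\mu_\omega)\mathds 1 + (h - (\int h\, d\mu_\omega)\mathds 1)$, giving the announced direct sum; the associated projection has norm at most $|\int h\, d\mu_\omega|\le \|h\|_\infty \le C_{var}\|h\|_{BV}$ by (V3) and $\int v_\omega^0\, dm = 1$.

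\emph{Third claim.} A direct computation yields $L_\omega\mathds 1 = \mathds 1$ and $\int L_\omega h\, d\mu_{\sigma\omega} = \int h\, d\mu_\omega$, whence $L_\omega\tilde\Pi(\omega) = \tilde\Pi(\sigma\omega)L_\omega$. On $\Ker\tilde\Pi(\omega) = \spn\{\mathds 1\}$, $L_\omega^n$ acts as the identity, immediately giving \eqref{est22}. For \eqref{est11}, let $h = \tilde\Pi(\omega)g$; then $\int hv_\omega^0\, dm = 0$, so $hv_\omega^0 \in BV^0$, hence $\Pi(\omega)(hv_\omega^0) = hv_\omega^0$. Combining \eqref{est1x} with \eqref{mc} yields
\[
\|L_\omega^n h\|_{BV} = \|\L_\omega^n(hv_\omega^0)/v_{\sigma^n\omega}^0\|_{BV} \le C_{var}^2\,D(\omega)e^{-\lambda n}\|h\|_{BV}\|v_\omega^0\|_{BV}\|1/v_{\sigma^n\omega}^0\|_{BV}.
\]
By Proposition~\ref{PA} applied to the tempered variables $D$, $\|v_\omega^0\|_{BV}$ and $\|1/v_\omega^0\|_{BV}$ with some $0<\epsilon<\lambda/3$, all growth factors are absorbed into a bound of the form $\tilde D(\omega)e^{-\lambda' n}$ with $\lambda' = \lambda-3\epsilon > 0$, as desired.

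The main obstacle is the first claim: the lower bound on $v_\omega^0$ is only controlled by the a.e.~finite (not obviously tempered) function $\ell$, and extracting temperedness of $1/\ell$ requires the delicate bookkeeping of the hitting time $\tilde n_{\sigma^n\omega}$, handled by Birkhoff applied simultaneously to $\mathds 1_{\Omega_+}$ and to $\log\essinf\L_\omega\mathds 1$.
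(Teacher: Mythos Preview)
Your arguments for the second and third claims coincide with the paper's, up to cosmetic differences: in the third claim the paper applies Proposition~\ref{PA} only to $\|1/v_\omega^0\|_{BV}$ with $\epsilon=\lambda/2$ (since $D(\omega)$ and $\|v_\omega^0\|_{BV}$ sit at $\omega$ and carry no $n$-dependence), obtaining $\lambda'=\lambda/2$; your $\lambda'=\lambda-3\epsilon$ works but is unnecessarily wasteful.

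For the first claim you take a genuinely different route. You unpack the explicit formula $\ell(\omega)=\alpha_\ast\,\essinf\L_{\sigma^{-\tilde n_\omega}\omega}^{\tilde n_\omega}\mathds 1$ from the \emph{proof} of Theorem~\ref{T} and control the hitting time $\tilde n_{\sigma^n\omega}$ via two applications of Birkhoff (to $\mathds 1_{\Omega_+}$ and to $\log\essinf\L_\omega\mathds 1$). The paper instead argues directly with $\essinf v_\omega^0$: from equivariance $v_{\sigma^n\omega}^0=\L_\omega^n v_\omega^0$ one gets
\[
\tfrac{1}{n}\log\essinf v_{\sigma^n\omega}^0 \;\ge\; \tfrac{1}{n}\log\essinf v_\omega^0 \;+\; \tfrac{1}{n}\log\essinf\L_\omega^n\mathds 1,
\]
whose $\liminf$ is finite by super-multiplicativity and Birkhoff; Tanny's dichotomy then forces the limit to be~$0$. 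The paper's argument is shorter and does not depend on how $\ell$ was constructed, at the price of invoking Tanny; yours is more elementary (no Tanny) but leans on the internals of the previous proof. Two small points to close in your version: showing $1/\ell$ tempered only gives $\limsup\frac{1}{n}\log\|1/v_{\sigma^n\omega}^0\|_{BV}\le 0$, so you should also note $\|1/v_\omega^0\|_{BV}\ge\int_X 1/v_\omega^0\,dm\ge 1$ (Jensen, since $\int v_\omega^0\,dm=1$) for the $\liminf$; and the $n\to-\infty$ direction of temperedness, while routine, is not addressed.
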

	
	\begin{proof}
		We first establish $\eqref{a}$. Given that
		\begin{eqnarray}
			\frac{\var(v_\omega^0)}{\esssup(v_\omega^0)^2}\le\var\left(\frac{1}{v_\omega^0}\right)\le \frac{\var(v_\omega^0)}{\essinf(v_\omega^0)^2},
		\end{eqnarray}
		it is enough to show that $\essinf(v_\omega^0)$ is tempered. Indeed, $\var(v_\omega^0)$ is tempered by Theorem \ref{T}, which implies that $\esssup v_\omega^0$ is by (V3). We have, thanks to $v_{\sigma^n\omega}^0=\L_\omega^nv_\omega^0$:
		\[\frac{1}{n}\log(\essinf v_{\sigma^n\omega}^0)\ge \frac{1}{n}\log(\essinf v_\omega^0)+\frac{1}{n}\log(\essinf\L_{\omega}^n \mathds1).\]
		By \eqref{ell}, the first term in the R.H.S. above goes to $0$ as $n\to\infty$, and for the second term, we notice that by the last item of Definition \ref{good}, super--multiplicativity of the sequence $(\essinf \L_\omega^n 1)_{n\ge 0}$ and Birkhoff's ergodic theorem, one has
		\[\frac{1}{n}\log(\essinf \L_\omega^n \mathds1)\ge \frac{1}{n}\sum_{k=0}^{n-1}\log\left(\essinf \L_{\sigma^k\omega} \mathds1\right)\underset{n\to\infty}{\longrightarrow}\int_\Omega\log(\essinf\L_\omega\mathds1) d\mathbb P(\omega),\]
		for $\mathbb P$-a.e. $\omega \in  \Omega$. 
		Hence it must follow that
		\[
		\begin{split}
			\liminf_{n\to\infty}\frac{1}{n}\log(\essinf v_{\sigma^n\omega}^0)&\ge \liminf_{n\to\infty}\frac{1}{n}\log(\essinf \L_\omega^n \mathds1) \\
			&\ge \int_\Omega\log(\essinf\L_\omega \mathds1) d\mathbb P(\omega).
		\end{split}
		\]
		In particular, this last bound is finite: by Tanny's theorem\footnote{Note that we use a version of Tanny's theorem for non-positive functions, whereas the ``standard" version concerns non-negative ones.} \cite[Theorem C.1]{GTQ}, this entails that $\lim_{n\to\infty}\frac{1}{n}\log(\essinf v_{\sigma^n\omega}^0)=0$, i.e. that  $\essinf(v_\omega^0)$ is tempered. Note that in the application of Tanny's theorem we have used that $\essinf v_\omega^0\leq 1$, which holds since  $\int_X v_\omega^0dm(\omega)=1$ and $v_\omega^0\geq0$.
		
		Next, we observe that for each $h\in BV$ and $\mathbb P$-a.e. $\omega \in \Omega$, 
		\[
		h=\bigg ( \int_X h\, d\mu_\omega \bigg)\mathds1+ \bigg (h-\bigg ( \int_X h\, d\mu_\omega \bigg)\mathds1 \bigg )\in span \{\mathds1\} +BV_\omega^0.
		\]
		On the other hand, clearly we have that $span \{\mathds1\} \cap BV_\omega^0=\{ 0\}$. Thus, \eqref{split2} holds for $\mathbb P$-a.e. $\omega \in \Omega$. Moreover,
		\begin{equation}\label{Pi}
			\tilde \Pi(\omega)h=h-\bigg ( \int_X h\, d\mu_\omega \bigg)\mathds1, \quad  \text{for $\mathbb P$-a.e. $\omega \in \Omega$ and $h\in BV$.}
		\end{equation}
		Since $\omega \mapsto \|v_\omega^0 \|_{BV}$ is tempered, it follows that $\omega \mapsto \|\tilde \Pi(\omega) \|_{BV}$ is tempered.
		
		Take an arbitrary $\epsilon >0$ and let $\lambda >0$ and $D=D_{\epsilon} \colon \Omega \to (0, +\infty)$ be given by Theorem~\ref{T}. Since $\omega \mapsto \|1/v_\omega^0\|_{BV}$ is tempered, by Proposition~\ref{PA} there exists a tempered random variable 
		$K\colon \Omega \to (0, +\infty)$ such that 
		\begin{equation}\label{9}
			\| 1/v_\omega^0 \|_{BV} \le K(\omega) \quad \text{and} \quad K(\omega)e^{-\lambda/2 |n|} \le K (\sigma^n \omega) \le K (\omega) e^{\lambda/2 |n|},
		\end{equation}
		for $\mathbb P$-a.e. $\omega \in \Omega$ and $n\in \Z$.
		By~\eqref{mc}, \eqref{est1x} and~\eqref{9}, it follows that
		\[
		\begin{split}
			\|L_\omega^n h\|_{BV}&=\| \mathcal L_\omega^n(hv_\omega^0) /v_{\sigma^n \omega}^0\|_{BV} \\
			&\le C_{var} \| \mathcal L_\omega^n(hv_\omega^0)\|_{BV} \cdot \| 1/v_{\sigma^n \omega}^0\|_{BV} \\
			&\le C_{var} D(\omega)e^{-\lambda n} \| hv_\omega^0\|_{BV}\cdot \| 1/v_{\sigma^n \omega}^0\|_{BV} \\
			&\le C_{var}^2  D(\omega)e^{-\lambda n} \| h\|_{BV} \cdot \|v_\omega^0 \|_{BV} \cdot \| 1/v_{\sigma^n \omega}^0\|_{BV} \\
			&\le C_{var}^2  D(\omega)K(\sigma^n \omega) e^{-\lambda n} \|v_\omega^0 \|_{BV}\cdot \| h\|_{BV} \\
			&\le C_{var}^2 D(\omega)K(\omega) e^{-\frac{\lambda}{2}  n} \|v_\omega^0 \|_{BV}\cdot \| h\|_{BV}, \\
		\end{split}
		\]
		for $\mathbb P$-a.e. $\omega \in \Omega$, $h\in BV$ such that $\int_X h\, d\mu_\omega=0$ and $n\in \mathbb N$. Thus, \eqref{est11} holds with $\lambda'=\lambda /2>0$ and 
		\begin{equation}\label{D1}
			\tilde D(\omega)=C_{var}^2 D(\omega)K(\omega)\| \tilde \Pi(\omega)\|_{BV} \cdot \|v_\omega^0 \|_{BV} \quad \omega \in \Omega,
		\end{equation}
		which is a  tempered random variable. 
		
		On the other hand, \eqref{Pi} together with the simple observations that $L_\omega^n\mathds 1=\mathds 1$ and $\|\mathds 1\|_{BV}=1$ implies that 
		\[
		\begin{split}
			\|L_\omega^n (\Id- \tilde \Pi (\omega))h\|_{BV}&= \bigg | \int_X h\, d\mu_\omega \bigg  | \cdot \| L_\omega^n \mathds 1\|_{BV} \\
			&\le \| h\|_\infty\\
			&\le C_{var}\| h \|_{BV}
		\end{split}
		\]
		for $\mathbb  P$-a.e. $\omega \in \Omega$ and $n\in \N$. Thus, \eqref{est22} holds with 
		\begin{equation}\label{D2}
			\tilde D(\omega)=C_{var}
		\end{equation}
		which is  constant and thus also tempered. Hence, \eqref{est11} and~\eqref{est22} hold with $\tilde D$ being the maximum of the expressions in~\eqref{D1} and~\eqref{D2}. This completes the proof. 
	\end{proof}

	\subsection{Proof of Theorem~\ref{TM}: Adapted norms}

	
	\begin{lemma}\label{PRO}
		Let $\mathcal L=(\mathcal L_\omega)_{\omega \in \Omega}$ be a good cocycle of transfer operators. Then, there is a family $\| \cdot \|_\omega$, $\omega \in \Omega$ of norms on $BV$ with the following properties:
		
		\begin{enumerate}
			\item There exists a tempered random variable $K\colon \Omega \to [1, +\infty)$ such that
			\begin{equation}\label{ln1}
				\| \phi \|_{BV} \le \| \phi \|_\omega \le K(\omega) \| \phi \|_{BV} \quad \text{for $\mathbb P$-a.e. $\omega \in \Omega$ and $\phi \in BV$;}
			\end{equation}
			In particular, $\|\cdot\|_\omega$ is a complete norm.
			\vskip0.1cm
			\item For $\mathbb P$-a.e. $\omega \in \Omega$, $\phi \in BV$ and $n\in \mathbb N$,
			\begin{equation}\label{ln2}
				\| L_\omega^n \tilde \Pi(\omega)\phi \|_{\sigma^n \omega}\le e^{-\lambda' n}\| \phi \|_\omega;
			\end{equation}
			\item For $\mathbb P$-a.e. $\omega \in \Omega$ and $\phi \in BV$,
			\begin{equation}\label{ln3}
				\bigg |\int_X \phi \, d\mu_\omega \bigg | \le \| \phi \|_\omega;
			\end{equation}
			\item for $\mathbb P$-a.e. $\omega \in \Omega$ and $\phi \in BV$,
			\begin{equation}\label{ln4}
				\| L_\omega \phi \|_{\sigma \omega} \le \| \phi \|_\omega;
			\end{equation}
			\item we have that 
			\begin{equation}\label{ln5}
				\|\mathds 1\|_\omega =1, \quad \text{for $\mathbb P$-a.e. $\omega \in \Omega$.}
			\end{equation}
		\end{enumerate}
	\end{lemma}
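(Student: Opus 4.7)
The plan is to construct the adapted norms explicitly using the splitting $BV = \spn\{\mathds 1\} \oplus BV_\omega^0$ and the exponential decay~\eqref{est11} established in Corollary~\ref{Cor}. For each $\omega$, write any $\phi \in BV$ uniquely as $\phi = c_\phi(\omega)\mathds 1 + \psi_\omega$ with $c_\phi(\omega) := \int_X \phi\, d\mu_\omega$ and $\psi_\omega := \tilde\Pi(\omega)\phi \in BV_\omega^0$, and define
\[
\|\phi\|_\omega := |c_\phi(\omega)| + \sup_{n\ge 0} e^{\lambda' n}\|L_\omega^n \psi_\omega\|_{BV}.
\]
The supremum is finite because \eqref{est11} gives $e^{\lambda' n}\|L_\omega^n \psi_\omega\|_{BV} \le \tilde D(\omega)\|\phi\|_{BV}$. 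Bilinearity of $\phi \mapsto (c_\phi(\omega), \psi_\omega)$ together with the norm properties of $|\cdot|$ and $\|\cdot\|_{BV}$ makes $\|\cdot\|_\omega$ a norm.

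Properties (3) and (5) are then immediate from the definition: property (3) because $|c_\phi(\omega)|$ is one of the two summands, and property (5) because $c_{\mathds 1}(\omega) = 1$ and $\tilde\Pi(\omega)\mathds 1 = 0$. For property (1), the lower bound follows from the triangle inequality in $BV$ applied to $\phi = c_\phi(\omega)\mathds 1 + \psi_\omega$ together with $\|\mathds 1\|_{BV}=1$ and the trivial estimate $\|\psi_\omega\|_{BV} \le \sup_{n\ge 0} e^{\lambda' n}\|L_\omega^n \psi_\omega\|_{BV}$ (from $n=0$). For the upper bound, combine $|c_\phi(\omega)| \le \|\phi\|_\infty \le C_{\var}\|\phi\|_{BV}$ (by (V3) since $\mu_\omega$ is a probability) with the estimate above, giving $\|\phi\|_\omega \le K(\omega)\|\phi\|_{BV}$ with $K(\omega) := C_{\var} + \tilde D(\omega)$; this is tempered since $\tilde D$ is tempered by Corollary~\ref{Cor}.

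For properties (2) and (4), the key observation is the equivariance of the splitting under the normalized cocycle. A direct computation using the defining relation $L_\omega h = \mathcal L_\omega(hv_\omega^0)/v_{\sigma\omega}^0$ and $\mathcal L_\omega v_\omega^0 = v_{\sigma\omega}^0$ gives
\[
\int_X L_\omega h \, d\mu_{\sigma\omega} = \int_X \mathcal L_\omega(hv_\omega^0)\, dm = \int_X h\, d\mu_\omega,
\]
so $c_{L_\omega \phi}(\sigma\omega) = c_\phi(\omega)$ and (since also $L_\omega \mathds 1 = \mathds 1$) $\tilde\Pi(\sigma\omega) L_\omega \phi = L_\omega \psi_\omega$. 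In particular $L_\omega^n$ maps $BV_\omega^0$ into $BV_{\sigma^n\omega}^0$. For property (2), apply $L_\omega^n$ to $\tilde\Pi(\omega)\phi = \psi_\omega$: the $c$-part of the image vanishes, and an index shift $k \mapsto k+n$ in the supremum gives
\[
\|L_\omega^n \psi_\omega\|_{\sigma^n\omega} = \sup_{k\ge 0} e^{\lambda' k}\|L_\omega^{n+k}\psi_\omega\|_{BV} = e^{-\lambda' n}\sup_{m\ge n}e^{\lambda' m}\|L_\omega^m\psi_\omega\|_{BV} \le e^{-\lambda' n}\|\phi\|_\omega.
\]
Property (4) follows similarly: using $c_{L_\omega \phi}(\sigma\omega) = c_\phi(\omega)$ and the identity above with $n=1$, one obtains $\|L_\omega\phi\|_{\sigma\omega} = |c_\phi(\omega)| + \sup_{k\ge 0}e^{\lambda' k}\|L_\omega^{k+1}\psi_\omega\|_{BV} \le \|\phi\|_\omega$ after another index shift.

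There is no serious obstacle here; the construction is standard once the tempered spectral-gap bound~\eqref{est11} is available. The only point requiring attention is the equivariance of the decomposition $\phi = c_\phi(\omega)\mathds 1 + \psi_\omega$ under $L_\omega$, which ensures that the index-shift arguments in (2) and (4) go through cleanly, and the verification that $K(\omega) = C_{\var} + \tilde D(\omega)$ inherits temperedness from $\tilde D$, which is precisely the content of the statement in Corollary~\ref{Cor}.
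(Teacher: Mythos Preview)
Your proof is correct and takes essentially the same approach as the paper: you define the adapted norm exactly as the paper does, namely $\|\phi\|_\omega = |\int_X \phi\, d\mu_\omega| + \sup_{n} e^{\lambda' n}\|L_\omega^n \tilde\Pi(\omega)\phi\|_{BV}$, and verify the five properties by the same arguments (the index-shift for (2) and (4), the $n=0$ term for the lower bound in (1), and the same tempered constant $K(\omega)=C_{\var}+\tilde D(\omega)$). The only cosmetic difference is that you spell out the equivariance $c_{L_\omega\phi}(\sigma\omega)=c_\phi(\omega)$ and $\tilde\Pi(\sigma\omega)L_\omega = L_\omega\tilde\Pi(\omega)$ explicitly, whereas the paper uses it tacitly.
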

	
	\begin{proof}
		By using the same notation as in the statement of Corollary~\ref{Cor}, we set 
		\begin{equation}\label{AN}
			\begin{split}
				\| \phi \|_\omega &=\sup_{n\in \N} (\| L_\omega^n \tilde \Pi(\omega)\phi \|_{BV} e^{\lambda' n})+\bigg | \int_X\phi \, d\mu_\omega \bigg |,
			\end{split}
		\end{equation}
		for $\phi \in BV$ and $\mathbb P$-a.e. $\omega \in \Omega$.
		
		We begin by observing that it follows from~\eqref{est11}  and the simple observation  $\| \cdot \|_{L^1(\mu_\omega)}\le C_{var} \| \cdot \|_{BV}$ that 
		\[
		\| \phi \|_\omega \le ( \tilde D(\omega)+C_{var})\| \phi \|_{BV},
		\]
		for $\mathbb P$-a.e. $\omega \in \Omega$ and $\phi \in BV$. On the other hand,
		\[
		\begin{split}
			\| \phi \|_\omega & \ge \| \tilde{\Pi} (\omega) \phi \|_{BV}+\bigg | \int_X \phi \, d\mu_\omega \bigg | \\
			&\ge \| \tilde{\Pi} (\omega) \phi \|_{BV}+\|(\Id-\tilde \Pi(\omega))\phi \|_{BV} \\
			&\ge \| \phi \|_{BV},
		\end{split}
		\]
		for $\mathbb P$-a.e. $\omega \in \Omega$ and $\phi \in BV$. Hence, \eqref{ln1} holds with
		\begin{equation}\label{K-exp}
		K(\omega)= \tilde D(\omega)+C_{var}, \quad \omega \in \Omega.
		\end{equation}
		Moreover, since $\int_X\tilde \Pi(\omega)\phi d\mu_\omega=0$ we have that 
		\[
		\begin{split}
			\| L_\omega^n \tilde \Pi(\omega)\phi \|_{\sigma^n \omega} &=\sup_{m\in \N}( \| L_{\sigma^n \omega}^m \tilde \Pi(\sigma^n \omega) L_\omega^n \phi \|_{BV} e^{\lambda' m}) \\
			&=\sup_{m\in \mathbb N}(\| L_\omega^{n+m}\tilde \Pi(\omega)\phi \|_{BV}e^{\lambda' m}) \\
			&=e^{-\lambda' n}\sup_{m\in \mathbb N}(\| L_\omega^{n+m}\tilde \Pi(\omega)\phi \|_{BV}e^{\lambda' (m+n)}) \\
			&\le e^{-\lambda' n}\sup_{m\in \mathbb N}(\| L_\omega^{m}\tilde \Pi(\omega)\phi \|_{BV}e^{\lambda' m}) \\
			&\le e^{-\lambda'n} \| \phi \|_\omega,
		\end{split}
		\]
		for $\mathbb P$-a.e. $\omega \in \Omega$, $n\in \mathbb N$ and $\phi \in BV$. We conclude that~\eqref{ln2} holds. 
		Furthermore, \eqref{ln3} follows readily from~\eqref{AN}.
		In addition, we have that 
		\[
		\begin{split}
			\| L_\omega \phi \|_{\sigma \omega} &=\sup_{n\in \N} (\| L_{\sigma \omega}^n \tilde \Pi(\sigma \omega)L_\omega \phi \|_{BV} e^{\lambda' n})+ \bigg | \int_X L_\omega \phi \, d\mu_{\sigma \omega} \bigg | \\
			&= \sup_{n\in \N} (\| L_{ \omega}^{n+1} \tilde \Pi(\omega) \phi \|_{BV} e^{\lambda' n})+ \bigg | \int_X \phi \, d\mu_\omega \bigg | \\
			&=e^{-\lambda'} \sup_{n\in \N} (\| L_{ \omega}^{n+1} \tilde \Pi(\omega) \phi \|_{BV} e^{\lambda' ( n+1)})+\bigg | \int_X \phi \, d\mu_\omega \bigg |  \\
			&\le \| \phi \|_\omega,
		\end{split}
		\]
		for $\mathbb P$-a.e. $\omega \in \Omega$ and $h\in BV$. Thus, \eqref{ln4} holds. Finally, \eqref{ln5} follows directly from~\eqref{AN}.
	\end{proof}
	
	\begin{remark}\label{norms}
		In~\cite[Section 3.1]{DDS}, we introduced a similar class of norms, adapted to the original cocycle of transfer operators $(\mathcal L_\omega)_{\omega \in \Omega}$. On the other hand, in Lemma~\ref{PRO} we construct norms adapted to the associated cocycle of normalized transfer operators 
		$(L_\omega)_{\omega \in \Omega}$.
		
		To the best of our understanding, in order to construct appropriate adapted norms for the cocycle $(L_\omega)_{\omega \in \Omega}$, one needs the additional requirement in Definition~\ref{good}.
		
		Finally, we note that adapted norms given by Lemma~\ref{PRO} have a simpler form than those constructed in the proof of~\cite[Proposition 30]{DDS}. The reason is that the top Oseledets space (see~\cite[Section 2.1]{DDS}) of the cocycle 
		$(L_\omega)_{\omega \in \Omega}$ is spanned by $\mathds 1$ and $L_\omega \mathds 1=\mathds 1$. Consequently, the cocycle $(L_\omega)_{\omega \in \Omega}$  does not exhibit any growth along the associated top Oseledets space.
	\end{remark}
	
	We also describe the construction of dual adapted norms. 
	
	\begin{lemma}\label{LL}
		Let $\mathcal L=(\mathcal L_\omega)_{\omega \in \Omega}$ be a good cocycle of transfer operators. Then, there is a family $\| \cdot \|_\omega^*$, $\omega \in \Omega$ of norms on $BV^*$ with the following properties:
		\begin{enumerate}
			\item For $\mathbb P$-a.e $\omega\in\Omega$ and $\ell\in BV^*$,
			\begin{equation}\label{DLN}
				\frac{1}{K(\omega)}\|\ell\|_{BV^*}\le\|\ell\|_\omega^*\le \|\ell\|_{BV^*},
			\end{equation}
			where $K\colon \Omega \to [1, +\infty)$ is as in~\eqref{ln1};
			\item  For $\mathbb P$-a.e $\omega\in\Omega$ and  $\ell\in BV^*$,
			\begin{equation}\label{dan3}
				\|L_\omega^*\ell\|^*_\omega \le \|\ell\|_{\sigma\omega}^*;
			\end{equation}
			\item For $\mathbb P$-a.e $\omega\in\Omega$,  $\ell\in BV^*$ and  $n\in \N$,
			\begin{equation}\label{dan2}
				\|(L_\omega^n)^*\Pi^*(\sigma^n\omega)\ell\|^*_{\omega}\le e^{-\lambda' n}\|\ell\|_{\sigma^n\omega}^*,
			\end{equation}
			where $\lambda'$ is as in~\eqref{ln2}  and\footnote{We identify $\mu_\omega$ with the functional $\phi \mapsto \int_X \phi \, d\mu_\omega$ on $BV$.} $\Pi^*(\omega)\ell:=\ell-\ell(1)\mu_\omega$;
			\item we have that 
			\begin{equation}\label{dan4}
				\esssup_{\omega \in \Omega} \| \mu_\omega \|^*_\omega <+\infty;
			\end{equation}
			\item for $l\in BV^*$ and $\mathbb P$-a.e. $\omega \in \Omega$,
			\begin{equation}\label{dan5}
				|l(\mathds 1)| \le \|l\|_\omega^*.
			\end{equation}
		\end{enumerate}
	\end{lemma}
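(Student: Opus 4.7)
The plan is to simply take $\|\cdot\|_\omega^*$ to be the Banach-space dual of the adapted norm $\|\cdot\|_\omega$ produced in Lemma~\ref{PRO}, namely
\[
\|\ell\|_\omega^* := \sup\{\, |\ell(\phi)| : \phi \in BV,\ \|\phi\|_\omega \le 1\, \}, \qquad \ell \in BV^*.
\]
Each of the five items then becomes a routine dualisation of the corresponding item of Lemma~\ref{PRO}, with the single nontrivial manipulation appearing in item (3).

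First, \eqref{DLN} follows immediately from \eqref{ln1}: the inclusion $\{\|\phi\|_\omega\le 1\}\subseteq\{\|\phi\|_{BV}\le 1\}$ gives $\|\ell\|_\omega^*\le\|\ell\|_{BV^*}$, while for any $\phi$ with $\|\phi\|_{BV}\le 1$ one has $\|\phi/K(\omega)\|_\omega\le 1$, yielding $\|\ell\|_{BV^*}\le K(\omega)\|\ell\|_\omega^*$. Item (2) follows by writing $(L_\omega^*\ell)(\phi)=\ell(L_\omega\phi)$ and invoking \eqref{ln4}, which shows that $L_\omega$ maps the unit ball of $\|\cdot\|_\omega$ into that of $\|\cdot\|_{\sigma\omega}$. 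Item (4) is immediate from \eqref{ln3}: any $\phi$ with $\|\phi\|_\omega\le 1$ satisfies $|\int\phi\,d\mu_\omega|\le 1$, so $\|\mu_\omega\|_\omega^*\le 1$. Finally, item (5) is a direct application of $\|\mathds 1\|_\omega=1$ given by \eqref{ln5}.

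The heart of the argument is item (3). The idea is to pair $(L_\omega^n)^*\Pi^*(\sigma^n\omega)\ell$ against an arbitrary $\phi\in BV$ and rewrite the result so that the contraction estimate \eqref{ln2} can be applied. Using the definition $\Pi^*(\sigma^n\omega)\ell=\ell-\ell(\mathds 1)\mu_{\sigma^n\omega}$ together with the equivariance relation $\int_X L_\omega^n\phi\,d\mu_{\sigma^n\omega}=\int_X\phi\,d\mu_\omega$, one computes
\[
\bigl(\Pi^*(\sigma^n\omega)\ell\bigr)(L_\omega^n\phi)=\ell(L_\omega^n\phi)-\ell(\mathds 1)\int_X\phi\,d\mu_\omega=\ell\bigl(L_\omega^n\tilde\Pi(\omega)\phi\bigr),
\]
where in the last step I use $\tilde\Pi(\omega)\phi=\phi-(\int\phi\,d\mu_\omega)\mathds 1$ and the invariance $L_\omega^n\mathds 1=\mathds 1$. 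Applying \eqref{ln2} to the right-hand side gives
\[
\bigl|\ell(L_\omega^n\tilde\Pi(\omega)\phi)\bigr|\le \|\ell\|_{\sigma^n\omega}^*\cdot\|L_\omega^n\tilde\Pi(\omega)\phi\|_{\sigma^n\omega}\le e^{-\lambda' n}\|\ell\|_{\sigma^n\omega}^*\cdot\|\phi\|_\omega,
\]
and taking the supremum over $\|\phi\|_\omega\le 1$ yields \eqref{dan2}.

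The only step requiring any care is the identification $\bigl(\Pi^*(\sigma^n\omega)\ell\bigr)(L_\omega^n\phi)=\ell(L_\omega^n\tilde\Pi(\omega)\phi)$, since everything else is either a standard dual-norm inequality or a direct transcription of the corresponding statement for $\|\cdot\|_\omega$. I do not anticipate any measurability issues: the norm $\|\ell\|_\omega^*$ inherits measurability in $\omega$ from that of $\|\cdot\|_\omega$ (via \eqref{ln1} and separability-type arguments already in place for the primal norm), and \eqref{DLN} controls it by the tempered variable $K(\omega)$.
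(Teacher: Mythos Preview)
Your proposal is correct and follows essentially the same approach as the paper: define $\|\cdot\|_\omega^*$ as the dual norm of $\|\cdot\|_\omega$ and read off each item by dualising the corresponding property from Lemma~\ref{PRO}. Your treatment of item (3) is in fact slightly more explicit than the paper's, which simply asserts the identity $(L_\omega^n)^*\Pi^*(\sigma^n\omega)\ell(\phi)=\ell(L_\omega^n\tilde\Pi(\omega)\phi)$ without writing out the intermediate step via $L_\omega^n\mathds 1=\mathds 1$ and the equivariance of $\mu_\omega$.
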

	
	\begin{proof} We follow closely the proof of~\cite[Proposition 33]{DDS}.
		For $\ell\in BV^*$ and $\mathbb P$-a.e. $\omega \in \Omega$,
		\[
		\|\ell\|^*_\omega:=\inf\{C>0: |\ell(\phi)|\le C\|\phi\|_\omega \ \text{for $\phi \in BV$}\},
		\]
		where $\| \cdot \|_\omega$, $\omega \in \Omega$ is the family of norms given by Lemma~\ref{PRO}.
		By~\eqref{ln1}, we have that 
		\[
		|\ell (\phi)| \le \| \ell \|_{BV^*} \cdot \| \phi \|_{BV} \le \| \ell \|_{BV^*} \cdot  \| \phi \|_\omega,
		\]
		for $\mathbb P$-a.e. $\omega \in \Omega$, $\ell \in BV^*$ and $\phi \in BV$. Hence, the second inequality in~\eqref{DLN} holds.  Moreover, 
		\[
		|\ell (\phi)| \le \| \ell \|_\omega^* \cdot \| \phi \|_\omega \le K(\omega)\| \ell \|_\omega^* \cdot \| \phi \|_{BV},
		\]
		for $\mathbb P$-a.e. $\omega \in \Omega$, $\ell \in BV^*$ and $\phi \in BV$, which yields the first inequality in~\eqref{DLN}.

		Furthermore, using~\eqref{ln4} we have that 
		\[
		|L_\omega^*\ell(\phi)| =|\ell (L_\omega \phi)| \le \| \ell\|_{\sigma\omega}^* \cdot \| L_\omega \phi \|_{\sigma \omega}\le \| \ell\|_{\sigma\omega}^* \cdot \| \phi \|_\omega,
		\]
		for $\mathbb P$-a.e. $\omega \in \Omega$, $\ell \in BV^*$ and $\phi \in BV$. Thus, \eqref{dan3} holds.
		
		On the other hand,  using~\eqref{ln2} we have that 
		\begin{align*}
			| (L_\omega^n)^*\Pi^*(\sigma^n\omega)\ell (\phi) | &=|\ell(L_\omega^n \tilde \Pi(\omega) \phi)|\\  
			&\le \| \ell \|_{\sigma^n \omega}^* \cdot \| L_\omega^n \tilde  \Pi(\omega) \phi \|_{\sigma^n \omega}\\ 
			&\le e^{-\lambda' n}\| \ell \|_{\sigma^n \omega}^* \cdot \| \phi \|_\omega, 
		\end{align*}
		for $\mathbb P$-a.e. $\omega \in \Omega$, $n\in \N$ and $\ell \in BV^*$. Therefore, \eqref{dan2} holds.
		
		Moreover, by~\eqref{ln3} we have that 
		\[
		|\mu_\omega(\phi)| =\bigg | \int_X \phi \, d\mu_\omega \bigg |  \le \| \phi \|_\omega, 
		\]
		for $\mathbb P$-a.e. $\omega \in \Omega$ and $\phi \in BV$. Hence, \eqref{dan4} holds. Finally, \eqref{dan5} follows readily from~\eqref{ln5}.
	\end{proof}

	\subsection{Proof of Theorem~\ref{TM}: perturbation results and consequences}

	Throughout this section, we consider a good cocycle $(\mathcal L_\omega)_{\omega \in \Omega}$, and an observable $\psi:\Omega\times X\to\R^d$, as in the statement of Theorem~\ref{TM}.
	By $|x|$ we  will denote the Euclidean norm of $x\in \mathbb C^d$. Moreover, we write $\psi_\omega$ instead of $\psi(\omega, \cdot)$.
	
	For $\theta \in \mathbb C^d$, $\omega \in \Omega$ and $\phi \in BV$, we (formally) set
	\[
	L_\omega^\theta \phi :=L_\omega (e^{\theta \cdot \psi_\omega} \phi).
	\]
	The proof of the following lemma is inspired by the proof of~\cite[Lemma 36]{DDS}.
	\begin{lemma}
		There exists $C'>0$ such that 
		\begin{equation}\label{twist}
			\| L_\omega^\theta \phi \|_{\sigma \omega} \le C' \| \phi \|_\omega \quad \text{for $\mathbb P$-a.e. $\omega \in \Omega$, $| \theta | \le 1$ and $\phi \in BV$,}
		\end{equation}
		where $\| \cdot \|_\omega$, $\omega \in \Omega$ is the family of norms given by Lemma~\ref{PRO}.
	\end{lemma}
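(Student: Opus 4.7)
The plan is to split off the constant part of the multiplier by writing $e^{\theta\cdot\psi_\omega}=1+(e^{\theta\cdot\psi_\omega}-1)$, so that
\[
L_\omega^\theta\phi = L_\omega\phi + L_\omega\bigl((e^{\theta\cdot\psi_\omega}-1)\phi\bigr).
\]
The first summand is harmless: by the contraction property \eqref{ln4} of the adapted norms, $\|L_\omega\phi\|_{\sigma\omega}\le\|\phi\|_\omega$. For the second summand, another application of \eqref{ln4} gives $\|L_\omega((e^{\theta\cdot\psi_\omega}-1)\phi)\|_{\sigma\omega}\le\|(e^{\theta\cdot\psi_\omega}-1)\phi\|_\omega$, and then the comparison \eqref{ln1} between the adapted norm and $\|\cdot\|_{BV}$ yields
\[
\|(e^{\theta\cdot\psi_\omega}-1)\phi\|_\omega \le K(\omega)\,\|(e^{\theta\cdot\psi_\omega}-1)\phi\|_{BV}.
\]
The Banach algebra estimate \eqref{mc} followed by the trivial $\|\phi\|_{BV}\le\|\phi\|_\omega$ (also from \eqref{ln1}) reduces the task to bounding $K(\omega)\|e^{\theta\cdot\psi_\omega}-1\|_{BV}$ by a constant independent of $\omega$ and $\theta$.

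To control the latter factor, I would use the elementary bound $|e^z-1|\le|z|e^{|z|}$ together with property (V9) applied to $h(t)=e^t$ on $[-M,M]$ with $M:=\|\theta\cdot\psi_\omega\|_\infty$. This yields both
\[
\|e^{\theta\cdot\psi_\omega}-1\|_\infty \le e^M\,\|\theta\cdot\psi_\omega\|_\infty
\qquad\text{and}\qquad
\var(e^{\theta\cdot\psi_\omega}-1)\le e^M\,\var(\theta\cdot\psi_\omega),
\]
so via (V3) one obtains $\|e^{\theta\cdot\psi_\omega}-1\|_{BV}\le (1+C_{\var})\,e^M\,\|\theta\cdot\psi_\omega\|_{BV}$. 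Since $|\theta|\le 1$, this last quantity is bounded by $(1+C_{\var})\,e^M\sum_{i=1}^d\|\psi_\omega^i\|_{BV}$.

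The crucial step, and the one whose success depends on the hypotheses, is to extract uniformity from the scaling condition \eqref{eq:condobservable}. That condition gives a constant $C_0$ with $K(\omega)\|\psi_\omega^i\|_{BV}\le C_0$ for all $i$, and since $K(\omega)\ge 1$ this already forces $\|\psi_\omega^i\|_{BV}\le C_0$. Combining these two remarks,
\[
K(\omega)\|e^{\theta\cdot\psi_\omega}-1\|_{BV}\le (1+C_{\var})\,e^M\sum_{i=1}^d K(\omega)\|\psi_\omega^i\|_{BV}\le d\,(1+C_{\var})\,C_0\,e^M,
\]
while $M\le C_{\var}\|\psi_\omega\|_{BV}\le C_{\var}\,d\,C_0$, so $e^M$ is itself bounded by a universal constant. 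Collecting all these estimates yields \eqref{twist} with an explicit $C'$ depending only on $C_{\var}$, $d$ and $C_0$.

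The only substantive obstacle is the last point: without \eqref{eq:condobservable} the factor $K(\omega)$ coming from \eqref{ln1} cannot be absorbed, reflecting the fact that in the expanding-on-average regime the adapted norm is unavoidably non-uniform in $\omega$, and one needs the observable to be small precisely on the scale of $K(\omega)^{-1}$ in order for the twisted operator to behave like a bounded perturbation of $L_\omega$.
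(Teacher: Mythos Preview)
Your argument is correct and follows the same route as the paper's proof: split $L_\omega^\theta\phi = L_\omega\phi + L_\omega\bigl((e^{\theta\cdot\psi_\omega}-1)\phi\bigr)$, use \eqref{ln4} and \eqref{ln1} to reduce to bounding $K(\omega)\,\|e^{\theta\cdot\psi_\omega}-1\|_{BV}$, and then invoke \eqref{eq:condobservable} to absorb $K(\omega)$.

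The only point to watch is your direct application of (V9) with inner function $f=\theta\cdot\psi_\omega$ and $h(t)=e^t$: since (V9) requires $f$ to be real-valued with range in $[-M,M]$, this step is not literally valid for complex $\theta\in\mathbb C^d$ (and the lemma is stated and used for complex $\theta$). The paper sidesteps this by writing $e^{\theta\cdot\psi_\omega}=\prod_{i=1}^d e^{\theta_i\psi_\omega^i}$ and applying (V9) componentwise with $f=\psi_\omega^i$ (real) and $h(t)=e^{\theta_i t}$ (complex-valued $C^1$), obtaining $\|e^{\theta_i\psi_\omega^i}-1\|_{BV}\le(1+C_{\var})e^{\|\psi_\omega^i\|_\infty}\|\psi_\omega^i\|_{BV}$; a telescoping product together with \eqref{mc} then recovers your bound $\|e^{\theta\cdot\psi_\omega}-1\|_{BV}\le D\sum_i\|\psi_\omega^i\|_{BV}$. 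Alternatively, your direct argument works verbatim once you decompose $\theta=a+ib$ with $a,b\in\mathbb R^d$ and handle the two real-valued functions $a\cdot\psi_\omega$ and $b\cdot\psi_\omega$ separately. This is a minor technical adjustment; the substance of your proof is the same as the paper's.
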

	
	\begin{proof}
		Since $K(\omega) \ge 1$, it follows from~\eqref{eq:condobservable} that 
		\begin{equation}\label{EQ}
			\esssup_{\omega \in \Omega}   \|\psi_\omega^i\|_{BV} <+\infty, 
		\end{equation}
		for $1\le i \le d$.
		Take $\theta \in \mathbb C^d$ such that $|\theta| \le 1$.
		By~\eqref{ln1} and~\eqref{ln4}, we have that 
		\begin{equation}\label{AA}
			\begin{split}
				\| L_\omega^\theta \phi-L_\omega \phi \|_{\sigma \omega} &=  \| L_\omega ( (e^{\theta \cdot \psi_\omega}-1) \phi ) \|_{\sigma \omega} \\
				&\le  \|(e^{\theta \cdot \psi_\omega}-1) \phi )\|_\omega \\
				&\le K(\omega)  \|(e^{\theta \cdot \psi_\omega}-1) \phi )\|_{BV} \\
				&\le C_{var}K(\omega) \| e^{\theta \cdot \psi_\omega}-1\|_{BV} \cdot \| \phi \|_{BV} \\
				&\le C_{var} K(\omega) \| e^{\theta \cdot \psi_\omega}-1\|_{BV} \cdot \| \phi \|_\omega,
			\end{split}
		\end{equation}
		for $\mathbb P$-a.e. $\omega \in \Omega$ and $\phi \in BV$. 
		
		On the other hand, we have that 
		\begin{equation}\label{r1}
			\begin{split}
				\|  e^{\theta \cdot \psi_\omega}-1\|_{BV} &=\left\| \prod_{i=1}^d e^{\theta_i \psi_\omega^i} -1 \right\|_{BV} \\
				&\le C_{var}^d \sum_{i=1}^d \prod_{j=1}^{i-1} \|  e^{\theta_j \psi_\omega^j} \|_{BV} \cdot \| e^{\theta_i \psi_\omega^i }-1\|_{BV}
			\end{split}
		\end{equation}
		Moreover, for $1\le i \le d$ we have (see the proof of~\cite[Lemma 36]{DDS}) that 
		\begin{equation}\label{r2}
			\| e^{\theta_i \psi_\omega^i }-1\|_{BV} \le (1+C_{var})e^{\| \psi_\omega^i \|_\infty} \| \psi_\omega^i \|_{BV} 
		\end{equation}
		and consequently 
		\begin{equation}\label{r3}
			\| e^{\theta_i \psi_\omega^i }\|_{BV} \le 1+(1+C_{var})e^{\| \psi_\omega^i \|_\infty} \| \psi_\omega^i \|_{BV}.
		\end{equation}
		It follows from~\eqref{EQ}, \eqref{r1}, \eqref{r2} and~\eqref{r3}  that  there exists a constant $D>0$ such that 
		\begin{equation}\label{A}
			\|  e^{\theta \cdot \psi_\omega}-1\|_{BV} \le D \sum_{i=1}^d \| \psi_\omega^i \|_{BV}, \quad \text{for $\mathbb P$-a.e. $\omega \in \Omega$.}
		\end{equation}
		By~\eqref{eq:condobservable}, \eqref{AA} and~\eqref{A}, we find that there exists another constant $D'>0$ such that 
		\[
		\| L_\omega^\theta \phi-L_\omega \phi \|_{\sigma \omega} \le D'\| \phi \|_\omega, \quad \text{for $\mathbb P$-a.e. $\omega \in \Omega$ and $\phi \in BV$.}
		\]
		Finally, we observe that~\eqref{twist} follows readily from~\eqref{ln4} together with the above estimate. The proof of the lemma is completed. 
	\end{proof}

	Let $\mathcal S$ denote the space consisting of all measurable $\mathcal V \colon \Omega \times X \to \R$ such that $\mathcal V_\omega:=\mathcal V(\omega, \cdot) \in BV$ for $\mathbb P$-a.e. $\omega \in \Omega$ and 
	\[
	\| \mathcal V\|_{\mathcal S}:= \esssup_{\omega \in \Omega} \| \mathcal V_\omega\|_\omega <+\infty,
	\]
	where $\| \cdot \|_\omega$, $\omega \in \Omega$ is the family of norms given by Lemma~\ref{PRO}.
	Then, $(\mathcal S, \| \cdot \|_{\mathcal S})$ is a Banach space. 
	
	Furthermore, let $\mathcal S_0$ denote the set of all $\mathcal V\in \mathcal S$ such that 
	\[
	\int_X \mathcal V_\omega \, d\mu_\omega=0, \quad \text{for $\mathbb P$-a.e. $\omega \in \Omega$.}
	\]
	Using~\eqref{ln3}, it is easy to verify that $\mathcal S_0$ is a closed subspace of $\mathcal S$.
	
	For $(\theta, \mathcal W)\in \mathbb C^d \times \mathcal S_0$, we formally define
	\[
	F(\theta, \mathcal W)(\omega, \cdot)=\frac{L_{\sigma^{-1} \omega}^\theta(\mathds 1+\mathcal W_{\sigma^{-1} \omega})}{\int_X L_{\sigma^{-1} \omega}^\theta (\mathds 1+\mathcal W_{\sigma^{-1} \omega})\, d\mu_\omega}-\mathds 1-\mathcal W_\omega, \quad \omega \in \Omega.
	\]
	By arguing as in the proof of~\cite[Lemma 41]{DDS}, one can establish the following result.
	\begin{lemma}\label{L1}
		There exists a neighborhood $\mathcal U$ of $(0,0) \in \mathbb C^d\times \mathcal S_0$ such that $F\colon \mathcal U \to \mathcal S_0$ is well-defined and analytic. 
		Furthermore, its differential w.r.t $\mathcal W$ at $(0,0)$, $D_2F(0,0)\colon \mathcal S_0\to\mathcal S_0$ is invertible. 
	\end{lemma}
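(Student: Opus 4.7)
The plan is to verify in turn: (i) that $F$ is well-defined on a neighbourhood of $(0,0)$, (ii) that its range lies in $\mathcal{S}_0$, (iii) that $F$ is analytic there, and (iv) that $D_2F(0,0)$ is a Banach-space isomorphism. Throughout, the adapted norms of Lemma~\ref{PRO} are essential because every estimate must be controlled uniformly in $\omega$.

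For well-definedness I would first control the scalar denominator
\[
\mathcal N(\theta, \mathcal W)(\omega) := \int_X L_{\sigma^{-1}\omega}^\theta(\mathds{1} + \mathcal W_{\sigma^{-1}\omega})\, d\mu_\omega.
\]
Using the $L$-equivariance of $\{\mu_\omega\}$ and the mean-zero hypothesis $\int \mathcal W_{\sigma^{-1}\omega}\, d\mu_{\sigma^{-1}\omega} = 0$, one rewrites
\[
\mathcal N(\theta, \mathcal W)(\omega) - 1 = \int_X (e^{\theta \cdot \psi_{\sigma^{-1}\omega}} - 1)(\mathds{1} + \mathcal W_{\sigma^{-1}\omega})\, d\mu_{\sigma^{-1}\omega}.
\]
Bounding this via \eqref{ln3}, \eqref{twist}, and the uniform-in-$\omega$ estimate \eqref{eq:condobservable} on $\|\psi_\omega^i\|_{BV}$ yields $|\mathcal N - 1| = O(|\theta|(1 + \|\mathcal W\|_{\mathcal S}))$ uniformly in $\omega$, hence $\mathcal N \geq 1/2$ on a suitable neighbourhood. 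Combined with the uniform numerator bound $\|L^\theta_{\sigma^{-1}\omega}(\mathds 1 + \mathcal W_{\sigma^{-1}\omega})\|_\omega \leq C'(1 + \|\mathcal W\|_{\mathcal S})$ from \eqref{twist}, this gives $F(\theta, \mathcal W) \in \mathcal S$ with a uniform estimate. The inclusion $F(\theta, \mathcal W) \in \mathcal S_0$ is a direct computation: $\int F(\theta, \mathcal W)_\omega\, d\mu_\omega = \mathcal N/\mathcal N - 1 - 0 = 0$. Analyticity follows from three standard ingredients: $\theta \mapsto e^{\theta \cdot \psi_\omega}$ is entire as a map into $BV$ with power series converging uniformly in $\omega$ on $|\theta| \leq 1$ by virtue of \eqref{eq:condobservable}; multiplication in $BV$ is continuous bilinear and hence analytic; and the scalar nonvanishing denominator gives an analytic reciprocal.

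For the invertibility of $D_2F(0,0)$, set $\theta = 0$. Since $L_{\sigma^{-1}\omega}\mathds 1 = \mathds 1$ and $\int \mathcal W_{\sigma^{-1}\omega}\, d\mu_{\sigma^{-1}\omega} = 0$, the numerator becomes $\mathds 1 + L_{\sigma^{-1}\omega}\mathcal W_{\sigma^{-1}\omega}$ and the denominator reduces to $1$, so
\[
F(0, \mathcal W)(\omega, \cdot) = L_{\sigma^{-1}\omega}\mathcal W_{\sigma^{-1}\omega} - \mathcal W_\omega,
\]
which is already linear in $\mathcal W$. Introducing the shift-and-transfer operator $T\colon \mathcal S_0 \to \mathcal S_0$ defined by $(T\mathcal V)(\omega, \cdot) := L_{\sigma^{-1}\omega}\mathcal V_{\sigma^{-1}\omega}$, we get $D_2F(0,0) = T - \mathrm{Id}$. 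Iterating yields $(T^n \mathcal V)(\omega, \cdot) = L_{\sigma^{-n}\omega}^n \mathcal V_{\sigma^{-n}\omega}$; since $\mathcal V_{\sigma^{-n}\omega} \in BV^0_{\sigma^{-n}\omega}$ one has $\tilde\Pi(\sigma^{-n}\omega) \mathcal V_{\sigma^{-n}\omega} = \mathcal V_{\sigma^{-n}\omega}$, so \eqref{ln2} together with $\sigma$-invariance of the essential supremum gives $\|T^n \mathcal V\|_{\mathcal S} \leq e^{-\lambda' n} \|\mathcal V\|_{\mathcal S}$. Therefore $T$ is a uniform contraction of rate $e^{-\lambda'} < 1$ on $\mathcal S_0$, and $D_2F(0,0) = -(\mathrm{Id} - T)$ is invertible with inverse $-\sum_{n \geq 0} T^n \in \mathcal L(\mathcal S_0)$.

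The main obstacle, in my view, is not any single step but the requirement that all estimates be genuinely uniform in $\omega$. This is precisely the role of the adapted norms: they convert the nonuniform, cocycle-valued estimates \eqref{est11}--\eqref{est22} (whose constants depend on $\tilde D(\omega)$) into uniform Banach-space estimates on $\mathcal S_0$, so that the geometric series $\sum T^n$ converges in operator norm on $\mathcal S_0$ and not merely fiberwise. A secondary but indispensable point is that the uniform-in-$\omega$ analyticity of $\theta \mapsto e^{\theta \cdot \psi_\omega}$ valued in $BV$ hinges on the scaling condition \eqref{eq:condobservable} through the essential boundedness of $K(\omega)\|\psi_\omega^i\|_{BV}$; without it, even the preliminary analyticity statement would hold only fiberwise.
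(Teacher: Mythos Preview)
Your proposal is correct and follows essentially the same route as the paper, which defers the proof to~\cite[Lemma~41]{DDS}: one checks well-definedness by bounding the denominator away from zero, verifies the $\mathcal S_0$-valued range directly, obtains analyticity from the uniform-in-$\omega$ power series for $e^{\theta\cdot\psi_\omega}$ (crucially using~\eqref{eq:condobservable}), and proves invertibility of $D_2F(0,0)=T-\mathrm{Id}$ via the Neumann series, with the contraction $\|T^n\|_{\mathcal S_0\to\mathcal S_0}\le e^{-\lambda' n}$ coming from the adapted-norm estimate~\eqref{ln2}. Your emphasis on the adapted norms as the device that converts fiberwise tempered bounds into uniform Banach-space bounds is exactly the point of the construction.
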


	The following result follows from Lemma~\ref{L1} and the implicit function theorem (exactly as in the proof of~\cite[Theorem 42]{DDS}).
	\begin{lemma}\label{L}
		There exists a neighborhood $U$ of $0\in\mathbb C^d$, such that for any $\theta\in U$, there exist $v^\theta\in\mathcal S$, $\lambda^\theta\in L^\infty(\Omega)$, satisfying:
		\begin{enumerate}
			\item The maps $U\ni \theta \mapsto v^\theta\in\mathcal S$ and $U\ni \theta \mapsto \lambda^\theta\in L^\infty(\Omega)$ are analytic.
			\item For any $\theta\in U$ and $\mathbb P$-a.e. $\omega\in\Omega$, $v_\omega^\theta$, $\lambda^\theta_\omega$ satisfy:
			\[
			\begin{split}
				L_\omega^\theta v_\omega^\theta&=\lambda_\omega^\theta v_{\sigma\omega}^\theta,\\
				\lambda_\omega^\theta&=\int_X L_\omega^\theta v_\omega^\theta \, d\mu_{\sigma \omega},\\
				1&=\int_X v_\omega^\theta \, d\mu_\omega.
			\end{split}
			\]
		\end{enumerate}
	\end{lemma}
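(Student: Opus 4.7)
The plan is to apply the analytic implicit function theorem to the map $F$ from Lemma~\ref{L1} at the base point $(0,0)\in\mathbb{C}^d\times\mathcal{S}_0$. A direct check using $L_\omega \mathds 1=\mathds 1$ and $\int_X \mathds 1\,d\mu_{\sigma\omega}=1$ shows $F(0,0)=0$. Combined with the analyticity of $F$ near $(0,0)$ and the invertibility of $D_2F(0,0):\mathcal S_0\to\mathcal S_0$ supplied by Lemma~\ref{L1}, the implicit function theorem then produces a neighborhood $U\subset\mathbb{C}^d$ of $0$ and an analytic map $U\ni\theta\mapsto\mathcal{W}^\theta\in\mathcal{S}_0$ with $\mathcal{W}^0=0$ and $F(\theta,\mathcal{W}^\theta)=0$ for every $\theta\in U$.

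With $\mathcal W^\theta$ in hand, I would set $v^\theta_\omega:=\mathds 1+\mathcal{W}^\theta_\omega$ and define $\lambda^\theta_\omega:=\int_X L^\theta_\omega v^\theta_\omega\,d\mu_{\sigma\omega}$. Applying the identity $F(\theta,\mathcal{W}^\theta)=0$ at $\sigma\omega$ in place of $\omega$ yields
\[
\frac{L^\theta_\omega v^\theta_\omega}{\int_X L^\theta_\omega v^\theta_\omega\,d\mu_{\sigma\omega}}=v^\theta_{\sigma\omega},
\]
which is precisely the eigen-equation $L^\theta_\omega v^\theta_\omega=\lambda^\theta_\omega v^\theta_{\sigma\omega}$. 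The normalization $\int_X v^\theta_\omega\,d\mu_\omega=1$ is inherited from $\mathcal W^\theta\in\mathcal S_0$, and the integral formula for $\lambda^\theta_\omega$ is its definition; this verifies all three identities of item~(2).

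For item~(1), analyticity of $U\ni\theta\mapsto v^\theta\in\mathcal S$ is immediate from that of $\theta\mapsto\mathcal W^\theta$ delivered by the implicit function theorem. To handle $\theta\mapsto\lambda^\theta\in L^\infty(\Omega)$, I would factor it as the composition of $\theta\mapsto L^\theta_\cdot v^\theta_\cdot\in\mathcal S$ with the evaluation $\mathcal V\mapsto(\omega\mapsto\int_X\mathcal V_\omega\,d\mu_{\sigma\omega})$ from $\mathcal S$ into $L^\infty(\Omega)$, whose boundedness follows from~\eqref{ln3}. Analyticity of the first factor reduces to the uniform-in-$\omega$ convergence of the Taylor expansion of $e^{\theta\cdot\psi_\omega}$, which is underwritten by the twist estimate~\eqref{twist} and the scaling condition~\eqref{eq:condobservable}.

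The main delicate point I expect is ensuring that all estimates entering the implicit function theorem (invertibility of $D_2F(0,0)$ from Lemma~\ref{L1}, together with the analyticity argument just sketched) are genuinely uniform in $\omega$ with respect to the adapted norms $\|\cdot\|_\omega$, rather than with respect to the original $\|\cdot\|_{BV}$ for which the comparison constant $K(\omega)$ is only tempered, not uniformly bounded. This is precisely the technical benefit that motivated the construction in Lemma~\ref{PRO}, and with those norms in place the argument should follow the template of the proof of~\cite[Theorem 42]{DDS}, now applied to the normalized cocycle $(L_\omega)_{\omega\in\Omega}$ rather than to the original cocycle $(\mathcal L_\omega)_{\omega\in\Omega}$.
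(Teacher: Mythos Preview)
Your proposal is correct and follows exactly the approach the paper indicates: apply the analytic implicit function theorem to $F$ at $(0,0)$ using the input from Lemma~\ref{L1}, then set $v^\theta=\mathds 1+\mathcal W^\theta$ and $\lambda^\theta_\omega=\int_X L_\omega^\theta v_\omega^\theta\,d\mu_{\sigma\omega}$, which is precisely the template of~\cite[Theorem~42]{DDS} transported to the normalized cocycle $(L_\omega)_\omega$ and the adapted norms. Your verification of $F(0,0)=0$, the derivation of the eigen-equation via the substitution $\omega\mapsto\sigma\omega$, the normalization from $\mathcal W^\theta\in\mathcal S_0$, and the factorization yielding analyticity of $\lambda^\theta$ are all accurate.
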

	\begin{remark}
		As noted, Lemma~\ref{L} is close in spirit to~\cite[Theorem 42]{DDS}. However, there are some important differences. Indeed, in~\cite[Theorem 42]{DDS} we considered the case when $d=1$ and our perturbation result was stated for our original cocycle of transfer operators
		$(\mathcal L_\omega)_{\omega \in \Omega}$, while here we deal with the associated cocycle of normalized transfer operators $(L_\omega)_{\omega \in \Omega}$. 
	\end{remark}
	
	Let $\mathcal N$ consist of all measurable $\Phi \colon \Omega \to BV^*$ such that
	\[
	\| \Phi \|_{\mathcal N}:=\esssup_{\omega \in \Omega} \| \Phi_\omega \|_\omega^* <+\infty,
	\]
	where $\| \cdot \|_\omega^*$, $\omega \in \Omega$ is the family of norms given by Lemma~\ref{LL}.
	By $\mathcal N_0$ we denote the subspace of $\mathcal N$ consisting of all $\Phi \in \mathcal N$ such that 
	\[
	\Phi_\omega(\mathds 1)=0, \quad \text{for $\mathbb P$-a.e. $\omega \in \Omega$.}
	\]
	Then, it follows easily from~\eqref{dan5} that $\mathcal N_0$ is a closed subspace of $\mathcal N$.
	
	For $(\theta, \mathcal W)\in \mathbb C^d \times \mathcal S_0$, we formally define
	\[
	F^*(\theta, \Phi)_\omega=\frac{(L_\omega^\theta)^*(\Phi_{\sigma \omega}+\mu_{\sigma \omega})}{(L_\omega^\theta)^*(\Phi_{\sigma \omega}+\mu_{\sigma \omega}) (\mathds 1)}-\Phi_\omega-\mu_\omega, \ \omega \in \Omega.
	\]
	One can show that $F^*$ is well-defined and analytic on  a neighborhood of $(0, 0) \in \mathbb C^d \times \mathcal S_0$. Moreover, by arguing as in the proof of~\cite[Proposition 44]{DDS} (see also~\cite[Remark 45]{DDS}), one has:
	\begin{lemma}
		There exists a neighborhood $U'$ of $0$ in $\mathbb C^d$ and an analytic map $U'\ni \theta \mapsto \phi^\theta \in \mathcal N$ such that 
		\[
		(L_\omega^\theta)^*\phi_{\sigma \omega}^\theta=\lambda_\omega^\theta \phi_\omega^\theta, \quad \text{for $\mathbb P$-a.e. $\omega \in \Omega$ and $\theta \in U'$.}
		\]
	\end{lemma}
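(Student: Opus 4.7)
The plan is to apply the implicit function theorem to the map $F^*$ defined just above the statement, in close analogy with the proof of Lemma~\ref{L}. First I would verify that $F^*$ is well-defined and analytic from a neighborhood of $(0, 0) \in \mathbb C^d \times \mathcal N_0$ into $\mathcal N_0$. Analyticity of $\theta \mapsto L_\omega^\theta$, controlled uniformly in $\omega$ via the twist bound~\eqref{twist}, carries over to the adjoint and hence to the numerator of $F^*$; the denominator equals $\mu_\omega(L_\omega \mathds 1) = 1$ at $(0, 0)$ and stays bounded away from $0$ on a neighborhood, which yields analyticity of the quotient. The fact that $F^*$ takes values in $\mathcal N_0$ is precisely the reason for the chosen denominator: one has $F^*(\theta, \Phi)_\omega(\mathds 1) = 1 - (\Phi_\omega(\mathds 1) + 1) = 0$ whenever $\Phi \in \mathcal N_0$.

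Next I would compute the partial differential in $\Phi$ at the base point and check its invertibility. Using $L_\omega^* \mu_{\sigma \omega} = \mu_\omega$, $L_\omega \mathds 1 = \mathds 1$, and the vanishing $\Phi_{\sigma \omega}(\mathds 1) = 0$ on $\mathcal N_0$, a direct calculation yields
\[
D_2 F^*(0, 0) \Phi = \mathcal A \Phi - \Phi, \qquad (\mathcal A \Phi)_\omega := L_\omega^* \Phi_{\sigma \omega}.
\]
Since $\Pi^*(\sigma \omega)$ acts as the identity on $\mathcal N_0$, estimate~\eqref{dan2} with $n = 1$ forces $\|\mathcal A\|_{\mathcal N_0 \to \mathcal N_0} \leq e^{-\lambda'} < 1$, so $D_2 F^*(0, 0) = -(I - \mathcal A)$ is invertible by a Neumann series. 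The implicit function theorem then produces a neighborhood $U'$ of $0 \in \mathbb C^d$ and an analytic map $U' \ni \theta \mapsto \Phi^\theta \in \mathcal N_0$ with $F^*(\theta, \Phi^\theta) = 0$. Setting $\phi_\omega^\theta := \mu_\omega + \Phi_\omega^\theta$, which belongs to $\mathcal N$ by~\eqref{dan4}, the defining equation rearranges to $(L_\omega^\theta)^* \phi_{\sigma \omega}^\theta = c_\omega(\theta)\,\phi_\omega^\theta$ with $c_\omega(\theta) := \phi_{\sigma \omega}^\theta(L_\omega^\theta \mathds 1)$.

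The step I expect to be the most delicate is identifying the scalar $c_\omega(\theta)$ with the eigenvalue $\lambda_\omega^\theta$ from Lemma~\ref{L}, since the normalization $\phi_\omega^\theta(\mathds 1) = 1$ imposed by $\Phi^\theta \in \mathcal N_0$ is not compatible \emph{a priori} with the normalization $\mu_\omega(v_\omega^\theta) = 1$ of Lemma~\ref{L}. Pairing both sides of the eigenvector equation with $v_\omega^\theta$ and invoking $L_\omega^\theta v_\omega^\theta = \lambda_\omega^\theta v_{\sigma \omega}^\theta$ gives the cohomological identity
\[
\lambda_\omega^\theta h_{\sigma \omega}(\theta) = c_\omega(\theta)\, h_\omega(\theta), \qquad h_\omega(\theta) := \phi_\omega^\theta(v_\omega^\theta) = 1 + \Phi_\omega^\theta(v_\omega^\theta).
\]
Since $h^0 \equiv 1$ and the bounded bilinear pairing $\mathcal N \times \mathcal S \to L^\infty(\Omega)$ makes $\theta \mapsto h^\theta$ analytic into $L^\infty(\Omega)$, the function $h^\theta$ is uniformly bounded away from $0$ on a possibly smaller neighborhood of $0$. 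Replacing $\phi^\theta$ by the rescaling $\tilde \phi_\omega^\theta := \phi_\omega^\theta / h_\omega(\theta)$ preserves analyticity and membership in $\mathcal N$, and the above identity converts the eigenvalue into exactly $\lambda_\omega^\theta$, completing the argument.
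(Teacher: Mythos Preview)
Your proposal is correct and follows essentially the same route that the paper indicates: apply the implicit function theorem to $F^*$, exactly as in~\cite[Proposition 44]{DDS} (the paper gives no self-contained proof, only this reference together with~\cite[Remark 45]{DDS}). Your computation of $D_2F^*(0,0)=\mathcal A-\Id$ and the Neumann-series invertibility via~\eqref{dan2} are precisely the intended ingredients, and your rescaling by $h_\omega(\theta)=\phi_\omega^\theta(v_\omega^\theta)$ to identify the resulting scalar with the $\lambda_\omega^\theta$ of Lemma~\ref{L} is a clean way to fill in the detail the paper leaves implicit.
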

	By arguing  as in the proof of~\cite[Lemma 59]{DDS}, one can also establish the following result.
	\begin{lemma}\label{20}
		There exists $r\in (0, 1)$ such that for $\theta \in \mathbb C^d$ sufficiently close to $0$, $\mathbb P$-a.e. $\omega \in \Omega$, $h\in BV$ and $n\in \N$, 
		\[
		\| L_\omega^{\theta, n}(h-\phi_\omega^\theta(h)v_\omega^\theta) \|_{\sigma^n \omega} \le r^n \| h \|_\omega.
		\]
	\end{lemma}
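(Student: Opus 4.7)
The plan is to prove the lemma by a perturbative argument off the unperturbed operator $L_\omega = L_\omega^0$, for which the adapted-norm inequality~\eqref{ln2} already yields one-step exponential decay on the kernel of the spectral projection. The crux is to make the perturbation estimates uniform in $\omega$, which is precisely what the adapted norms of Lemmas~\ref{PRO} and~\ref{LL}, combined with the scaling condition~\eqref{eq:condobservable}, are designed to achieve.

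At $\theta = 0$ one has $v_\omega^0 = \mathds 1$, $\phi_\omega^0 = \mu_\omega$, $\lambda_\omega^0 = 1$, so the spectral projection reduces to $P_\omega^0 h := \phi_\omega^0(h)v_\omega^0 = \mu_\omega(h)\mathds 1$ and $h - P_\omega^0 h = \tilde\Pi(\omega) h$; in this case \eqref{ln2} directly gives the lemma with $r = e^{-\lambda'}$. To perturb, I derive two uniform estimates. Writing $L_\omega^\theta - L_\omega = L_\omega\bigl((e^{\theta\cdot \psi_\omega}-1)\cdot\bigr)$ and combining \eqref{ln4}, \eqref{ln1}, \eqref{mc} with a Taylor bound $\|e^{\theta_i\psi_\omega^i}-1\|_{BV} \le C|\theta_i|\,\|\psi_\omega^i\|_{BV}$ obtained from (V8)--(V9) exactly as in the proof of~\eqref{twist}, and then invoking $K(\omega)\|\psi_\omega^i\|_{BV}\le M$ from~\eqref{eq:condobservable}, I obtain
\[
\|(L_\omega^\theta - L_\omega)h\|_{\sigma\omega} \le C_1 |\theta|\,\|h\|_\omega, \quad |\theta|\le 1,
\]
uniformly in $\omega$. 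Second, the analyticity of $\theta\mapsto v^\theta\in \mathcal S$ and $\theta\mapsto \phi^\theta\in \mathcal N$, with $v^0 = \mathds 1$ and $\phi^0 = \mu$, together with~\eqref{dan4} and~\eqref{ln5}, yields $\|(P_\omega^\theta - P_\omega^0)h\|_\omega\le C_2|\theta|\,\|h\|_\omega$ uniformly in $\omega$, where $P_\omega^\theta h := \phi_\omega^\theta(h)v_\omega^\theta$.

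Combining these bounds with the $n=1$ case of~\eqref{ln2} gives the uniform one-step contraction
\[
\|L_\omega^\theta(h-P_\omega^\theta h)\|_{\sigma\omega}\le (e^{-\lambda'}+C_3|\theta|)\,\|h\|_\omega.
\]
The identities $L_\omega^\theta v_\omega^\theta = \lambda_\omega^\theta v_{\sigma\omega}^\theta$ and $(L_\omega^\theta)^*\phi_{\sigma\omega}^\theta = \lambda_\omega^\theta \phi_\omega^\theta$ imply $L_\omega^\theta P_\omega^\theta = P_{\sigma\omega}^\theta L_\omega^\theta$, hence $L_\omega^\theta Q_\omega^\theta = Q_{\sigma\omega}^\theta L_\omega^\theta$ with $Q^\theta := I-P^\theta$. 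Applying the one-step bound to $L_\omega^{\theta,k}(h-P_\omega^\theta h)\in \operatorname{Range}(Q_{\sigma^k\omega}^\theta)$ at each step and iterating, together with $\|Q_\omega^\theta h\|_\omega\le (1+C_2|\theta|)\|h\|_\omega$, gives
\[
\|L_\omega^{\theta,n}(h-P_\omega^\theta h)\|_{\sigma^n\omega}\le (e^{-\lambda'}+C_3|\theta|)^n(1+C_2|\theta|)\,\|h\|_\omega.
\]
Fixing any $r\in(e^{-\lambda'},1)$ and then choosing $|\theta|$ small enough so that $(1+C_2|\theta|)(e^{-\lambda'}+C_3|\theta|)\le r$ yields the announced bound for all $n\ge 1$.

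The main obstacle is uniformity in $\omega$ of the two perturbation estimates: without the adapted norms and~\eqref{eq:condobservable}, the bound on $\|(L_\omega^\theta - L_\omega)h\|_{\sigma\omega}$ would carry a factor of the tempered (but generally unbounded) random variable $K(\omega)$, and similarly for the projection difference, preventing any global contraction statement. The adapted norms of Lemmas~\ref{PRO} and~\ref{LL} absorb precisely the nonuniformity measured by $K$, while the scaling condition ensures the observable remains uniformly small in these norms, so both perturbation estimates become uniform and the iterative contraction argument goes through.
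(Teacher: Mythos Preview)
Your argument follows the standard perturbative route to which the paper defers (citing \cite[Lemma 59]{DDS} without details): perturb the one-step contraction \eqref{ln2} using uniform bounds on $L^\theta_\omega-L_\omega$ and $P^\theta_\omega-P^0_\omega$ in the adapted norms, then iterate via the intertwining $L^\theta_\omega Q^\theta_\omega=Q^\theta_{\sigma\omega}L^\theta_\omega$. Your perturbation estimates, the intertwining relation, and the final choice of $r$ are all correct and match the expected approach.

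There is, however, one genuine gap in the iteration. Your one-step bound has the form $\|L^\theta_\omega Q^\theta_\omega h\|_{\sigma\omega}\le r_0\|h\|_\omega$, and to apply it at step $k$ to $g_k:=L^{\theta,k}_\omega Q^\theta_\omega h$ so as to get $\|L^\theta_{\sigma^k\omega}g_k\|_{\sigma^{k+1}\omega}\le r_0\|g_k\|_{\sigma^k\omega}$ you need $Q^\theta_{\sigma^k\omega}g_k=g_k$. Intertwining only gives $g_k\in\operatorname{Range}(Q^\theta_{\sigma^k\omega})$, which is \emph{not} the same thing unless $Q^\theta_\omega$ is a projection, i.e.\ unless $\phi^\theta_\omega(v^\theta_\omega)=1$. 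With the normalizations actually produced by the implicit function theorem ($\int_X v^\theta_\omega\,d\mu_\omega=1$ and $\phi^\theta_\omega(\mathds 1)=1$) one only gets $\phi^\theta_\omega(v^\theta_\omega)=1+O(|\theta|^2)$, and if this quantity differs from $1$ your iteration fails (indeed $h-\phi^\theta_\omega(h)v^\theta_\omega$ then has a component along $v^\theta_\omega$ that does not decay under $L^{\theta,n}_\omega$). The fix is routine but should be stated: the two eigenequations force $\omega\mapsto\phi^\theta_\omega(v^\theta_\omega)$ to be $\sigma$-invariant, hence a.e.\ equal to a constant $c(\theta)$, analytic in $\theta$ with $c(0)=1$; for small $\theta$ one may replace $\phi^\theta$ by $\phi^\theta/c(\theta)$, which preserves analyticity in $\mathcal N$ and the eigenequation, makes $P^\theta_\omega$ a genuine rank-one projection, and renders your iteration valid verbatim.
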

	The following auxiliary result plays an important role in the proof of Theorem~\ref{TM}.
	\begin{lemma}\label{new}
		There exist constants $K, \rho>0$ such that 
		\[
		\| L_\omega^{it, n} h\|_{\sigma^n \omega} \le K\| h\|_\omega, 
		\]
		for $t\in \R^d$, $|t| \le \rho$, $\mathbb P$-a.e. $\omega \in \Omega$, $h\in BV$ and $n\in \N$.
	\end{lemma}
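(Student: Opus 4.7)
The plan is to combine the twisted spectral decomposition $(v_\omega^{it}, \phi_\omega^{it}, \lambda_\omega^{it})$ produced by Lemma~\ref{L} (and its dual version) with the geometric decay given by Lemma~\ref{20}, thereby reducing the problem to a uniform bound on the twisted products $\lambda_\omega^{it,n} := \prod_{k=0}^{n-1} \lambda_{\sigma^k\omega}^{it}$. Concretely, I choose $\rho > 0$ small enough that, for $|t| \le \rho$, the maps $\theta \mapsto v^\theta \in \mathcal S$ and $\theta \mapsto \phi^\theta \in \mathcal N$ are defined with $\|v^{it}\|_{\mathcal S}$ and $\|\phi^{it}\|_{\mathcal N}$ uniformly bounded (by analyticity), and so that Lemma~\ref{20} holds with some $r\in(0,1)$. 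For $h\in BV$, writing $h = \phi_\omega^{it}(h) v_\omega^{it} + (h - \phi_\omega^{it}(h) v_\omega^{it})$ and iterating the eigenequation $L_\omega^{it} v_\omega^{it} = \lambda_\omega^{it} v_{\sigma\omega}^{it}$ yields
\[
L_\omega^{it,n} h = \lambda_\omega^{it,n}\, \phi_\omega^{it}(h)\, v_{\sigma^n\omega}^{it} + L_\omega^{it,n}\bigl(h - \phi_\omega^{it}(h)\, v_\omega^{it}\bigr).
\]
Lemma~\ref{20} bounds the second summand by $r^n \|h\|_\omega$, while in the first summand, $\|v_{\sigma^n\omega}^{it}\|_{\sigma^n\omega} \le \|v^{it}\|_{\mathcal S}$ and $|\phi_\omega^{it}(h)| \le \|\phi^{it}\|_{\mathcal N}\,\|h\|_\omega$ are uniform in $\omega$.

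The main obstacle is therefore to bound $|\lambda_\omega^{it,n}|$ uniformly in $\omega$ and $n$; a priori these products could grow, and no spectral gap for the twisted operator is directly available from the preceding material. The key observation is that for $t\in\R^d$ one has $|e^{it\cdot\psi_\omega}|=1$ (since $\psi_\omega$ is $\R^d$-valued), and combined with the positivity of $\mathcal L_\omega$ (inherited by $L_\omega$ via the formula $L_\omega h = \mathcal L_\omega(h v_\omega^0)/v_{\sigma\omega}^0$ and the positivity of $v_\omega^0$), this gives $|L_\omega^{it} h| \le L_\omega |h|$ pointwise, hence
\[
\int_X |L_\omega^{it} h|\, d\mu_{\sigma\omega} \le \int_X L_\omega(|h|)\, d\mu_{\sigma\omega} = \int_X |h|\, d\mu_\omega,
\]
where the last equality is the equivariance $\int L_\omega\phi\, d\mu_{\sigma\omega} = \int\phi\, d\mu_\omega$ (a direct consequence of $\mu_\omega = v_\omega^0\, dm$ and the duality defining $\mathcal L_\omega$). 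Applying this $L^1(\mu)$-contraction to $h = v_\omega^{it}$ and telescoping the resulting inequalities gives
\[
|\lambda_\omega^{it,n}| \le \frac{\|v_\omega^{it}\|_{L^1(\mu_\omega)}}{\|v_{\sigma^n\omega}^{it}\|_{L^1(\mu_{\sigma^n\omega})}}.
\]

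To close the argument, I bound numerator and denominator uniformly. The numerator is handled by the chain
\[
\|v_\omega^{it}\|_{L^1(\mu_\omega)} \le \|v_\omega^{it}\|_\infty \le C_{\var} \|v_\omega^{it}\|_{BV} \le C_{\var} \|v_\omega^{it}\|_\omega \le C_{\var} \|v^{it}\|_{\mathcal S},
\]
using that $\mu_\omega$ is a probability measure, axiom (V3), the lower bound in \eqref{ln1}, and the definition of $\|\cdot\|_{\mathcal S}$. The denominator is bounded below by the triangle inequality together with the normalization $\int_X v_{\sigma^n\omega}^{it}\, d\mu_{\sigma^n\omega} = 1$ from Lemma~\ref{L}:
\[
\|v_{\sigma^n\omega}^{it}\|_{L^1(\mu_{\sigma^n\omega})} \ge \Bigl|\int_X v_{\sigma^n\omega}^{it}\, d\mu_{\sigma^n\omega}\Bigr| = 1.
\]
Hence $|\lambda_\omega^{it,n}| \le C_{\var}\|v^{it}\|_{\mathcal S}$ uniformly, and collecting the three contributions produces the claim with $K := C_{\var}\|v^{it}\|_{\mathcal S}^2\,\|\phi^{it}\|_{\mathcal N} + 1$, which is finite provided $\rho$ was chosen small enough.
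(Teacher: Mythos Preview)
Your proof is correct. The overall architecture matches the paper's: decompose $h$ along the rank-one projection $\phi_\omega^{it}(\cdot)v_\omega^{it}$, control the complementary part via Lemma~\ref{20}, and then reduce everything to a uniform bound on the product $\lambda_\omega^{it,n}=\prod_{k=0}^{n-1}\lambda_{\sigma^k\omega}^{it}$.

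Where you genuinely diverge from the paper is in how you obtain that uniform bound on $|\lambda_\omega^{it,n}|$. The paper argues through the characteristic-function identity
\[
\int_X e^{it\cdot S_n\psi(\omega,\cdot)}\,d\mu_\omega
=\int_X L_\omega^{it,n}(\mathds 1-v_\omega^{it})\,d\mu_{\sigma^n\omega}+\prod_{j=0}^{n-1}\lambda_{\sigma^j\omega}^{it},
\]
uses $\bigl|\int e^{it\cdot S_n\psi}\,d\mu_\omega\bigr|\le 1$, and controls the first integral by $C_{\var}\|\mathds 1-v^{it}\|_{\mathcal S}$, which is small for small $|t|$; this yields $|\lambda_\omega^{it,n}|\le 2$ once $\rho$ is chosen appropriately. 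Your route is instead a direct $L^1(\mu)$-contraction argument: from positivity of $L_\omega$ and $|e^{it\cdot\psi_\omega}|=1$ you get $|L_\omega^{it}h|\le L_\omega|h|$, hence $|\lambda_\omega^{it}|\,\|v_{\sigma\omega}^{it}\|_{L^1(\mu_{\sigma\omega})}\le \|v_\omega^{it}\|_{L^1(\mu_\omega)}$, and the normalization $\int v_\omega^{it}\,d\mu_\omega=1$ bounds the denominator below by $1$ after telescoping. This is more elementary in that it avoids the characteristic-function computation and uses only positivity and the normalization built into Lemma~\ref{L}; the paper's version, on the other hand, makes the role of the ``physical'' object $\int e^{it\cdot S_n\psi}\,d\mu_\omega$ transparent and gives the explicit numerical bound $2$ rather than $C_{\var}\sup_{|t|\le\rho}\|v^{it}\|_{\mathcal S}$. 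One cosmetic point: your displayed constant $K$ still carries a $t$; you should replace $\|v^{it}\|_{\mathcal S}$ and $\|\phi^{it}\|_{\mathcal N}$ by their suprema over $|t|\le\rho$, as you already noted is possible by analyticity.
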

	
	\begin{proof}
		
		We have that
		\[
		\begin{split}
			\int_X e^{it \cdot S_n \psi (\omega, \cdot) }\,  d\mu_\omega &= \int_X L^{it, n}_{\omega} \mathds 1 \, d\mu_{\sigma^n \omega} \\
			&=\int_X L_{\omega}^{it, n}(\mathds 1-v^{it}_{\omega}) \, d\mu_{\sigma^n \omega} + \int_X L_{\omega}^{it, n}v^{it}_{\omega} \, d\mu_{\sigma^n \omega}\\
			&= \int_X L_{\omega}^{it, n}(\mathds 1-v^{it}_{\omega}) \, d\mu_{\sigma^n \omega} +\prod_{j=0}^{n-1}\lambda^{it}_{\sigma^j\omega}\int_X v^{it}_{\omega} \, d\mu_\omega\\
			&=  \int_X L_{\omega}^{it, n}(\mathds 1-v^{it}_{\omega}) \, d\mu_{\sigma^n \omega} +\prod_{j=0}^{n-1}\lambda^{it}_{\sigma^j\omega},
		\end{split}
		\]
		where $S_n \psi$ is given by~\eqref{RBS}. Thus,
		\[
		\begin{split}
			\bigg |\prod_{j=0}^{n-1}\lambda^{it}_{\sigma^j\omega}\bigg | &\le \bigg |\int_X L_{\omega}^{it, n}(\mathds 1-v^{it}_{\omega}) \, d\mu_{\sigma^n \omega} \bigg | +\bigg |\int_X e^{it \cdot S_n \psi (\omega, \cdot) }\,  d\mu_\omega \bigg | \\
			&\le 1+\bigg | \int_X e^{it\cdot S_n \psi (\omega, \cdot)}(\mathds 1-v^{it}_{\omega}) \, d\mu_{ \omega} \bigg |  \\
			&\le 1+\|\mathds 1-v^{it}_\omega\|_{L^1(\mu_\omega)} \\
			&\le 1+ C_{\var}\|\mathds 1-v^{it}_\omega\|_{BV} \\
			&\le 1+C_{\var} \|\mathds 1-v^{it}_\omega\|_{\omega}\\
			&\le 1+C_{\var} \|\mathds 1-v^{it} \|_{\mathcal S},
		\end{split}
		\]
		from which it follows that there exists $\rho>0$ such that 
		\begin{equation}\label{t}
			\bigg |\prod_{j=0}^{n-1}\lambda^{it}_{\sigma^j\omega}\bigg | \le 2, \quad \text{for $\mathbb P$-a.e. $\omega \in \Omega$, $n\in \N$ and $t\in \R^d$, $|t| \le \rho$.}
		\end{equation}
		
		On the other hand, we have that 
		\[
		\begin{split}
			L_\omega^{it, n}h &= L_\omega^{it, n}(h-\phi_\omega^{it}(h)v_\omega^{it})+\phi_\omega^{it}(h)L_\omega^{it, n}v_\omega^{it} \\
			&=L_\omega^{it, n}(h-\phi_\omega^{it}(h)v_\omega^{it})+ \phi_\omega^{it}(h)\bigg ( \prod_{j=0}^{n-1} \lambda_{\sigma^j \omega}^{it} \bigg )v_{\sigma^n \omega}^{it},
		\end{split}
		\]
		and thus
		\begin{equation}\label{t1}
			\|L_\omega^{it, n}h \|_{\sigma^n \omega} \le \|L_\omega^{it, n}(h-\phi_\omega^{it}(h)v_\omega^{it}) \|_{\sigma^n \omega}+\|\phi^{it}\|_{\mathcal N}\cdot\bigg |\prod_{j=0}^{n-1}\lambda^{it}_{\sigma^j\omega}\bigg | \cdot \|v^{it}\|_{\mathcal S}\|h\|_\omega,
		\end{equation}
		for $\mathbb P$-a.e. $\omega \in \Omega$, $n\in \N$, $h\in BV$ and $t\in \R^d$ sufficiently close to $0$. The desired conclusion follows directly from~\eqref{t} and~\eqref{t1} together with Lemma~\ref{20} and the continuity of $t\to \|\phi^{it}\|_{\mathcal N}$.
	\end{proof}
	
	\subsection{Completing the proof of Theorem~\ref{TM}}
	In order to establish the first assertion of Theorem~\ref{TM}, we will rely on  the arguments in~\cite{DFGTV1, DH3, DDS}.
	By $\mathbb E_\omega (\phi)$ we will denote $\int_X \phi\, d\mu_\omega$. In addition, let $\tau$ and $\mu$ be as in~\eqref{SP} and~\eqref{mu}, respectively.

	Firstly,  as in  \cite[Proposition 4.17]{DH3}, by replacing $\psi$ with $\psi\cdot v$ for an arbitrary unit vector $v$, it is enough to consider scalar valued functions $\psi$.
	In the scalar case, by using~\eqref{EQ} and arguing as in the proof of~\cite[Lemma 12]{DFGTV1}, we find that
	\begin{small}
		\[
		\mathbb E_\omega \bigg{(}\sum_{k=0}^{n-1}  \psi_{\sigma^k \omega} \circ T_\omega^k \bigg{)}^2=\sum_{k=0}^{n-1} \mathbb E_\omega ( (\psi_{\sigma^k \omega})^2\circ T_\omega^k )
		+2\sum_{i=0}^{n-1}\sum_{j=i+1}^{n-1}\mathbb E_{\sigma^i \omega}( \psi_{\sigma^i \omega}( \psi_{\sigma^j \omega}\circ T_{\sigma^i \omega}^{j-i}))
		\]
	\end{small}
	and 
	\[
	\lim_{n\to \infty} \frac 1 n  \sum_{k=0}^{n-1}  \mathbb E_\omega (( \psi_{\sigma^k \omega})^2 \circ T_\omega^k )=\int_{\Omega \times X}  \psi(\omega, x)^2\, d\mu (\omega, x),
	\]
	for $\mathbb P$-a.e. $\omega \in \Omega$. Set 
	\[
	\Psi(\omega)=\sum_{n=1}^\infty \int_X   \psi(\omega, x) \psi(\tau^n (\omega, x))\, d\mu_\omega(x)=\sum_{n=1}^\infty \int_X L_\omega^n(\psi_\omega) \psi_{\sigma^n \omega} \, d\mu_{\sigma^n \omega}.
	\]
	By~\eqref{ln1}, \eqref{ln2}  and~\eqref{ln3}, we have that 
	\begin{equation}\label{eq:same}
		\begin{split}
			\left| \int_X L_\omega^n(\psi_\omega)\psi_{\sigma^n\omega} d\mu_{\sigma^n\omega}\right|  &\le \| L_\omega^n(\psi_\omega)\cdot\psi_{\sigma^n \omega}\|_{\sigma^n\omega} \\
                        &\le  K(\sigma^n \omega) \|L_\omega^n(\psi_\omega)\cdot\psi_{\sigma^n \omega} \|_{BV} \\
			&\le C_{var}K(\sigma^n\omega) \| L_\omega^n(\psi_\omega) \|_{BV} \cdot \|\psi_{\sigma^n \omega}\|_{BV} \\
                       &\le C_{var}K(\sigma^n\omega) \| L_\omega^n(\psi_\omega) \|_{\sigma^n \omega} \cdot \|\psi_{\sigma^n \omega}\|_{BV} \\
			&\le C_{var}K(\sigma^n\omega)e^{-\lambda' n} \| \psi_\omega \|_\omega \cdot \|\psi_{\sigma^n \omega}\|_{BV}\\
			&\le C_{var}e^{-\lambda' n}K(\omega)\|\psi_\omega\|_{BV}K(\sigma^n\omega)\|\psi_{\sigma^n\omega}\|_{BV}, 
		\end{split}
	\end{equation}
for $\mathbb P$-a.e. $\omega \in \Omega$ and $n\in \N$.
	Therefore by \eqref{eq:condobservable} we get  that $\esssup_{\omega \in \Omega} |\Psi(\omega)|<+\infty$. Thus, $\Psi \in L^1(\Omega, \mathcal F, \mathbb P)$. Hence, it follows from Birkhoff's ergodic theorem that, for $\mathbb P$  a.e. $\omega \in \Omega$,
	\begin{align}\label{DD}
		\notag\lim_{n\to \infty}\frac{1}{n} \sum_{i=0}^{n-1}\Psi(\sigma^i \omega)&=\int_\Omega \Psi(\omega)\, d\mathbb P(\omega)\\
		&=\sum_{n=1}^\infty \int_{ \Omega \times X}\psi (\omega, x)\psi (\tau^n(\omega, x))\, d\mu (\omega, x).
	\end{align}
	Moreover,  we have that 
	\[
	\begin{split}
		&  \bigg{\lvert} \sum_{i=0}^{n-1}\sum_{j=i+1}^{n-1}\mathbb E_{\sigma^i \omega}( \psi_{\sigma^i \omega}( \psi_{\sigma^j \omega}\circ T_{\sigma^i \omega}^{j-i}))-\sum_{i=0}^{n-1}\Psi(\sigma^i \omega) \bigg{\rvert} \\
		& \le \sum_{i=0}^{n-1}\sum_{k=n-i}^\infty  \bigg{\lvert}\int_X L_{\sigma^i \omega}^k( \psi_{\sigma^i \omega} )  \psi_{\sigma^{k+i} \omega}\, d\mu_{\sigma^{k+i} \omega} \bigg{\rvert}  \\
		&\le C_{\var} \left(\esssup_{\omega \in \Omega} K(\omega)\|\psi_\omega\|_{BV}\right)^2 \sum_{i=0}^{n-1}\sum_{k=n-i}^\infty e^{-\lambda' k}.
	\end{split} 
	\]
	We thus derive that
	\begin{equation}\label{cv}
		\lim_{n\to \infty} \frac 1 n \bigg{(}\sum_{i=0}^{n-1}\sum_{j=i+1}^{n-1}\mathbb E_{\sigma^i \omega}(\psi_{\sigma^i \omega}( \psi_{\sigma^j \omega}\circ T_{\sigma^i \omega}^{j-i}))-\sum_{i=0}^{n-1}\Psi(\sigma^i \omega)\bigg{)}=0,
	\end{equation}
	It follows from~\eqref{DD} and~\eqref{cv} that
	\begin{small}
		\[
		\lim_{n \to \infty} \frac 1 n \sum_{i=0}^{n-1}\sum_{j=i+1}^{n-1}\mathbb E_{\sigma^i \omega}(\psi_{\sigma^i \omega}( \psi_{\sigma^j \omega}\circ T_{\sigma^i \omega}^{j-i}))=\sum_{n=1}^\infty \int_{ \Omega \times X} \psi(\omega, x)
		\psi(\tau^n(\omega, x))\, d\mu (\omega, x),
		\]
	\end{small}
	for $\mathbb P$-a.e. $\omega \in \Omega$. Hence, we conclude that 
	\[
	\begin{split}
		\lim_{n\to \infty} \frac 1 n \mathbb E_\omega \bigg{(}\sum_{k=0}^{n-1}  \psi_{\sigma^k \omega} \circ T_\omega^k \bigg{)}^2 &=\int_{\Omega \times X}  \psi(\omega, x)^2\, d\mu (\omega, x) \\
		&\phantom{=}+2\sum_{n=1}^\infty \int_{ \Omega \times X} \psi(\omega, x) 
		\psi(\tau^n(\omega, x))\, d\mu (\omega, x) \\
		&=:\Sigma^2 \ge 0,
	\end{split}
	\]
	for $\mathbb P$-a.e. $\omega \in \Omega$. In order to show that $\Sigma^2=0$ if and only if $\psi$ is a coboundary with $r\in L^2(\Omega\times X,\mu)$ (see~\eqref{Cob}), one can argue as in~\cite{DFGTV1}.
	The proof that we can get \eqref{Cob} with a function $r$ so that $\esssup_{\omega\in\omega}\|r(\omega,\cdot)\|_{BV}<\infty$ is postponed to the next section, see Lemma \ref{BVcob} (note that it is enough to consider real valued functions $\psi$).

	We now establish the second assertion of Theorem~\ref{TM}. In the following lemma, we verify~\cite[condition (2.1)]{DH3}.

	\begin{lemma}
		There exist constants $C,c,\rho>0$ such that for $\mathbb P$-a.e. $\omega \in \Omega$,  any $n,m>0$, $b_1<b_2< \ldots <b_{n+m+1}$, $k>0$ and $t_1,\ldots ,t_{n+m}\in\mathbb R^d$ with $|t_j|\leq \rho$, we have that
		\begin{equation}\label{EQQ}
			\begin{split}
				\Big|\mathbb E_\omega&\big(e^{i\sum_{j=1}^nt_j \cdot (\sum_{\ell=b_j}^{b_{j+1}-1}A_\ell)+i\sum_{j=n+1}^{n+m}t_j \cdot (\sum_{\ell=b_j+k}^{b_{j+1}+k-1}A_\ell)}\big)\\
				&-\mathbb E_\omega\big(e^{i\sum_{j=1}^nt_j \cdot (\sum_{\ell=b_j}^{b_{j+1}-1}A_\ell)}\big)\cdot\mathbb E_\omega\big(e^{i\sum_{j=n+1}^{n+m}t_j \cdot (\sum_{\ell=b_j+k}^{b_{j+1}+k-1}A_\ell)}\big)\Big|\\
				&\leq C^{n+m} e^{-ck},
			\end{split}
		\end{equation}
		where \[A_\ell:=\psi_{\sigma^\ell \omega} \circ T_\omega^\ell, \quad \ell\in \N. \]
	\end{lemma}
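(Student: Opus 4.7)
The plan is to express each of the three expectations in~\eqref{EQQ} as an integral (with respect to the invariant measure $\mu_{\sigma^N\omega}$, where $N:=b_{n+m+1}+k$) of a composition of normalized transfer operators: \emph{twisted} on each block $[b_j,b_{j+1})$ via the parameter $t_j$ (using $L_\omega^{it_j}\phi=L_\omega(e^{it_j\cdot\psi_\omega}\phi)$), and \emph{untwisted} (i.e. $L_\omega$) on the gap $[b_{n+1},b_{n+1}+k)$ and on any indices outside the union of blocks. Iterating the duality relation $\int f\circ T_\omega\cdot g\,d\mu_\omega=\int f\cdot L_\omega g\,d\mu_{\sigma\omega}$ gives
\[
\mathbb{E}_\omega(e^{i(P_1+Q_1)})=\int \mathcal{Q}_{\sigma^{b_{n+1}+k}\omega}\,L^k_{\sigma^{b_{n+1}}\omega}\,\mathcal{P}_\omega\mathds{1}\,d\mu_{\sigma^N\omega},
\]
where $P_1,Q_1$ denote the two sums in the exponents, and $\mathcal{P}_\omega$, $\mathcal{Q}_{\sigma^{b_{n+1}+k}\omega}$ are the compositions of (twisted/untwisted) normalized transfer operators corresponding to the first $n$ and last $m$ blocks, respectively. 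In the same way, $\mathbb{E}_\omega(e^{iP_1})=\int \mathcal{P}_\omega\mathds{1}\,d\mu_{\sigma^{b_{n+1}}\omega}$ and $\mathbb{E}_\omega(e^{iQ_1})=\int \mathcal{Q}_{\sigma^{b_{n+1}+k}\omega}\mathds{1}\,d\mu_{\sigma^N\omega}$.

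The second step is to decompose $\mathcal{P}_\omega\mathds{1}=a\mathds{1}+h$, where $a:=\int \mathcal{P}_\omega\mathds{1}\,d\mu_{\sigma^{b_{n+1}}\omega}=\mathbb{E}_\omega(e^{iP_1})$ and $h\in BV$ has zero mean with respect to $\mu_{\sigma^{b_{n+1}}\omega}$. Since $L^k_{\sigma^{b_{n+1}}\omega}\mathds{1}=\mathds{1}$, the contribution of $a\mathds{1}$ is exactly $\mathbb{E}_\omega(e^{iP_1})\cdot\mathbb{E}_\omega(e^{iQ_1})$, and the decorrelation error reduces to
\[
\mathcal{E}:=\int \mathcal{Q}_{\sigma^{b_{n+1}+k}\omega}\,L^k_{\sigma^{b_{n+1}}\omega}h\,d\mu_{\sigma^N\omega}.
\]

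The third step is to use the spectral gap on the gap of length $k$. Because $h$ is centered, Lemma~\ref{PRO}(2) yields
\[
\|L^k_{\sigma^{b_{n+1}}\omega}h\|_{\sigma^{b_{n+1}+k}\omega}\le e^{-\lambda' k}\,\|h\|_{\sigma^{b_{n+1}}\omega}.
\]
The fourth step controls the twisted blocks uniformly. Applying Lemma~\ref{new} separately on each of the $n$ (respectively $m$) maximal intervals of constant twist inside $\mathcal{P}_\omega$ (resp.\ $\mathcal{Q}_{\sigma^{b_{n+1}+k}\omega}$), and using $\|L_\omega\phi\|_{\sigma\omega}\le\|\phi\|_\omega$ (Lemma~\ref{PRO}(4)) to absorb untwisted factors between blocks, we obtain, for $|t_j|\le\rho$ where $\rho$ is as in Lemma~\ref{new},
\[
\|\mathcal{P}_\omega\mathds{1}\|_{\sigma^{b_{n+1}}\omega}\le K^n\|\mathds{1}\|_\omega=K^n,\qquad \|\mathcal{Q}_{\sigma^{b_{n+1}+k}\omega}\phi\|_{\sigma^N\omega}\le K^m\|\phi\|_{\sigma^{b_{n+1}+k}\omega}.
\]
In particular $\|h\|_{\sigma^{b_{n+1}}\omega}\le 2K^n$. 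Combining with $|\int\phi\,d\mu_\omega|\le\|\phi\|_\omega$ from Lemma~\ref{PRO}(3), we conclude
\[
|\mathcal{E}|\le \|\mathcal{Q}_{\sigma^{b_{n+1}+k}\omega}L^k_{\sigma^{b_{n+1}}\omega}h\|_{\sigma^N\omega}\le K^m\cdot e^{-\lambda' k}\cdot 2K^n=2K^{n+m}e^{-\lambda' k},
\]
which gives~\eqref{EQQ} with $C:=2K$ and $c:=\lambda'$.

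The main obstacle is the first/fourth step: setting up the compositions cleanly when the twist parameter switches from one block to the next, and verifying that Lemma~\ref{new}, which gives a single constant $K$ independent of block length, can be iterated across the $n+m$ blocks without accumulating extra $\omega$-dependent factors. Because Lemma~\ref{new} is formulated in the adapted norms $\|\cdot\|_\omega$ (which are not uniformly equivalent to $\|\cdot\|_{BV}$, only tempered), all norm bookkeeping must take place in those adapted norms; this is exactly why the construction of the $\|\cdot\|_\omega$ in Lemma~\ref{PRO} was carried out and why the statement above is uniform in $\omega$.
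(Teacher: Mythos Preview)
Your proposal is correct and follows essentially the same route as the paper. The only cosmetic difference is that the paper splits the \emph{operator} on the gap as $L_{\sigma^{b_{n+1}}\omega}^{k}=Q_{\sigma^{b_{n+1}}\omega}+(L_{\sigma^{b_{n+1}}\omega}^{k}-Q_{\sigma^{b_{n+1}}\omega})$, where $Q_\omega h=(\int h\,d\mu_\omega)\mathds 1$, whereas you split the \emph{input function} $\mathcal{P}_\omega\mathds 1=a\mathds 1+h$; since $(L^k-Q)\phi=L^k\tilde\Pi\phi=L^k h$, the two decompositions coincide, and the remaining estimates (Lemma~\ref{new} iterated blockwise, together with~\eqref{ln2},~\eqref{ln3},~\eqref{ln5}) are identical.
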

	
	\begin{proof}
		Set
		\[
		Q_w h=\bigg (\int_X h\, d\mu_\omega \bigg )\mathds 1, \quad \text{for $\omega \in \Omega$ and $h\in BV$.}
		\]
		We have that 
		\[
		\begin{split}
			& \mathbb E_\omega\big(e^{i\sum_{j=1}^nt_j \cdot (\sum_{\ell=b_j}^{b_{j+1}-1}A_\ell)+i\sum_{j=n+1}^{n+m}t_j \cdot (\sum_{\ell=b_j+k}^{b_{j+1}+k-1}A_\ell)}\big) \\
			&= \mathbb E_ {\sigma^{b_{n+m+1}+k} \omega} \bigg (\prod_{j=n+1}^{n+m}L_{\sigma^{b_{j}+k}\omega}^{it_{j}, b_{j+1}-b_{j}}                       L_{\sigma^{b_{n+1}} \omega}^{k}\prod_{j=1}^{n}L_{\sigma^{b_j}\omega}^{it_j, b_{j+1}-b_j}(\mathds 1) \bigg ) \\
			&= \mathbb E_ {\sigma^{b_{n+m+1}+k} \omega} \bigg (\prod_{j=n+1}^{n+m}L_{\sigma^{b_{j}+k}\omega}^{it_{j}, b_{j+1}-b_{j}}(L_{\sigma^{b_{n+1}} \omega}^{k}-Q_{\sigma^{b_{n+1}} \omega})\prod_{j=1}^{n}L_{\sigma^{b_j}\omega}^{it_j, b_{j+1}-b_j}(\mathds 1) \bigg ) \\
			&\phantom{=}+\mathbb E_ {\sigma^{b_{n+m+1}+k} \omega} \bigg (\prod_{j=n+1}^{n+m}L_{\sigma^{b_{j}+k}\omega}^{it_{j}, b_{j+1}-b_{j}}Q_{\sigma^{b_{n+1}} \omega}\prod_{j=1}^{n}L_{\sigma^{b_j}\omega}^{it_j, b_{j+1}-b_j}(\mathds 1) \bigg ).
		\end{split}
		\]
		It follows from~\eqref{ln2}, \eqref{ln3}, \eqref{ln5}  and Lemma~\ref{new} that 
		\[
		\begin{split}
			&\bigg |\mathbb E_ {\sigma^{b_{n+m+1}+k} \omega} \bigg ( \prod_{j=n+1}^{n+m}L_{\sigma^{b_{j}+k}\omega}^{it_{j}, b_{j+1}-b_{j}}(L_{\sigma^{b_{n+1}} \omega}^{k}-Q_{\sigma^{b_{n+1}} \omega}) \prod_{j=1}^{n}L_{\sigma^{b_j}\omega}^{it_j, b_{j+1}-b_j}(\mathds 1)\bigg )  \bigg | \\
			&\le K^{n+m} e^{-\lambda'k} \esssup_{\omega \in \Omega}\|\mathds 1\|_\omega \\
			&=K^{n+m} e^{-\lambda'k}.
		\end{split}
		\]
		The conclusion of the lemma follows from an observation that 
		\[
		\begin{split}
			&\mathbb E_ {\sigma^{b_{n+m+1}+k} \omega} \bigg (\prod_{j=n+1}^{n+m}L_{\sigma^{b_{j}+k}\omega}^{it_{j}, b_{j+1}-b_{j}} Q_{\sigma^{b_{n+1}} \omega} \prod_{j=1}^{n}L_{\sigma^{b_j}\omega}^{it_j, b_{j+1}-b_j}(\mathds 1) \bigg )\\
			&\phantom{+}=\mathbb E_\omega\big(e^{i\sum_{j=1}^nt_j \cdot (\sum_{\ell=b_j}^{b_{j+1}-1}A_\ell)}\big)\cdot\mathbb E_\omega\big(e^{i\sum_{j=n+1}^{n+m}t_j \cdot (\sum_{\ell=b_j+k}^{b_{j+1}+k-1}A_\ell)}\big).
		\end{split}
		\]
	\end{proof}
	
	Next we verify~\cite[condition (2.5)]{DH3}.
	\begin{lemma}
		There exist constants $C_0>0$ and $r\in (0, 1)$ such that 
		\[
		|\Cov (A_n \cdot v, A_{n+k} \cdot v)| \le C_0 r^k,
		\]
		for $\mathbb P$-a.e. $\omega \in \Omega$, $n, k\in \N$ and $v\in \R^d$ such that $|v|= 1$, where $A_n=\psi_{\sigma^n\omega} \circ T_\omega^n$.
	\end{lemma}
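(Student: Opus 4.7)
The plan is to reduce the estimate to a shifted version of the bound already obtained in~\eqref{eq:same}. First I would set $\phi_\omega := \psi_\omega \cdot v$, a scalar-valued observable lying in $BV$ with $\|\phi_\omega\|_{BV}\le \sum_{i=1}^d \|\psi_\omega^i\|_{BV}$ (since $|v|=1$) and satisfying $\int \phi_\omega \,d\mu_\omega = 0$ by the centering assumption \eqref{center}. The equivariance relation $(T_\omega)_\ast \mu_\omega = \mu_{\sigma \omega}$ then gives $\mathbb{E}_\omega(A_n\cdot v) = \int \phi_{\sigma^n \omega}\,d\mu_{\sigma^n\omega}=0$, so $\Cov(A_n\cdot v, A_{n+k}\cdot v) = \mathbb{E}_\omega\big((A_n\cdot v)(A_{n+k}\cdot v)\big)$.

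The next step is to rewrite this expectation in a form amenable to the adapted-norm machinery. Applying equivariance with $y = T_\omega^n x$ followed by the $\mu$-duality of the normalized transfer operator (i.e.\ $\int L_\omega f\cdot g\,d\mu_{\sigma\omega} = \int f\cdot (g\circ T_\omega)\,d\mu_\omega$, which follows directly from the definition $L_\omega h = \mathcal L_\omega(h v_\omega^0)/v_{\sigma\omega}^0$ and the usual duality of $\mathcal L_\omega$ against $m$), yields
\[
\mathbb{E}_\omega\big((A_n\cdot v)(A_{n+k}\cdot v)\big) = \int_X L_{\sigma^n\omega}^k(\phi_{\sigma^n\omega})\cdot \phi_{\sigma^{n+k}\omega}\,d\mu_{\sigma^{n+k}\omega}.
\]

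At this point I would simply run the chain of inequalities from~\eqref{eq:same}, with $\omega$ replaced by $\sigma^n\omega$: estimate the integral by the adapted norm via \eqref{ln3}, pass to $\|\cdot\|_{BV}$ using \eqref{ln1}, split the product using \eqref{mc}, come back to the adapted norm to apply \eqref{ln2} (here the zero-mean property of $\phi_{\sigma^n\omega}$ is crucial, so $L_{\sigma^n\omega}^k\phi_{\sigma^n\omega} = L_{\sigma^n\omega}^k\tilde\Pi(\sigma^n\omega)\phi_{\sigma^n\omega}$), and then apply \eqref{ln1} once more. The outcome is
\[
\big|\Cov(A_n\cdot v,\, A_{n+k}\cdot v)\big| \le C_{\var}\, e^{-\lambda' k}\cdot K(\sigma^n\omega)\|\phi_{\sigma^n\omega}\|_{BV}\cdot K(\sigma^{n+k}\omega)\|\phi_{\sigma^{n+k}\omega}\|_{BV}.
\]
Using the bound $\|\phi_\omega\|_{BV}\le \sum_i \|\psi_\omega^i\|_{BV}$ together with the scaling assumption \eqref{eq:condobservable}, each factor $K(\sigma^m\omega)\|\phi_{\sigma^m\omega}\|_{BV}$ is uniformly bounded by $d\cdot M$ with $M := \esssup_{\omega}\max_i K(\omega)\|\psi_\omega^i\|_{BV} < \infty$. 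This yields the conclusion with $C_0 := C_{\var}(dM)^2$ and $r := e^{-\lambda'}\in(0,1)$, uniformly in $\omega$, $n$ and the unit vector $v$.

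I do not anticipate any serious obstacle: the whole point of the adapted-norm construction of Lemma~\ref{PRO} is precisely to convert nonuniform decay of correlations into a uniform bound via \eqref{eq:condobservable}, so all the technical work has been done upstream. The only verification that deserves care is the duality $\int L_\omega f\cdot g\,d\mu_{\sigma\omega} = \int f\cdot (g\circ T_\omega)\,d\mu_\omega$ used to rewrite the covariance; once this identity is in place, the estimate is a direct repetition of \eqref{eq:same} at the shifted base point $\sigma^n\omega$.
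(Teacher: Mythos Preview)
Your proposal is correct and follows essentially the same approach as the paper: rewrite the covariance via equivariance and the duality of the normalized transfer operator, then apply the estimate~\eqref{eq:same} at the shifted base point $\sigma^n\omega$ and conclude with~\eqref{eq:condobservable}. The only cosmetic difference is that you work directly with the scalar observable $\phi_\omega=\psi_\omega\cdot v$, whereas the paper expands the bilinear form componentwise as $\sum_{i,j} v_i v_j \int L_{\sigma^n\omega}^k(\psi_{\sigma^n\omega}^i)\psi_{\sigma^{n+k}\omega}^j\,d\mu_{\sigma^{n+k}\omega}$ before applying the same bound to each term.
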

	
	\begin{proof}
		We have that 
		\begin{equation}\label{W1}
			\begin{split}
				\Cov (A_n \cdot v, A_{n+k} \cdot v) &=\sum_{1\le i, j\le d} \int_X v_i v_j \psi_{\sigma^n \omega}^i \circ T_\omega^n \cdot  \psi_{\sigma^{n+k} \omega}^j \circ T_\omega^{n+k} \, d\mu_\omega  \\
				&=\sum_{1\le i, j\le d} v_i v_j\int_X  \psi_{\sigma^n \omega}^i \cdot \psi_{\sigma^{n+k} \omega}^j \circ T_{\sigma^n \omega}^{k} \, d\mu_{\sigma^n \omega} \\
				&=\sum_{1\le i, j\le d} v_i v_j\int_X L_{\sigma^n \omega}^k(\psi_{\sigma^n \omega}^i ) \psi_{\sigma^{n+k} \omega}^j\, d\mu_{\sigma^{n+k} \omega}.
			\end{split}
		\end{equation}
		The same computation as \eqref{eq:same} now yields
		\begin{equation}\label{W2}
			\begin{split}
				\bigg |\int_X L_{\sigma^n \omega}^k(\psi_{\sigma^n \omega}^i )& \psi_{\sigma^{n+k} \omega}^j\, d\mu_{\sigma^{n+k} \omega}\bigg |\\ 
				&\le  C_{var} e^{-\lambda' k} K(\sigma^n\omega)\| \psi_{\sigma^n \omega}^i\|_{BV}K(\sigma^{n+k}\omega)\| \psi_{\sigma^{n+k} \omega}^j \|_{BV},
			\end{split}
		\end{equation}
		which, together with \eqref{eq:condobservable} and \eqref{W1} imply the conclusion of the lemma.
	\end{proof}
	
	The conclusion of  Theorem~\ref{TM} now follows directly from the previous two lemmas by applying the abstract version of ASIP given in~\cite[Theorem 2.1]{DH3}.
	
	\begin{remark}\label{ej}
		We can now explain the reason for introducing adapted norms. We first observe that it follows from~\eqref{ln1} and Lemma~\ref{new} that 
		\begin{equation}\label{441}
			\| L_\omega^{it, n} \|_{BV} \le M(\omega) \quad \text{for $t\in \R^d$, $|t| \le \rho$, $n\in \N$ and $\mathbb P$-a.e. $\omega \in \Omega$,}
		\end{equation}
		where $M\colon \Omega \to (0, +\infty)$ is a tempered random variable.  By  relying solely  on~\eqref{est11} and~\eqref{441}, we observe that the L.H.S in~\eqref{EQQ} can be bounded by 
		\[
		\bigg (\prod_{j=n+1}^{n+m} M(\sigma^{b_{j}+k}\omega) \bigg ) \tilde D(\sigma^{b_{n+1}} \omega)e^{-\lambda' k} \bigg (\prod_{j=1}^{n}M(\sigma^{b_j}\omega) \bigg ).
		\]
		Note that even in the case when $M$ is a  constant, the above expression depends on $\omega$ and thus~\cite[Theorem 2.1]{DH3} is not directly applicable.  Our construction of adapted norms is precisely tailored to overcome this difficulty. 
		
	\end{remark}

	\section{A scalar-valued almost sure invariance principle}\label{S5}
	In this section we will improve the rates obtained in the previous section for a class of real valued observables. In order to achieve that, we need to impose an additional requirement. More precisely, we suppose that there exists a tempered random variable $N\colon \Omega \to (0, +\infty)$ such that 
	\begin{equation}\label{Nom}
		\| g\circ T_\omega \|_{BV} \le N(\omega) \| g\|_{BV}, \quad \text{for $\mathbb P$-a.e. $\omega \in \Omega$ and $g\in BV$.}
	\end{equation}
	\begin{remark}
		In the setting of Example~\ref{ex:good}, the above condition is satisfied whenever the number $N_\omega$ of monotonicity intervals is tempered.
	\end{remark}

	\begin{theorem}\label{TMA}
		Let $\mathcal L=(\mathcal L_\omega)_{\omega \in \Omega}$ be a good cocycle of transfer operators. Then, there exists a tempered random variable $\tilde K \colon \Omega \to [1, +\infty)$ with the property that for each measurable observable $\psi \colon \Omega \times X \to \R$ satisfying the following properties:
		
		\begin{equation}\label{bb}
			\esssup_{\omega \in \Omega} \bigg ( \tilde K(\omega) \| \psi_\omega \|_{BV} \bigg )<+\infty,
		\end{equation}
		and for $\mathbb P$-a.e. $\omega \in \Omega$,
		\[
		\int_X \psi_\omega \, d\mu_\omega=0,
		\]
		the following holds:
		\begin{enumerate}
			\item there exists $\Sigma^2\ge 0$ such that
			\[
			\Sigma^2=\lim_{n\to \infty}\frac 1 n \mathbb E_\omega \bigg{(}\sum_{k=0}^{n-1}\psi_{\sigma^k \omega}\circ T_\omega^{k} \bigg{)}^2, \quad \text{for $\mathbb P$-a.e. $\omega \in \Omega$;}
			\]
			Moreover, $\Sigma^2=0$ if and only if $\psi=r-r\circ\tau$ for some measurable $r:\Omega\times X\to\mathbb R$ so that $\esssup_{\omega\in\Omega}\|r(\omega,\cdot)\|_{BV}<\infty$.
			\item Assume $\Sigma^2 >0$.
			Then, for  $\mathbb P$-a.e. $\omega \in \Omega$ and  $\forall \epsilon>\frac 34,$  by enlarging the  probability space $(X, \mathcal B, \mu_\omega)$ if necessary, it is possible to find a sequence $(Z_k)_k$ of independent and centered (i.e. of zero mean) Gaussian random variables
			such that $\mu_\omega$ almost surely,
			\begin{equation*}
				\sup_{1\le k\le n}\left| \sum_{i=0}^{k-1} (\psi_{\sigma^i\omega} \circ T_\omega^i)-\sum_{i=1}^k Z_i \right|=O(n^{1/4}\log^{\epsilon}(n)).
			\end{equation*}
			Moreover, the difference between the $L^2$-norms of $\sum_{i=0}^{k-1} (\psi_{\sigma^i\omega} \circ T_\omega^i)$ and \\$\sum_{i=1}^k Z_i$ is bounded in $k$, and the variance of $Z_i$ equals $\int_X m_{\sigma^i\omega}^2d\mu_{\sigma^i\omega}$, with $m_\omega$ given by \eqref{m def}.
		\end{enumerate}
	\end{theorem}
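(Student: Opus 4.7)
The plan is to run the Gordin-style martingale approximation adapted to non-stationary cocycles, and then invoke an abstract ASIP for reverse martingales with bounded differences, in the spirit of Cuny--Merlev\`ede~\cite{CM} and Korepanov--Kosloff--Melbourne~\cite{KZM}.

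First, I would fix $\epsilon'\in (0,\lambda')$ and define the tempered majorant $\tilde K$ so that $\tilde K(\omega)\ge K(\omega)\cdot \tilde D_{\epsilon'}(\omega)\cdot N_{\epsilon'}(\omega)$, where $K$ is from Lemma~\ref{PRO}, $\tilde D$ from Corollary~\ref{Cor}, $N$ from \eqref{Nom}, and each subscript $\epsilon'$ denotes the tempered majorant provided by Proposition~\ref{PA}. Under the scaling assumption \eqref{bb}, I would introduce the corrector
\[
g_\omega := \sum_{n=1}^{\infty} L_{\sigma^{-n}\omega}^{n}\psi_{\sigma^{-n}\omega},
\]
which converges in $BV$ since \eqref{est11} applied to the $\mu_{\sigma^{-n}\omega}$-centered summand $\psi_{\sigma^{-n}\omega}$, combined with the temperedness of $\tilde D$ and the compensating factor $1/\tilde K$, produces geometric decay of each term, uniformly in $\omega$. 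In particular $\esssup_\omega \|g_\omega\|_{BV}<\infty$, and hence $\|g_\omega\|_\infty$ is uniformly bounded.

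Next, set $m_\omega := \psi_\omega + g_\omega - g_{\sigma\omega}\circ T_\omega$. The telescoping identity $L_\omega(\psi_\omega+g_\omega)=g_{\sigma\omega}$, which is immediate from the definition of $g_\omega$, together with $L_\omega(h\circ T_\omega)=h$ (a consequence of $L_\omega\mathds 1=\mathds 1$), gives $L_\omega m_\omega=0$. Thus $(m_{\sigma^{i}\omega}\circ T_\omega^i)_{i\ge 0}$ is a reverse martingale difference sequence on $(X,\mu_\omega)$ for the filtration $(T_\omega^{-n}\mathcal B)_{n\ge 0}$. Assumption \eqref{Nom} together with the uniform $BV$ control on $g$ gives a tempered bound for $\|m_\omega\|_{BV}$, and crucially
\[
\|m_\omega\|_{\infty}\le \|\psi_\omega\|_\infty+\|g_\omega\|_\infty+\|g_{\sigma\omega}\|_\infty
\]
is uniformly bounded in $\omega$ (no tempered factor is needed since composition with $T_\omega$ preserves the sup norm). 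Telescoping then yields
\[
\sum_{i=0}^{n-1}\psi_{\sigma^i\omega}\circ T_\omega^i = \sum_{i=0}^{n-1} m_{\sigma^i\omega}\circ T_\omega^i + g_{\sigma^n\omega}\circ T_\omega^n - g_\omega,
\]
with a boundary error of size $O(1)$ in $L^\infty$, negligible compared with the target rate $n^{1/4}\log^{\epsilon}n$.

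For the variance statement, Birkhoff's theorem applied to the $\mathbb P$-integrable map $\omega\mapsto\int m_\omega^2\, d\mu_\omega$ gives $\Sigma^2=\int_\Omega\int m_\omega^2\, d\mu_\omega\, d\mathbb P$, and this coincides with the limit of $\frac{1}{n}\mathbb E_\omega(S_n\psi(\omega,\cdot))^2$ by squaring the telescoping identity and controlling cross terms via the $L^\infty$ bound on $g$. If $\Sigma^2=0$ then $m_\omega\equiv 0$ $\mu_\omega$-a.e., so $\psi=r-r\circ\tau$ with $r:=-g$ and $\esssup_\omega\|r(\omega,\cdot)\|_{BV}<\infty$; the converse implication is deferred to Lemma~\ref{BVcob}. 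The ASIP with rate $n^{1/4}\log^{\epsilon}n$ (and the $L^2$-closeness of variances) then follows from a reverse-martingale ASIP such as the one in \cite{CM} or the refinement in \cite{KZM}, applied to the bounded differences $(m_{\sigma^i\omega}\circ T_\omega^i)_{i\ge 0}$. The \textbf{main obstacle} will be verifying the conditional-variance hypothesis of that abstract theorem, namely a quantitative almost-sure control of the fluctuations of $\sum_{i=0}^{n-1}(L_{\sigma^i\omega}m_{\sigma^i\omega}^2)\circ T_\omega^{i+1}$ around $n\Sigma^2$. My plan for this is to apply the decay estimate \eqref{est11} to the centered observables $m_{\sigma^i\omega}^2-\int m_{\sigma^i\omega}^2\, d\mu_{\sigma^i\omega}$ and to combine it with a quantitative random Birkhoff argument; this is the step where the threshold $\epsilon>3/4$ ultimately enters.
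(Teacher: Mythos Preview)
Your overall architecture --- Gordin corrector $g_\omega=\sum_{n\ge 1}L_{\sigma^{-n}\omega}^n\psi_{\sigma^{-n}\omega}$, reverse martingale difference $m_\omega$, $O(1)$ telescoping error, then the Cuny--Merlev\`ede ASIP (Lemma~\ref{CMT}) --- is exactly the paper's. The first part of the theorem also follows, more directly than you indicate, from Theorem~\ref{TM} once $\tilde K\ge K$.

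There is, however, a genuine gap in the plan for the conditional-variance hypothesis~\eqref{CMT1}. The quantity to control is not the Birkhoff sum of $m_{\sigma^i\omega}^2-\int m_{\sigma^i\omega}^2\,d\mu_{\sigma^i\omega}$ but of
\[
\hat\psi_\omega:=L_\omega(m_\omega^2)\circ T_\omega-\int_X m_\omega^2\,d\mu_\omega,
\]
and you need an \emph{almost-sure} bound of order $o\bigl(n^{1/2}(\log n)^{\epsilon'}\bigr)$, $\epsilon'>\tfrac12$, for $\sum_{k=0}^{n-1}\hat\psi_{\sigma^k\omega}\circ T_\omega^k$. A ``decay estimate plus quantitative Birkhoff'' will typically give $L^2$ control or an $O(n^{1/2+\delta})$ a.s.\ bound, which is too weak. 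The paper's key idea is a \emph{bootstrap}: choose $\tilde K=K_1^{7/2}$ with $K_1=\max\{K,N,\tilde D\}$ so that, after the successive losses of tempered weight in building $\chi$, then $m$, then $\hat\psi$ (squaring, applying $L_\omega$, composing with $T_\omega$), one still has $\esssup_\omega K(\omega)\|\hat\psi_\omega\|_{BV}<\infty$. This places $\hat\psi$ back inside the class of observables to which Theorem~\ref{TM} applies; the ASIP for $\hat\psi$ then gives the LIL, hence $\sum_{k=0}^{n-1}\hat\psi_{\sigma^k\omega}\circ T_\omega^k=O(n^{1/2}\sqrt{\log\log n})$, which is what forces $a_n=n^{1/2}(\log n)^{\epsilon'}$ with $\epsilon'>\tfrac12$ in Lemma~\ref{CMT} and ultimately the threshold $\epsilon>\tfrac34$.

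Your proposed $\tilde K\ge K\cdot\tilde D_{\epsilon'}\cdot N_{\epsilon'}$ does not provide this headroom: it suffices to make $\|g_\omega\|_{BV}$ bounded, but after forming $m_\omega^2$ and applying $L_\omega(\cdot)\circ T_\omega$ you pick up an extra factor of $N(\omega)\tilde D(\omega)$, and you no longer control $K(\omega)\|\hat\psi_\omega\|_{BV}$. The fix is precisely to raise the power of $K_1$ in $\tilde K$ so that the weighted bounds $\esssup_\omega K_1(\omega)^{5/2}\|\chi_\omega\|_{BV}<\infty$, $\esssup_\omega K_1(\omega)^{3/2}\|m_\omega\|_{BV}<\infty$, and finally $\esssup_\omega K_1(\omega)\|\hat\psi_\omega\|_{BV}<\infty$ all hold; then invoke Theorem~\ref{TM} for $\hat\psi$.
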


	\begin{remark}
		We note that $\tilde K$ will be constructed so that  $K\le \tilde K$, where $K$ is as in the statement of Theorem~\ref{TM}. Consequently, the first part of Theorem~\ref{TMA} will be a consequence of Theorem~\ref{TM}.
	\end{remark}
	
	Let $K_1=\max \{ K, N, \tilde D \}$, where $K$ is as in the statement of Theorem~\ref{TM}, $\tilde D$ is as in the statement of Corollary~\ref{Cor} and $N$ is from~\eqref{Nom}. By taking into account Proposition~\ref{PA}, we can assume that 
	\begin{equation}\label{K 1 prop}
		K_1(\sigma^n \omega) \le K_1(\omega)e^{\delta |n|} \quad \text{for $\mathbb P$-a.e. $\omega \in \Omega$ and $n\in \Z$, }
	\end{equation}
	where $\delta \in (0, 2\lambda'/5)$ is fixed but arbitrary and $\lambda'>0$ is given by Corollary~\ref{Cor}. Finally, set $\tilde K=K_1^{7/2}$. Then, $\tilde K$ is  tempered.

	Set
	\begin{equation}\label{eq:DefChi}
		\chi_\omega:=\sum_{n=1}^\infty L_{\sigma^{-n} \omega}^n (\psi_{\sigma^{-n} \omega}), \quad \omega \in \Omega. 
	\end{equation}
	\begin{lemma}\label{LemChi}
		We have that 
		\begin{equation}\label{LC0}
		\esssup_{\omega \in \Omega}(K_1(\omega)^{5/2} \| \chi_\omega \|_{BV})<+\infty.
		\end{equation}
	\end{lemma}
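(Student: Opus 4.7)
My plan is to bound each term in the series~\eqref{eq:DefChi} in $BV$ using the exponential decay of the normalized cocycle on $BV_\omega^0$ from Corollary~\ref{Cor}, to push the $\omega$-dependence through temperedness of $K_1$, and then to sum a geometric series. The main point is that the centering hypothesis $\int \psi_\omega\, d\mu_\omega=0$ places $\psi_\omega$ in the kernel of $\Id-\tilde\Pi(\omega)$, so that estimate~\eqref{est11} applies directly.

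First, by~\eqref{Pi} and centering, $\tilde\Pi(\sigma^{-n}\omega)\psi_{\sigma^{-n}\omega}=\psi_{\sigma^{-n}\omega}$. Hence~\eqref{est11} gives
\[
\|L_{\sigma^{-n}\omega}^n\psi_{\sigma^{-n}\omega}\|_{BV}\le \tilde D(\sigma^{-n}\omega)\, e^{-\lambda' n}\,\|\psi_{\sigma^{-n}\omega}\|_{BV}.
\]
Since $\tilde D\le K_1$, the temperedness bound~\eqref{K 1 prop} yields $\tilde D(\sigma^{-n}\omega)\le K_1(\omega)e^{\delta n}$. On the other hand, using~\eqref{bb} together with $\tilde K=K_1^{7/2}$, there exists $C_0>0$ such that $\|\psi_\eta\|_{BV}\le C_0 K_1(\eta)^{-7/2}$ for $\mathbb P$-a.e.\ $\eta$. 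Applying this at $\eta=\sigma^{-n}\omega$ and again using~\eqref{K 1 prop} in the form $K_1(\sigma^{-n}\omega)\ge K_1(\omega)e^{-\delta n}$, we obtain $\|\psi_{\sigma^{-n}\omega}\|_{BV}\le C_0 K_1(\omega)^{-7/2}e^{7\delta n/2}$.

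Combining these three estimates:
\[
K_1(\omega)^{5/2}\,\|L_{\sigma^{-n}\omega}^n\psi_{\sigma^{-n}\omega}\|_{BV}\le C_0\, e^{(-\lambda'+9\delta/2)n}.
\]
Choosing (or restricting) $\delta$ so that $\delta<2\lambda'/9$ (which is compatible with the standing constraint $\delta<2\lambda'/5$), the exponent is strictly negative, so the series~\eqref{eq:DefChi} converges absolutely in $BV$ uniformly in $\omega$, and summing in $n\ge 1$ gives
\[
\esssup_{\omega\in\Omega}\bigl(K_1(\omega)^{5/2}\|\chi_\omega\|_{BV}\bigr)\le C_0\sum_{n\ge1}e^{(-\lambda'+9\delta/2)n}<+\infty,
\]
which is~\eqref{LC0}.

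The only delicate point is the bookkeeping of exponents: one has to absorb a factor $K_1(\omega)e^{\delta n}$ from $\tilde D(\sigma^{-n}\omega)$ and a factor $K_1(\omega)^{-7/2}e^{7\delta n/2}$ from $\|\psi_{\sigma^{-n}\omega}\|_{BV}$, leaving $K_1(\omega)^{-5/2}$ (which cancels the $K_1(\omega)^{5/2}$ we want to insert) together with exponential growth $e^{9\delta n/2}$ that must be dominated by the contraction $e^{-\lambda' n}$. This is precisely why the observable is weighted by $K_1^{7/2}=K^{7/2}$ in the hypothesis~\eqref{bb} rather than by a smaller power, and why the conclusion gains only $K_1^{5/2}$: the difference $K_1^{7/2}/K_1^{5/2}=K_1$ pays for one application of temperedness in $\tilde D$, while half of the remaining $K_1^{5/2}$-budget (i.e. the $e^{7\delta n/2}$ factor) pays for temperedness of $\psi$.
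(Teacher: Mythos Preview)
Your proof is correct and follows essentially the same strategy as the paper: apply~\eqref{est11} to each summand, use $\tilde D\le K_1$, invoke~\eqref{bb} with $\tilde K=K_1^{7/2}$, push all $\omega$-dependence through~\eqref{K 1 prop}, and sum a geometric series. The only difference is bookkeeping: the paper first combines $K_1(\sigma^{-n}\omega)\cdot K_1(\sigma^{-n}\omega)^{-7/2}=K_1(\sigma^{-n}\omega)^{-5/2}$ and only then applies temperedness once, yielding the exponent $-(\lambda'-5\delta/2)$, which is negative under the standing assumption $\delta<2\lambda'/5$ without any further restriction; your separate applications of temperedness to $\tilde D(\sigma^{-n}\omega)$ and to $\|\psi_{\sigma^{-n}\omega}\|_{BV}$ cost an extra $e^{2\delta n}$ and force the tighter (but, as you note, still admissible) choice $\delta<2\lambda'/9$.
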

	\begin{proof}
		By~\eqref{est11} and~\eqref{K 1 prop}, it follows that 
		\[
		\begin{split} 
			\| \chi_\omega \|_{BV} &\le \sum_{n=1}^\infty \tilde D(\sigma^{-n} \omega)e^{-\lambda' n} \|\psi_{\sigma^{-n} \omega}\|_{BV} \\
			&\le \sum_{n=1}^\infty K_1(\sigma^{-n} \omega)e^{-\lambda' n}\|\psi_{\sigma^{-n} \omega}\|_{BV} \\
			&\le \esssup_{\omega \in \Omega} \bigg ( \tilde K(\omega) \| \psi_\omega \|_{BV} \bigg ) \sum_{n=1}^\infty \frac{1}{ K_1(\sigma^{-n} \omega)^{5/2}}e^{-\lambda' n} \\
			&\le \esssup_{\omega \in \Omega} \left( \tilde K(\omega) \| \psi_\omega \|_{BV} \right)K_1(\omega)^{-5/2}\sum_{n=1}^\infty e^{-(\lambda'-5\delta/2)n},
		\end{split}
		\]
		for $\mathbb P$-a.e. $\omega \in \Omega$. Since $\delta \in (0, 2\lambda'/5)$, we conclude that the statement of the lemma holds. 
	\end{proof}
	
	Next, set
	\begin{equation}\label{m def}
		m_\omega=\psi_\omega+\chi_\omega-\chi_{\sigma \omega} \circ T_\omega, \quad \omega \in \Omega.
	\end{equation}
	\begin{lemma}\label{M}
		We have
		\[
		\esssup_{\omega \in \Omega}K_1(\omega)^{3/2}\| m_\omega \|_{BV}<+\infty.
		\]
	\end{lemma}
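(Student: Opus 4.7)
The strategy is to apply the triangle inequality to
\[
\|m_\omega\|_{BV} \le \|\psi_\omega\|_{BV} + \|\chi_\omega\|_{BV} + \|\chi_{\sigma\omega} \circ T_\omega\|_{BV},
\]
and bound each summand after multiplication by $K_1(\omega)^{3/2}$. For the first term, the hypothesis \eqref{bb} together with $\tilde K = K_1^{7/2}$ gives $\|\psi_\omega\|_{BV} \le C \, K_1(\omega)^{-7/2}$, so $K_1(\omega)^{3/2}\|\psi_\omega\|_{BV} \le C \, K_1(\omega)^{-2}$, which is uniformly bounded since $K_1 \ge 1$. For the second term, Lemma~\ref{LemChi} gives $\|\chi_\omega\|_{BV} \le C \, K_1(\omega)^{-5/2}$, so $K_1(\omega)^{3/2}\|\chi_\omega\|_{BV} \le C \, K_1(\omega)^{-1}$, again uniformly bounded.

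The main work lies in controlling the third (composition) term, since it involves $\chi$ evaluated at $\sigma\omega$ but we want a bound depending on $K_1(\omega)$. First, I would use \eqref{Nom} and $N \le K_1$ to get
\[
\|\chi_{\sigma\omega} \circ T_\omega\|_{BV} \le N(\omega)\|\chi_{\sigma\omega}\|_{BV} \le K_1(\omega)\|\chi_{\sigma\omega}\|_{BV}.
\]
Then Lemma~\ref{LemChi} applied at $\sigma\omega$ yields $\|\chi_{\sigma\omega}\|_{BV} \le C \, K_1(\sigma\omega)^{-5/2}$, so
\[
K_1(\omega)^{3/2}\|\chi_{\sigma\omega}\circ T_\omega\|_{BV} \le C \, \frac{K_1(\omega)^{5/2}}{K_1(\sigma\omega)^{5/2}}.
\]
To conclude, I would invoke the one-step temperedness estimate \eqref{K 1 prop} applied with $n=-1$ at the point $\sigma\omega$, which gives $K_1(\omega) \le K_1(\sigma\omega) e^\delta$, hence $K_1(\omega)/K_1(\sigma\omega) \le e^\delta$. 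Therefore
\[
K_1(\omega)^{3/2}\|\chi_{\sigma\omega}\circ T_\omega\|_{BV} \le C \, e^{5\delta/2},
\]
uniformly in $\omega$. Combining the three bounds yields the claim.

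The main conceptual obstacle is precisely this balancing act in the composition term: one loses a factor $K_1(\omega)$ from \eqref{Nom} and only recovers it by trading it against $K_1(\sigma\omega)^{5/2}$ via Lemma~\ref{LemChi}. This is what forces the use of the tempered bound \eqref{K 1 prop} and ultimately explains why $\tilde K$ was chosen as $K_1^{7/2}$: the exponents $7/2$, $5/2$, $3/2$ are exactly what is needed so that, after paying one power of $K_1$ for the composition and one more power for multiplying out $K_1(\omega)^{3/2}$, the remaining negative power of $K_1$ absorbs the tempered fluctuation cleanly.
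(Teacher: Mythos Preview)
Your proof is correct and follows exactly the same approach as the paper: the triangle inequality applied to $m_\omega = \psi_\omega + \chi_\omega - \chi_{\sigma\omega}\circ T_\omega$, then bounding each piece using~\eqref{bb}, Lemma~\ref{LemChi}, \eqref{Nom} with $N\le K_1$, and the one-step estimate~\eqref{K 1 prop}. The paper's proof is simply a terser version of what you wrote, ending with ``which together with $K_1(\omega)\ge 1$, Lemma~\ref{LemChi} and~\eqref{K 1 prop} easily implies the lemma''; your write-up fills in precisely those details.
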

	\begin{proof}
		We have that 
		\[
		\begin{split}
			\| m_\omega \|_{BV} & \le \|\psi_\omega \|_{BV} + \|\chi_\omega \|_{BV}+N(\omega) \|\chi_{\sigma \omega}\|_{BV} \\
			&\le \|\psi_\omega \|_{BV} + \|\chi_\omega \|_{BV}+K_1(\omega)\|\chi_{\sigma \omega}\|_{BV},
		\end{split}
		\]
		for $\mathbb P$-a.e. $\omega \in \Omega$, which together with $K_1(\omega) \ge 1$, Lemma \ref{LemChi} and \eqref{K 1 prop} easily implies the lemma.
	\end{proof} 
	
	Before proceeding with the proof of the ASIP, we complete the proof about the existence of a coboundary representation with a $BV$ function:
	
	\begin{lemma}\label{BVcob}
		Suppose that there exists a measurable map $c\colon \Omega \times X\to \mathbb R$ such that
		\begin{equation}\label{kck}
			\psi=c\circ \tau -c \quad \text{and} \quad \int_{\Omega \times X} |c(\omega,x)|^2d\mu(\omega,x)<\infty.
		\end{equation}
		Then for $\mathbb P$-a.e $\omega\in\Omega$, $c_\omega:=c(\omega,\cdot)\in BV$ and $\esssup_{\omega\in\Omega}\|c(\omega,\cdot)\|_{BV}<\infty$. 
	\end{lemma}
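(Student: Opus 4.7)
The plan is to write $c_\omega = \chi_\omega + r_\omega$, with $r_\omega := c_\omega - \chi_\omega \in L^2(\mu_\omega)$, and to show that $r$ is $\mu$-a.e.\ equal to a constant. Once this is established, the uniform $BV$-bound for $c_\omega$ follows directly from the uniform $BV$-bound for $\chi_\omega$ given by Lemma~\ref{LemChi}, since $v_\omega^0 \ge \ell(\omega) > 0$ by \eqref{ell} makes $m$-null and $\mu_\omega$-null sets coincide.

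The first step is to record a reverse--martingale identity for the decomposition \eqref{m def}: a direct telescoping computation using $L_\omega \mathds 1 = \mathds 1$, the cocycle property, and \eqref{eq:DefChi} yields $\chi_{\sigma\omega} - L_\omega\chi_\omega = L_\omega\psi_\omega$, and hence $L_\omega m_\omega = 0$ for $\mathbb P$-a.e.\ $\omega$. Substituting $\psi_\omega = c_{\sigma\omega}\circ T_\omega - c_\omega$ into \eqref{m def} rewrites $m_\omega = r_{\sigma\omega}\circ T_\omega - r_\omega$, and applying $L_\omega$ together with the elementary identity $L_\omega(g\circ T_\omega) = g\cdot L_\omega \mathds 1 = g$ yields the equivariance relation
\[
L_\omega r_\omega = r_{\sigma\omega} \qquad \text{for $\mathbb P$-a.e.\ $\omega \in \Omega$.}
\]

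The key step is then an $L^2$-rigidity argument. Viewing $L_\omega$ as the adjoint, in $L^2(\mu_\omega)\to L^2(\mu_{\sigma\omega})$, of the Koopman isometry $U_\omega g := g\circ T_\omega$ makes it an $L^2$-contraction, so the function $g(\omega) := \|r_\omega\|_{L^2(\mu_\omega)}$ (which lies in $L^2(\mathbb P)$ by Tonelli, since $r\in L^2(\mu)$) satisfies $g\circ\sigma \le g$; combined with $\sigma$-invariance of $\mathbb P$ and ergodicity of $\sigma$, this forces $g\circ\sigma = g$, and hence equality holds $\mathbb P$-a.s.\ in the contraction. Using the standard identity $(L_\omega h)\circ T_\omega = \mathbb E[h\mid T_\omega^{-1}\mathcal B]$ in $L^2(\mu_\omega)$, equality in the contraction forces $r_\omega$ to be $T_\omega^{-1}\mathcal B$-measurable, so $r_\omega = \tilde r_\omega\circ T_\omega$ for some $\tilde r_\omega$; plugging back into $L_\omega r_\omega = r_{\sigma\omega}$ gives $\tilde r_\omega = r_{\sigma\omega}$ and therefore $r = r\circ\tau$. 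Ergodicity of $(\tau,\mu)$ --- which holds in our setting through uniqueness of the absolutely continuous $\tau$-invariant measure projecting to $\mathbb P$ --- then forces $r$ to be $\mu$-a.e.\ equal to a constant $c_0$, so that $c_\omega = c_0 + \chi_\omega$, and the uniform $BV$-bound follows from Lemma~\ref{LemChi}.

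The delicate step I expect to be the main obstacle is the upgrade from the pointwise equivariance $L_\omega r_\omega = r_{\sigma\omega}$ to the $\tau$-invariance of $r$: the spectral gap of Corollary~\ref{Cor} is formulated on $BV$, whereas $r$ is a priori merely in $L^2$, so it cannot be invoked directly. The softer equality-in-contraction argument above is the natural substitute, but it requires careful identification of the Koopman--transfer duality at the level of the random equivariant measures $\mu_\omega$ together with ergodicity of both $\sigma$ (on the base) and $\tau$ (on the skew product).
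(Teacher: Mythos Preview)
Your proof is correct and reaches the same conclusion $c_\omega = \text{const} + \chi_\omega$, but the route at the key step is genuinely different from the paper's. Both arguments begin identically: set $r_\omega = c_\omega - \chi_\omega$ (the paper calls this $d_\omega$) and derive the equivariance $L_\omega r_\omega = r_{\sigma\omega}$ from $L_\omega m_\omega = 0$. From there the two diverge. The paper iterates to $r_{\sigma^n\omega} = L_\omega^n r_\omega$, approximates $r_\omega$ in $L^1(\mu_\omega)$ by a $BV$ function, invokes the exponential decay~\eqref{est11} on $BV$ together with the $L^1$-contraction of $L_\omega^n$ to get $\|r_{\sigma^n\omega} - d_0\|_{L^1(\mu_{\sigma^n\omega})} \to 0$ for $d_0 = \int r_\omega\, d\mu_\omega$, and finishes with dominated convergence over $\Omega$. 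You instead use an $L^2$-rigidity argument: equality in the contraction $\|L_\omega r_\omega\|_{L^2(\mu_{\sigma\omega})} \le \|r_\omega\|_{L^2(\mu_\omega)}$ forces $r_\omega$ to be $T_\omega^{-1}\mathcal B$-measurable via the conditional-expectation identity, hence $r = r\circ\tau$, and ergodicity of $(\tau,\mu)$ concludes.

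Your argument is softer in that it never touches the spectral gap~\eqref{est11}; it uses only the Koopman--transfer duality and the ergodic structure. The trade-off is that you need ergodicity of the skew product $(\tau,\mu)$, whereas the paper's proof uses only ergodicity of the base $\sigma$ (plus the spectral gap). In the present setting this is not an issue: ergodicity of $\mu$ follows from the essential uniqueness of the equivariant family $(v_\omega^0)$ in Theorem~\ref{T}, and the paper itself invokes it elsewhere (proof of Lemma~\ref{1023j}). One minor comment: in the step $g\circ\sigma \le g \Rightarrow g\circ\sigma = g$ you only need $\sigma$-invariance of $\mathbb P$ and $g \in L^1(\mathbb P)$; ergodicity of $\sigma$ is not required there.
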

	\begin{proof}
		First, notice that the function $c$ in \eqref{kck} satisfies
		$c_\omega=c(\omega,\cdot)\in L^2(\mu_\omega)$ for $\mathbb P$-a.e. $\omega \in \Omega$.
		Next, by Lemma \ref{LemChi} we have
		\begin{equation}\label{yyy}
			\esssup_{\omega \in \Omega}\lVert \chi_\omega\rVert_{BV}<\infty.
		\end{equation}
		The rest of the proof will only rely on \eqref{yyy} (and not the stronger condition \eqref{LC0}) and thus remains valid in the circumstances of Theorem \ref{TM}, since the arguments in the proof of Lemma \ref{LemChi} give \eqref{yyy} when $K(\omega)\|\psi_\omega\|_{BV}$ is bounded.
		A straightforward computation yields that
		\[
		\chi_\omega-L_{\sigma^{-1}\omega}\chi_{\sigma^{-1}\omega}=L_{\sigma^{-1}\omega}\psi_{\sigma^{-1}\omega}, \quad \text{for $\mathbb P$-a.e. $\omega \in \Omega$.}
		\]
		On the other hand, \eqref{kck} together with the fact that $L_{\sigma^{-1}\omega}(c_\omega\circ T_{\sigma^{-1}\omega})=c_\omega$ (see ~\cite[Lemma 7]{DFGTV1}) imply that 
		\[
		c_\omega-L_{\sigma^{-1}\omega}c_{\sigma^{-1}\omega}=L_{\sigma^{-1} \omega}\psi_{\sigma^{-1}\omega}, \quad \text{for $\mathbb P$-a.e. $\omega \in \Omega$.}
		\]
		Setting $d_\omega:=c_\omega-\chi_\omega$, it follows from the last two identities that
		\begin{equation}\label{I}
			d_{\sigma \omega}=L_\omega d_\omega \quad \text{for $\mathbb P$-a.e. $\omega \in \Omega$,}
		\end{equation}
		Since $\sigma$ is ergodic, we conclude that the integral $d_0:=\int_X d_\omega d\mu_\omega$ does not depend on $\omega$. Moreover, since $|L_\omega d_\omega|\leq L_\omega|d_\omega|$ we have that the norm $\|d_\omega\|_{L^1(\mu_\omega)}$ does not depend on $\omega$.
		Next, by iterating~\eqref{I} we obtain that 
		\begin{equation}\label{d rel}
			d_{\sigma^n\omega}=L_{\omega}^nd_{\omega}, \quad \text{for $\mathbb P$-a.e. $\omega \in \Omega$ and $n\in \N$.}
		\end{equation}
		Observe now that for all $g_1,g_2\in L^1(\mu_{\omega})$ we have
		\[
		\|L_{\omega}^ng_1-L_{\omega}^ng_2\|_{L^1(\mu_{\sigma^n\omega})}\leq \int_X L_{\omega}^n(|g_1-g_2|)d\mu_{\sigma^n\omega}=\|g_1-g_2\|_{L^1(\mu_\omega)}. 
		\]
		Now, since functions with bounded variation are dense in $L^1(\mu_\omega)$, by approximating $d_\omega$ by a function with bounded variation and then using \eqref{est11} we obtain that for $\mathbb P$ a.e. $\omega \in \Omega$, we have
		\begin{equation}\label{d rel2}
			\lim_{n\to\infty}\left\|L_{\omega}^nd_\omega-\left(\int_X d_\omega d\mu_\omega\right)\mathds 1\right\|_{L^1(\mu_{\sigma^n\omega})}=0.
		\end{equation}
		By~\eqref{d rel} and~\eqref{d rel2},  we get that
		\[
		\lim_{n\to\infty}\|d_{\sigma^n\omega}-d_0\|_{L^1(\mu_{\sigma^n\omega})}=0, \quad \text{for $\mathbb P$-a.e. $\omega \in \Omega$.}
		\]
		Since $\sigma$ is ergodic we conclude that $d_\omega=d_0$ in $L^1(\mu_\omega)$ for $\mathbb P$ a.e. $\omega \in \Omega$. Indeed,
		\begin{align*}
			\int_{\Omega\times X}|d(\omega,x)-d_0|d\mu(\omega,x)&=\int_{\Omega}\left(\int_X|d\circ\tau^n(\omega,x)-d_0|d\mu_\omega(x)\right) d\mathbb P(\omega)\\
			&=\int_{\Omega}\left(\int_X|d_{\sigma^n\omega}-d_0|d\mu_{\sigma^n\omega}(x)\right) d\mathbb P(\omega)\\
			&=\int_{\Omega}\|d_{\sigma^n\omega}-d_0\|_{L^1(\mu_{\sigma^n\omega})} d\mathbb P(\omega),
		\end{align*}
		where $d(\omega,x):=d_\omega(x)$.
		Next, observe that the random variables $G_n(\omega)=\|d_{\sigma^n\omega}-d_0\|_{L^1(\mu_{\sigma^n\omega})}$ are uniformly bounded since $\| d_\omega \|_{L^1(\mu_\omega)}$ does not depend on $\omega$. By using the dominated convergence theorem we conclude that the above right hand side converges to $0$.
		Therefore, we get that for $\mathbb P$-a.e. $\omega \in \Omega$,  $d_\omega=d_0$. Hence, 
		\[c_\omega=d_\omega+\chi_\omega=d_0+\chi_\omega\in BV,\]
		with $\esssup_{\omega\in\Omega}\|c_\omega\|_{BV}<+\infty$, for $c$ satisfying \eqref{kck}.
	\end{proof}	
	Going back to \eqref{M},  one has via a straightforward computation: 
	\[
	L_\omega (m_\omega)=0 \quad \text{for $\mathbb P$-a.e. $\omega \in \Omega$.}
	\]
	In particular, it follows (see~\cite[Proposition 2.]{DFGTV1}) that\footnote{$\mathbb E_\omega[\phi\rvert \mathcal G]$ denotes the conditional expectation of $\phi$ with respect to the $\sigma$-algebra $\mathcal G$ and $\mu_\omega$.}
	\[
	\mathbb E_\omega[m_{\sigma^n \omega}\circ T_\omega^{n}\rvert (T_\omega^{n+1})^{-1}(\mathcal B)]=L_{\sigma^n \omega}(m_{\sigma^n \omega})\circ T_\omega^{n+1}=0.
	\]
	In other words, $(m_{\sigma^n \omega}\circ T_\omega^{n})_{n\ge 0}$ is a so-called reversed martingale difference with respect to the sequence of $\sigma$-algebras $((T_{\omega}^n)^{-1}(\mathcal B))_{n\ge 0}$. In view of the above lemmas together with~\eqref{m def}, we have that 
	\begin{equation}\label{MarAp}
		\sup_n\,\esssup_{\omega \in \Omega}\left\| \sum_{k=0}^{n-1}\psi_{\sigma^k \omega}\circ T_\omega^{k}-\sum_{k=0}^{n-1}m_{\sigma^k \omega}\circ T_\omega^{k}\right\|_{\infty}<\infty.
	\end{equation}
	Next, we define a new observable $\hat{\psi}\colon \Omega \times X\to \mathbb R$ by 
	\[
	\hat{\psi}_\omega=L_\omega (m_\omega^2)\circ T_\omega -\int_X m_\omega^2\, d\mu_\omega, \quad \omega \in \Omega.
	\]
	Clearly, 
	\[
	\int_X\hat{\psi}_\omega \, d\mu_\omega=0, \quad \text{for $\mathbb P$-a.e. $\omega \in \Omega$.}
	\]
	Moreover,  we have  that
	\begin{equation}\label{hatpsi}
		\esssup_{\omega \in \Omega}K_1(\omega)\lVert \hat{\psi}_\omega \rVert_{BV} <\infty.
	\end{equation}
	Indeed,  it follows from~\eqref{est11}, \eqref{est22},  \eqref{Nom} and Lemma~\ref{M} that there exists $C>0$ such that 
	\[\|L_\omega (m_\omega^2)\circ T_\omega \|_{BV}\leq  2\tilde D(\omega)N(\omega)\|m_\omega^2\|_{BV}\leq 2 K_1(\omega)^2\|m_\omega^2\|_{BV}\leq CK_1(\omega)^{-1},
	\]
	for $\mathbb P$-a.e. $\omega \in \Omega$. The above estimate together with Lemma~\ref{M} implies that~\eqref{hatpsi} holds.
	We will need the following lemma.
	
	\begin{lemma}\label{1023j}
		For $\mathbb P$-a.e. $\omega \in \Omega$, we have that
		\[
		\sum_{k=0}^{n-1} \hat{\psi}_{\sigma^k \omega}\circ T_\omega^{k}=O(n^{1/2} \sqrt{\log \log n} ).
		\]
	\end{lemma}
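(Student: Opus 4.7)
The plan is to apply Theorem~\ref{TM} in its scalar form ($d=1$) to the observable $\hat{\psi}$, and then upgrade the resulting ASIP to a law-of-the-iterated-logarithm bound via the classical LIL for independent centred Gaussian variables. First I would verify the hypotheses of Theorem~\ref{TM} for $\hat\psi$. Measurability of $(\omega,x)\mapsto\hat\psi_\omega(x)$ is immediate from its definition via $m_\omega$ and $L_\omega$. The condition $\hat\psi_\omega\in BV$ together with the scaling bound \eqref{eq:condobservable} follow at once from \eqref{hatpsi}: by construction $K\le K_1\le \tilde K$ and $\esssup_\omega K_1(\omega)\|\hat\psi_\omega\|_{BV}<\infty$, so a fortiori $\esssup_\omega K(\omega)\|\hat\psi_\omega\|_{BV}<\infty$. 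The centering identity $\int_X\hat\psi_\omega\,d\mu_\omega=0$ is a one-line computation using $(T_\omega)_\ast\mu_\omega=\mu_{\sigma\omega}$ and $\int_X L_\omega f\,d\mu_{\sigma\omega}=\int_X f\,d\mu_\omega$, which together yield $\int_X L_\omega(m_\omega^2)\circ T_\omega\,d\mu_\omega=\int_X m_\omega^2\,d\mu_\omega$.

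Theorem~\ref{TM} then produces an asymptotic variance $\hat\Sigma^2\ge 0$, and I would split into two cases. If $\hat\Sigma^2=0$, the degeneracy clause of Theorem~\ref{TM} yields a measurable $r\colon\Omega\times X\to\mathbb R$ with $\esssup_\omega\|r(\omega,\cdot)\|_{BV}<\infty$ such that $\hat\psi=r-r\circ\tau$, and the telescoping identity
\[
\sum_{k=0}^{n-1}\hat\psi_{\sigma^k\omega}\circ T_\omega^k=r_\omega-r_{\sigma^n\omega}\circ T_\omega^n
\]
gives a uniform $O(1)$ bound, which trivially implies the claim. If instead $\hat\Sigma^2>0$, the ASIP furnished by Theorem~\ref{TM} provides, for $\mathbb P$-a.e.~$\omega$, a coupling with a sequence $(Z_k)_{k\ge 1}$ of independent centred Gaussians such that
\[
\sum_{k=0}^{n-1}\hat\psi_{\sigma^k\omega}\circ T_\omega^k-\sum_{k=1}^{n}Z_k=O(n^{1/4+\delta})
\]
for any fixed $\delta>0$. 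The third conclusion of Theorem~\ref{TM} gives $s_n^2:=\sum_{k=1}^n\mathrm{Var}(Z_k)=n\hat\Sigma^2+O(n^{1/2+\delta})$, and the individual variances are uniformly bounded in $k$ because $\|\hat\psi_\omega\|_{L^2(\mu_\omega)}\le C_{\var}\|\hat\psi_\omega\|_{BV}$ is essentially bounded by \eqref{hatpsi}.

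The classical Kolmogorov LIL for independent centred Gaussians with bounded individual variances and $s_n\to\infty$ then gives $\sum_{k=1}^n Z_k=O(\sqrt{s_n^2\log\log s_n^2})=O(\sqrt{n\log\log n})$ almost surely. Combining this with the strictly smaller approximation error $O(n^{1/4+\delta})$ from the ASIP yields the desired bound $\sum_{k=0}^{n-1}\hat\psi_{\sigma^k\omega}\circ T_\omega^k=O(n^{1/2}\sqrt{\log\log n})$. The only non-trivial point is the check that $\hat\psi$ falls within the scope of Theorem~\ref{TM}; the deduction via the Gaussian LIL is standard.
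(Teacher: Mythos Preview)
Your proposal is correct and follows essentially the same route as the paper: verify via \eqref{hatpsi} and $K\le K_1$ that $\hat\psi$ falls within the scope of Theorem~\ref{TM}, then split according to whether the associated variance $\hat\Sigma^2$ vanishes (coboundary $\Rightarrow O(1)$) or not (ASIP $\Rightarrow$ LIL). The paper is simply terser in the nondegenerate case, invoking ``ASIP $\Rightarrow$ LIL'' as a black box rather than passing explicitly through the Kolmogorov LIL for the approximating Gaussians; one small caveat is that Theorem~\ref{TM} does not directly assert that the individual $\mathrm{Var}(Z_k)$ are uniformly bounded, but its third conclusion gives $s_n^2=n\hat\Sigma^2+O(n^{1/2+\delta})$, whence $\mathrm{Var}(Z_n)=O(n^{1/2+\delta})=o(s_n^2/\log\log s_n)$, which is all Kolmogorov's LIL requires.
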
	
	
	\begin{proof}
		There are two possibilities. Either the variance associated to $\hat{\psi}$ is nonzero or zero. If it is nonzero, then it follows from Theorem~\ref{TM} together with \eqref{hatpsi} (recall also that $K_1\ge K$)  that the process $\big(\hat{\psi}_{\sigma^k \omega}\circ T_\omega^{k}\big)_k$ satisfies the ASIP. In particular the law of iterated logarithm holds true, which implies the desired conclusion.

		In the case the variance vanishes, as in the proof of Theorem \ref{TM} there exists a bounded measurable function $c\in L^2(\Omega\times X,\mu)$ so that $\hat{\psi}=c\circ \tau -c$, $\mu$ almost everywhere. Thus, 
		\[
		\sum_{k=0}^{n-1} \hat{\psi}_{\sigma^k \omega}\circ T_\omega^{k}=-c_{\omega}+c_{\sigma^{n}\omega}\circ T_\omega^{n}, \quad \text{for $\mathbb P$-a.e. $\omega \in \Omega$.}
		\]
		We remark that we may construct for $\hat\psi$, via \eqref{eq:DefChi}, an observable $\hat\chi$  satisfying  Lemma \ref{LemChi} and thus \eqref{yyy}, as we did for $\psi$. Hence, Lemma \ref{BVcob} applies, and we actually have $\esssup_{\omega\in\Omega}\|c(\omega,\cdot)\|_{BV}<\infty$. Using (V3) we then get that 
		\[
		\sum_{k=0}^{n-1} \hat{\psi}_{\sigma^k \omega}\circ T_\omega^{k}=O(1).
		\]
		Alternatively,  since $\mu$ is ergodic and $c^2$ integrable we have that 
		\[
		\lim_{n\to\infty}\frac1n\sum_{j=0}^{n-1}c_{\sigma^{j}\omega}^2 \circ T_\omega^{j}=\int_{\Omega \times X} c^2d\mu
		\]
		and so for $\mathbb P$-a.e $\omega\in\Omega$ and  for $\mu_\omega$-a.e. $x$ we have
		\[
		c_{\sigma^{n}\omega}^2 \circ T_\omega^{n}=o(n),
		\]
		which implies that 
		\[
		\sum_{k=0}^{n-1} \hat{\psi}_{\sigma^k \omega}\circ T_\omega^{k}=o(\sqrt n),
		\]
		giving us the announced result.
	\end{proof}
	Finally, we recall the following result.
	\begin{lemma}[\cite{CM}]\label{CMT}
		Let $(X_n)_n$ be a sequence of square integrable random variables adapted to a non-increasing filtration $(\mathcal G_n)_n$. Assume that $\mathbb E(X_n\rvert \mathcal G_{n+1})=0$ almost surely, that
		\begin{equation}\label{varn}
			v_n^2:=\sum_{k=1}^n \mathbb E(X_k^2) \underset{n\to \infty}{\longrightarrow} \infty
		\end{equation}
		and that $\sup_n \mathbb E(X_n^2) <\infty$. Moreover, let $(a_n)_n$ be a non-decreasing sequence of positive numbers
		such that $(a_n/v_n^2)_n$ is non-increasing, $(a_n/v_n)_n$ is non-decreasing and:
		\begin{equation}\label{CMT1}
			\sum_{k=1}^n (\mathbb E(X_k^2\rvert \mathcal G_{k+1})-\mathbb E(X_k^2))=o(a_n) \quad a.s.;
		\end{equation}
		\begin{equation}\label{CMT2}
			\sum_{n\ge 1} a_n^{-v}\mathbb E(\lvert X_n\rvert^{2v}) <\infty \quad \text{for some $1\le v\le 2$.}
		\end{equation}
		Then, up to enlarging our probability space, it is possible to find a sequence $(Z_k)_k$ of independent and centered Gaussian variables with $\mathbb E(X_k^2)=\mathbb E(Z_k^2)$ such that, almost surely:
		\[
		\sup_{1\le k\le n}\bigg{\lvert} \sum_{i=1}^k X_i-\sum_{i=1}^k Z_i \bigg{\rvert}=o\left((a_n (\lvert \log (v_n^2/a_n) \rvert +\log \log a_n))^{1/2}\right)
		\]
	\end{lemma}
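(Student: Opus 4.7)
The plan is to reduce Lemma \ref{CMT} to the classical strong approximation theorem for (forward) martingale differences via a time-reversal trick, and then to graft a Skorokhod-embedding/Strassen-type coupling onto the reversed object.

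\textbf{Step 1: Time reversal.} For each integer $N\ge 1$, set $Y^{(N)}_k:=X_{N-k+1}$ and $\mathcal F^{(N)}_k:=\mathcal G_{N-k+1}$ for $1\le k\le N$. Because $(\mathcal G_n)_n$ is non-increasing, $(\mathcal F^{(N)}_k)_{k=1}^N$ is non-decreasing. The reverse-martingale hypothesis $\mathbb E(X_n\mid\mathcal G_{n+1})=0$ translates into $\mathbb E(Y^{(N)}_k\mid\mathcal F^{(N)}_{k-1})=0$, so $(Y^{(N)}_k,\mathcal F^{(N)}_k)$ is an honest (forward) martingale difference array, to which the hypotheses on conditional variances \eqref{CMT1} and on $L^{2v}$-moments \eqref{CMT2} all transfer directly.

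\textbf{Step 2: Skorokhod embedding.} On an enlarged probability space, represent the partial sums $M_k:=\sum_{i=1}^k Y^{(N)}_i$ via Skorokhod's embedding as $M_k=W_{T_k}$ where $W$ is a standard Brownian motion and $T_k$ is an increasing sequence of stopping times with $\mathbb E(T_k-T_{k-1}\mid\mathcal F^{(N)}_{k-1})=\mathbb E((Y^{(N)}_k)^2\mid\mathcal F^{(N)}_{k-1})$ and $\mathbb E((T_k-T_{k-1})^v\mid\mathcal F^{(N)}_{k-1})\le C\,\mathbb E(|Y^{(N)}_k|^{2v}\mid\mathcal F^{(N)}_{k-1})$. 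Combining assumption \eqref{CMT1} (for the conditional centering) with a Marcinkiewicz--Zygmund/Rio-type inequality fed by \eqref{CMT2} (to control $\sum(T_k-T_{k-1}-\mathbb E(T_k-T_{k-1}\mid\mathcal F^{(N)}_{k-1}))$) yields, almost surely,
\[
T_n-v_n^2=o\!\left(a_n\bigl(|\log(v_n^2/a_n)|+\log\log a_n\bigr)\right).
\]
Plugging this into the Lévy modulus of continuity of $W$ then produces the announced rate on the reversed scale.

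\textbf{Step 3: Building a single Gaussian sequence valid for all $n$.} The subtlety is that Step 2 embeds only a \emph{finite} window $[1,N]$, and \emph{a priori} a different Brownian motion is used for each $N$. To obtain a single independent centered Gaussian sequence $(Z_k)_k$ with $\mathbb E(Z_k^2)=\mathbb E(X_k^2)$ valid uniformly in $n$, one works on dyadic blocks $[2^j,2^{j+1})$: within each block one applies Steps~1--2, and one glues successive blocks via a Berkes--Philipp style coupling using the reverse-martingale property so that the overall approximation $\sup_{1\le k\le n}\bigl|\sum_{i=1}^k X_i-\sum_{i=1}^k Z_i\bigr|$ inherits, up to the dyadic summation of the block rates, the bound from Step~2. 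Finally $Z_k$ can be identified with the Brownian increment $W_{s_k}-W_{s_{k-1}}$ on the deterministic clock $s_k=\sum_{i=1}^k\mathbb E(X_i^2)$, ensuring independence and the prescribed variances.

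\textbf{Main obstacle.} The hard part is neither the time reversal nor the Skorokhod embedding (both standard) but the \emph{globalization} in Step~3: producing one Gaussian sequence that simultaneously approximates the partial sums for every $n$ with the sharp rate. The growth conditions imposed on $(a_n)$---namely $(a_n/v_n^2)$ non-increasing and $(a_n/v_n)$ non-decreasing---are precisely what is needed to make the dyadic block sums telescope into a bound of order $(a_n(|\log(v_n^2/a_n)|+\log\log a_n))^{1/2}$; without them the per-block errors would accumulate. In practice one avoids rebuilding all of this by invoking verbatim the reverse-martingale strong invariance principle of Cuny and Merlevède \cite{CM}, where precisely the argument sketched above is carried out in full.
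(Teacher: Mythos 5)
This lemma is not proved in the paper: it is imported verbatim from Cuny and Merlev\`ede \cite{CM}, as the bracketed citation in the lemma header indicates, and the authors make no attempt to reproduce or even sketch its proof. So there is no in-paper argument to compare your proposal against.

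Taken on its own terms, your sketch (time-reverse the reverse-martingale-difference array over a finite window to obtain a forward martingale-difference array, apply a Skorokhod embedding with moment control from \eqref{CMT2} and centering from \eqref{CMT1}, then glue dyadic blocks with a Berkes--Philipp coupling) is a reasonable high-level outline of how such a strong invariance principle for reverse martingales is established, and you correctly identify Step~3 (globalization across the finite windows, whose reversed filtration $\mathcal F^{(N)}_k=\mathcal G_{N-k+1}$ depends on $N$) as the technically delicate point. Two caveats. First, the time-reversal route is not obviously what \cite{CM} literally do; their proof reduces instead to a Gaussian approximation theorem for (reverse) martingale arrays with explicit rates, so the internal organization differs even if the underlying ingredients (Skorokhod embedding, blocking, moment bounds) are the same. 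Second, your Step~2 display $T_n-v_n^2=o(a_n(|\log(v_n^2/a_n)|+\log\log a_n))$ is slightly off as written: the embedding gives $T_n$ close to $\sum_k\mathbb E(X_k^2\mid\mathcal G_{k+1})$ at a rate governed by \eqref{CMT2}, and \eqref{CMT1} then bridges that conditional sum to $v_n^2$ at rate $o(a_n)$; the logarithmic factor enters only after feeding this clock estimate into the modulus of continuity of $W$, not before. In the end you reach the same conclusion the paper does — cite \cite{CM} — which is exactly what the paper intends the reader to do, so the proposal is consistent with the paper's (non-)treatment of this statement.
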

	
	We are now in  a position to complete the proof of Theorem~\ref{TMA}.
	\begin{proof}[Proof of the Theorem~\ref{TMA}]
		Observe that 
		\[
		\begin{split}
			& \sum_{k=0}^{n-1} (\mathbb E_\omega [m_{\sigma^k \omega}^2\circ T_\omega^{k} \rvert (T_\omega^{k+1})^{-1}(\mathcal B)]-\mathbb E_\omega (m_{\sigma^k \omega}^2\circ T_\omega^{k}))\\
			&=\sum_{k=0}^{n-1} \bigg{(} L_{\sigma^k \omega}(m_{\sigma^k \omega}^2)\circ T_\omega^{k+1}-\int_X m_{\sigma^k \omega}^2\, d\mu_{\sigma^k \omega} \bigg{)} \\
			&=\sum_{k=0}^{n-1} \hat{\psi}_{\sigma^k \omega}\circ T_\omega^{k},
		\end{split}
		\]
		and thus it follows from Lemma~\ref{1023j} that for $\mathbb P$-a.e. $\omega \in \Omega$  we have that
		\[
		\sum_{k=0}^{n-1} \big(\mathbb E_\omega [m_{\sigma^k \omega}^2\circ T_\omega^{k} \rvert (T_\omega^{k+1})^{-1}(\mathcal B)]-\mathbb E_\omega (m_{\sigma^k \omega}^2\circ T_\omega^{k})\big)=O(n^{1/2}\sqrt{\log \log  n}).
		\]
		This, in particular, shows that~\eqref{CMT1} holds true with $X_n=m_{\sigma^n\omega}\circ T_\omega^{n}$ and 
		$a_n=n^{1/2}(\log  n)^\epsilon$, for any $\epsilon>\frac12$. We  next  show that ~\eqref{CMT2} holds true with $v=2$. Indeed, it follows from Lemma~\ref{M} that  there exists $C>0$ such that  
		\[
		\sum_{n\geq 1}a_n^{-2}\mathbb E_\omega[m_{\sigma^n\omega}^4\circ T_\omega^{n}]\leq C\sum_{n\geq 1}a_n^{-2}<\infty.
		\]
		By applying  Lemma~\ref{CMT} for $X_n=m_{\sigma^n\omega}\circ T_\omega^{n}$, $n\in \N$, using (\ref{MarAp}) and that $\Sigma^2>0$ (which insures that $v_n^2$ grows linearly fast in $n$), we complete the proof of the Theorem~\ref{TMA}.
	\end{proof}

	\begin{remark}
		Using \eqref{MarAp}, Lemma \ref{m def} and  the Chernoff bounding scheme we can obtain an
		exponential concentration inequality in the sense of \cite[Proposition 4.5]{DH}.
	\end{remark}

	\section{Appendix A: a counter example to the existence of the asymptotic variance}\label{App A}

In this section, we  present an explicit example of a good cocycle of transfer operators and an real-valued observable  such that the conclusion of Theorem~\ref{TM} fails. Our observable will satisfy all assumptions of Theorem~\ref{TM} except for~\eqref{eq:condobservable}.  We stress that the example is essentially taken from~\cite[Appendix A]{Buzzi}. 
	
	
	Consider $(\tilde\Omega,\mathcal B,\mathds Q, S)$ the full-shift over $\{1,2,\dots\}$, with probability vector $(Z,Z/2^{2+\delta},\dots,Z/n^{2+\delta},\dots)$, for some $0\le\delta\le 1$, $Z$ being the normalization constant.
	\\Let $h:\tilde\Omega\to\R$ be the (positive) observable defined by $h(\omega)=\omega_0$ if $\omega:=(\omega_n)_{n\in \mathbb Z} \in\tilde\Omega$. Note that
	\[\int_{\tilde\Omega}h~d\mathds Q=\sum_{i\ge 1}i\cdot\frac{Z}{i^{2+\delta}}=Z\sum_{i=1}^\infty\frac{1}{i^{1+\delta}}<+\infty,\]
	when $0<\delta\le 1$.
	\\Define $(\Omega,\mathcal F,\mathbb P,\sigma)$ to be the suspension over $S$ with roof function $h$, i.e. $\Omega:=\{(\omega,i)\in\tilde\Omega\times\N,~ 0\le i<h(\omega)\}$, $\sigma:\Omega\circlearrowleft$ is given by
	\[
	\sigma(\omega,i):=\left\{
	\begin{aligned}
	&(\omega,i+1)~&\text{if}~i<h(\omega)-1\\
	&(S\omega,0)~&\text{if}~i=h(\omega)-1
	\end{aligned}
	\right.
	\]
	and $\mathbb P(A):=\left(\int_{\tilde\Omega} h~ d\mathds Q\right)^{-1}\sum_{i\ge 0}\mathds Q\left(A\cap(\tilde\Omega\times\{i\})\right)$.
	\\We can now define our random system: take $T_0:[0,1]\circlearrowleft$ to be the doubling map and let $T_1(x)=\frac{1}{2}\left(E(2x)+\{4x\}\right)$, where $E(x)$ denotes the integer part of $x$ and $\{x\}$ its fractional part.  We consider the random interval map $T_{(\omega,i)}$, $(\omega,i)\in\Omega$ defined by
\[
T_{(\omega,i)}:=\begin{cases}
T_1 & \text{if $i<h(\omega)-1$} \\
T_0 & \text{if  $i=h(\omega)-1$.}
\end{cases}
\]
It is verified in~\cite{Buzzi} that the associated cocycle of transfer operators  $(\mathcal L_{(\omega, i)})_{(\omega, i) \in \Omega}$ is good in the sense of \cite[Definition 13]{DDS}.  Moreover, we have that $\mu_{(\omega, i)}=m$ for $(\omega, i) \in \Omega$, where $m$ denotes the Lebesgue measure on $[0, 1]$. Hence $\L_{(\omega,i)}\mathds 1=\mathds 1$, which implies that the log-integrability condition is trivially satisfied. Let
\[
n_c (\omega, i):=\min \{ k\in \N: T_{(\omega,i)}^k([0,1/2])=[0,1]\}, \quad (\omega, i) \in \Omega. 
\]
Then, we have (see~\cite[p.47]{Buzzi}) that $n_c(\omega, i)=h(\omega)-i$. Observe that $n_c$ is not integrable. Indeed, for each $N\in \N$ we have that 
\begin{align*}
	\mathbb P(n_c(\omega,i)=N)&=\left(\int_{\tilde\Omega} h~ d\mathds Q\right)^{-1}\sum_{i\ge 0}\mathds Q(h(\omega)-i=N)\\
	&=\left(\int_{\tilde\Omega} h~ d\mathds Q\right)^{-1}\sum_{i\ge N}\mathds Q(\omega_0=i)\\
	&=\left(\int_{\tilde\Omega} h~ d\mathds Q\right)^{-1}\sum_{i\ge N}\frac{Z}{i^{2+\delta}}\sim\frac{C}{N^{1+\delta}},
	\end{align*}
	for some constant $C>0$, which easily implies that $n_c$ is not integrable. 
	We can finally introduce the observables of interest: consider $\phi=2\cdot\mathds 1_{[0,1/2]}$, and $\psi=\phi-\int_{[0, 1]}\phi \, dm$. We have  (see~\cite[p.47]{Buzzi}) that 
	\[
	\int_{[0, 1]} \phi\cdot\phi\circ T_{(\omega,i)}^n~dm=
	\left\{
	\begin{aligned}
	&2~\text{if}~n<n_c(\omega,i)\\
	&1~\text{otherwise.}
	\end{aligned}
	\right.
	\]
	Therefore, 
		\begin{equation}\label{eq:key}
	\int_{[0, 1]} \psi\cdot\psi\circ T_{(\omega,i)}^n~dm=
         \left\{
	\begin{aligned}
	&1~\text{if}~n<n_c(\omega,i)\\
	&0~\text{otherwise.}
	\end{aligned}
         \right.
	\end{equation}
	Now, observe that 
	\begin{small}
		\[
		\mathds E\left(\sum_{n=0}^{N-1}\psi\circ T_{(\omega,i)}^n \right)^2=\sum_{n=0}^{N-1} \mathds E(\psi^2\circ T_{(\omega,i)}^n)
		+2\sum_{n=0}^{N-1}\sum_{m=n+1}^{N-1}\mathbb E\left(\psi\cdot\psi\circ T_{\sigma^n (\omega,i)}^{m-n}\right),
		\]
	\end{small}
where  $\mathds E$ denotes  the expectation w.r.t. $m$. Since $\mu_{(\omega, i)}=m$ for $(\omega, i) \in \Omega$, we have that 
	 \[\mathds E(\psi^2\circ T_{(\omega,i)}^n)=\int_{[0, 1]} \psi^2 dm=2.\] For the other term, remark that by \eqref{eq:key}
	\begin{equation}
	\sum_{m=n+1}^{N-1}\mathbb E(\psi\cdot\psi\circ T_{\sigma^n(\omega,i)}^{m-n})=\min\left(n_c(\sigma^n (\omega,i)), N-n\right)-1
	\end{equation}
	so that we get
	\[\lim_{N\to \infty} \frac{1}{N}\mathds E\left(\sum_{n=0}^{N-1}\psi\circ T_{(\omega,i)}^n \right)^2=\lim_{N\to \infty} \frac{1}{N}\sum_{n=0}^{N-1}\min\left(n_c(\sigma^n (\omega,i)), N-n\right),\]
	for $\mathbb P$-a.e $(\omega,i)\in\Omega$. On the other hand, since $n_c$ is a measurable, positive and non-integrable function, it follows from Lemma~\ref{Mac} below (applied to $f_m(\omega,i)=\min(n_c(\omega,i),m)$ and $f(\omega,i)=n_c(\omega,i)$) that 
\[
\lim_{N\to \infty} \frac{1}{N}\sum_{n=0}^{N-1}\min\left(n_c(\sigma^n (\omega,i)), N-n\right)=+\infty, \quad \text{for $\mathbb P$-a.e. $(\omega, i)\in \Omega$.}
\]
In particular, we see that the first assertion of Theorem~\ref{TM} does not hold. 
\begin{remark}
Observe that the observable $\psi$ constructed above is deterministic, i.e. it does not depend on $(\omega, i) \in \Omega$. In particular, it satisfies~\eqref{EQ}. Thus, the example we discussed shows that the condition such as~\eqref{eq:condobservable} is needed for Theorem~\ref{TM} to hold. This provides an affirmative answer to the question posed in the first version of~\cite{DDS}.
\end{remark}
\begin{remark}
We note that our example can be further simplified: we can replace $T_1$ with the identity map on $[0, 1]$.
\end{remark}

	\begin{lemma}[Maker's theorem for positive non-integrable functions]\label{Mac}
	
Let $(\mathcal X,\mathcal B,\nu)$ be a probability space and $T:\mathcal X\to\mathcal X$  an ergodic probability preserving transformation. Let $(f_m)_{m\in \N}$ be a sequences of measurable real-valued and nonnegative functions on $\mathcal X$ so that $\lim_{m\to\infty}f_m=f$ exists $\nu$-a.e. and $\int_{\mathcal X} f(x)d\nu(x)=\infty$. Then 
	$$
	\lim_{N\to\infty}\frac 1N\sum_{n=0}^{N-1}f_{N-n}\circ T^n=+\infty, \quad \text{for $\nu$-a.e. $x\in \mathcal X$.}
	$$  
	\end{lemma}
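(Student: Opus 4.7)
The plan is to reduce this to the classical version of Maker's theorem (for dominated sequences) by truncating at height $M$ and then letting $M\to\infty$. Fix $M>0$ and set $f_m^M(x):=\min(f_m(x),M)$ and $f^M(x):=\min(f(x),M)$. Since $f_m\to f$ $\nu$-a.e., we also have $f_m^M\to f^M$ $\nu$-a.e., and the sequence $(f_m^M)_{m\in\N}$ is uniformly dominated by the integrable constant $M$. Hence, the classical Maker's theorem (see, e.g., Maker~1940 or Petersen's ergodic theory monograph) applies and yields a full-measure set $\mathcal X_M\subset\mathcal X$ on which
\[
\lim_{N\to\infty}\frac 1N\sum_{n=0}^{N-1}f_{N-n}^M\circ T^n(x)=\int_{\mathcal X}f^M\, d\nu.
\]

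Now pick a sequence $M_k\uparrow +\infty$ and let $\mathcal X_0=\bigcap_{k}\mathcal X_{M_k}$, which still has full $\nu$-measure. For each $x\in\mathcal X_0$ and each $k$, the inequality $f_m\ge f_m^{M_k}\ge 0$ gives
\[
\liminf_{N\to\infty}\frac 1N\sum_{n=0}^{N-1}f_{N-n}\circ T^n(x)\ge \lim_{N\to\infty}\frac 1N\sum_{n=0}^{N-1}f_{N-n}^{M_k}\circ T^n(x)=\int_{\mathcal X}f^{M_k}\, d\nu.
\]

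Finally, by the monotone convergence theorem, $\int_{\mathcal X}f^{M_k}\, d\nu\nearrow\int_{\mathcal X}f\, d\nu=+\infty$ as $k\to\infty$. Letting $k\to\infty$ in the previous display on the full measure set $\mathcal X_0$ yields $\liminf_{N\to\infty}\frac 1N\sum_{n=0}^{N-1}f_{N-n}\circ T^n(x)=+\infty$ for every $x\in\mathcal X_0$, which is the claim.

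The only delicate point is the interchange of the countable intersection $\mathcal X_0=\bigcap_k \mathcal X_{M_k}$ with the limit $M_k\to\infty$; this is harmless because we select in advance a countable exhausting sequence $(M_k)$ rather than trying to argue for uncountably many $M$ simultaneously. No genuine obstacle arises: the argument is a standard truncation-plus-monotone-convergence reduction, whose only real content is the classical (bounded) Maker theorem.
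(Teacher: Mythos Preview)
Your proof is correct and follows essentially the same strategy as the paper: truncate, apply the classical (dominated) Maker theorem, and then let the truncation level go to infinity via monotone convergence. The only difference is in the choice of truncation: the paper first reduces to a monotone sequence by replacing $f_m$ with $g_m=\inf\{f_n:n\ge m\}$ and then truncates via $f_m\cdot\mathbb I_{\{f\le M\}}$ (the monotonicity being needed precisely so that this truncation is bounded by $M$), whereas your truncation $\min(f_m,M)$ is bounded by $M$ directly and so sidesteps the monotone reduction entirely---a slightly cleaner implementation of the same idea.
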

	\begin{proof}
	Firstly, by replacing $f_m$ with $g_m=\inf\{f_n:\,n\geq m\}$, we can assume without any loss of generality that $f_m\leq f_{m+1}$ for all $m\in \N$. Indeed, it suffices to observe that  $g_m$ also converges to $f$ and that 
	$$\frac 1N\sum_{n=0}^{N-1}f_{N-n}\circ T^n\geq \frac 1N\sum_{n=0}^{N-1}g_{N-n}\circ T^n, \quad N\in \N.$$
	
Let us fix some $M>0$ and set $A_M=\{x\in \mathcal X: f(x)\leq M\}$. Set $f_{m}^{(M)}=f_m\cdot \mathbb I_{A_M}$ and $f^{(M)}=f\cdot\mathbb I_{A_M}$, where $\mathbb I_A$ denotes the indicator function  of a set $A$. Then, since $f_m$ is increasing in $m$, we have that 
	$$
	f_m^{(M)}\leq f^{(M)}\leq M.
	$$
	Thus, by applying  the classical Maker's theorem~\cite{Maker}, we obtain that
	$$
	\lim_{N\to\infty}\frac 1N\sum_{n=0}^{N-1}f_{N-n}^{(M)}\circ T^n=\int_{\mathcal X} f^{(M)}d\nu=\int_{A_M}fd\nu, \quad \text{$\nu$-a.e.}
	$$  
	On the other hand, since $f_m\geq f_m^{(M)}$ we have that 
	$$
	\liminf_{N\to \infty} \frac 1N\sum_{n=0}^{N-1}f_{N-n}\circ T^n\geq  \lim_{N\to \infty} \frac 1N\sum_{n=0}^{N-1}f_{N-n}^{(M)}\circ T^n= \int_{A_M}fd\nu.
	$$
	By taking the limit as $M\to\infty$, we conclude that
	$$
	\lim_{N\to\infty}\frac 1N\sum_{n=0}^{N-1}f_{N-n}\circ T^n=\int_{\mathcal X} fd\nu=+\infty, \quad  \text{$\nu$-a.e.}
	$$ 
The proof of the lemma is completed. 
\end{proof}

\section{Appendix B: estimation of $K$}
In this section, we explore possible strategies to estimate the tempered random variable $K$ that appears in the statement of Theorem~\ref{TM}. More precisely, under the additional assumption \[\esssup_{\omega\in\Omega}\|L_\omega\|<+\infty,\]
we show that $K\in L^p$ for small enough $p>0$.
\subsection{General strategy}
	We begin by observing that $\tilde \Pi(\omega)$ given by~\eqref{Pi} satisfies 
\[
\| \tilde \Pi(\omega) h\|_{BV} \le \|h\|_{BV}+ \bigg | \int_X h\, d\mu_\omega \bigg |\le (1+C_{var})\| h\|_{BV},
\]
and
\[
\| (\Id-\tilde \Pi(\omega))h\|_{BV}\le C_{var} \|h\|_{BV}, \quad h\in BV.
\]
Hence, we observe that  $\tilde D$ appearing in the statement of Corollary~\ref{Cor} (see~\eqref{est11} and~\eqref{est22}) can be constructed as:
\[
\tilde D(\omega):=(1+C_{var})\sup_{n\ge 0}(\|L_\omega^n\rvert_{BV_\omega^0}\|e^{\lambda' n}).
\]
We now recall that $K$ is given as a sum of $\tilde D$ and a number $C_{var}$ (see~\eqref{K-exp}). Therefore, from now on we will concentrate on a possible strategy to estimate $\tilde D$, also ignoring  the constant factor $1+C_{var}$. We suppose that 
\begin{equation}\label{bound2}
B:=\esssup_{\omega \in \Omega}\|L_\omega \|<+\infty.
\end{equation}

We begin by noting (see the proof of~\cite[Proposition 28]{DDS}) that $\lambda'$ can be chosen as $\lambda'=-\lambda_2-\epsilon$ for any $\epsilon>0$, where $\lambda_2<0$ is the second largest Lyapunov exponent of the cocycle $(L_\omega)_{\omega \in \Omega}$. By Kingman's subadditive ergodic theorem, we have that 
\[
\lambda_2=\inf_{n\in \N} \frac 1 n\int_\Omega \log \|L_\omega^n \rvert_{BV_\omega^0} \| \, d\mathbb P(\omega).
\]
In particular, there exists $n_0\in \N$ such that 
\[
\frac{1}{n_0}\int_\Omega \log \|L_\omega^{n_0} \rvert_{BV_\omega^0} \| \, d\mathbb P(\omega)<\lambda_2+\frac{\epsilon}{2}.
\]
Set
\[
A_n=\{ \omega \in \Omega: \|L_\omega^n\rvert_{BV_\omega^0}\|>e^{(\lambda_2+\epsilon)n}\}=\bigg \{\omega \in \Omega: \frac 1 n \log \|L_\omega^n\rvert_{BV_\omega^0}\|>\lambda_2+\epsilon \bigg \}.
\]
Set 
\[
g(\omega):=\frac{1}{n_0}\log \|L_\omega^{n_0}\rvert_{BV_\omega^0}\|, \quad \omega \in \Omega.
\]
By $S_n g$ we will denote the $n$-th Birkhoff sum of $g$ with respect to $\sigma$. It follows from~\eqref{bound2} and~\cite[Lemma 3.1]{GS} that there exists $M>0$ such that 
\[
\log \| L_\omega^n\rvert_{BV_\omega^0}\| \le S_n g(\omega)+M,
\]
for $\mathbb P$-a.e. $\omega \in \Omega$ and $n\in \N$. Hence, if $\omega\in A_n$
\[
\frac 1 n S_n g(\omega)+\frac M n >\int_\Omega g\,d\mathbb P+\frac{\epsilon}{2},
\]
so that
\[
\frac 1 n S_n g(\omega)-\int_\Omega g\,d\mathbb P>\frac{\epsilon}{2}-\frac{M}{n}>\frac{\epsilon}{4}
\]
for $n$ sufficiently large. In particular, \[A_n\subset \bigg \{\omega \in \Omega: \frac 1 n S_n g(\omega)-\int_\Omega g\,d\mathbb P>\frac{\epsilon}{4} \bigg \}.\] Provided that $g$ satisfies the large deviation property (as stated in~\cite[Theorem A]{DFGTV2}), there exists $\alpha \in (0,1)$ such that $\mathbb P(A_n)\le \alpha^n$ for $n$ sufficiently large. In particular,  $\sum_{n=1}^\infty \mathbb P(A_n)<+\infty$. By the Borel-Cantelli lemma, we can conclude that for $\mathbb P$-a.e. $\omega \in \Omega$, there exists a smallest $N_\omega \in \N$ such that 
\[
\|L_\omega^n\rvert_{BV_\omega^0}\| \le e^{-\lambda' n}, \quad n\ge N_\omega.
\]
Taking into account~\eqref{bound2} we see that  
\[
\tilde D(\omega) \le e^{(\lambda'+\log B)N_\omega}, \quad \text{for $\mathbb P$-a.e.  $\omega \in \Omega$.}
\]
Moreover, observe that 
$
\{N_\omega = k\} \subset A_{k-1}, 
$ and thus $\mathbb P(\{N_\omega=k\})\le \alpha^{k-1}$ for large $k\in \N$. In particular,  for each $q>0$, we have that there exists $C>0$ such that 
\[
\int_\Omega \tilde D(\omega)^q\, d\mathbb P(\omega)\le C\sum_{k=0}^\infty e^{q(\lambda'+\log B)k}\alpha^{k-1}<+\infty,
\]
provided that 
\[
\log \alpha+q(\lambda'+\log B)<0,
\]
which is satisfied whenever $q>0$ is sufficiently small.

\subsection{A concrete example}

Finally, let us demonstrate the above strategy in a concrete example where the idea can be considerably  simplified. In particular, there is no need to compute the exact value of $\lambda_2$.

Let us assume that $X=[0,1]$ is unit interval and that $\mathbb P$ a.s. we have
$T_\omega\in\{T_1,T_2\}$, where $T_1x=2x\text{ mod }1$ and $T_2$ is the identity map on $X$. We assume also that $\mathbb P(\{\omega: T_\omega=T_i\})>0$ for $i=1,2$.
In this case we have that  $\mu_\omega=m$, where $m$ is the Lebesgue measure on $X$. In particular, $\mathcal L_\omega=L_\omega$ for $\omega \in \Omega$.
Since $T_1$ is expanding,  there are constants $C,\lambda_1>0$ such that for every $k\in \N$ and $h\in BV^0$, we have that 
\begin{equation}\label{99}
\|\mathcal L_{1}^kh\|_{BV}\leq Ce^{-\lambda_1 k}\|h\|_{BV},
\end{equation}
where $\mathcal L_1$ denotes the transfer operator associated to $T_1$.
Let $N_n(\omega)$ be the number of $0\leq j<n$ such that $T_{\sigma^j\omega}=T_1$, i.e.  $N_n(\omega)=\sum_{j=0}^{n-1}\mathbb I_A( \sigma^j\omega)$ where $A=\{\omega: T_\omega=T_1\}$ and $\mathbb I_A$ denotes the indicator function of $A$.  By~\eqref{99} and using that the transfer operator of $T_2$ is the identity operator, we have that 
\begin{equation}\label{999}
\|\mathcal L_\omega^n h\|_{BV}\leq Ce^{-\lambda_1 N_n(\omega)}\|h\|_{BV},
\end{equation}
for $\omega \in \Omega$, $h\in BV^0$ and $n\in \N$.
Let $a:=\mathbb P(A)>0$ and set \[N_\omega=\inf \bigg \{N: N_n(\omega)\geq \frac12 an,\,\,\,\forall n\geq N \bigg\}.\] Then, \eqref{999} implies that for $\mathbb P$-a.e. $\omega \in \Omega$ and $n\in \N$,
$$
\|\mathcal L_\omega^n\rvert_{BV^0}\|\leq K(\omega)e^{-\lambda n},
$$
where 
$K(\omega)=Ce^{\lambda_1 N_\omega}$ and $\lambda=a\lambda_1/2$.
Observe that for $k\ge 1$, \[\{N_\omega=k+1\}\subset \left\{\left|\frac{N_{k}(\omega)}{k}-a\right|>\frac 1 2 a\right\}.\] Thus, if the stationary process $(\mathbb I_A\circ\sigma^n)$ satisfies an appropriate large deviations principle (e.g. choosing maps $T_\omega \in \{T_1, T_2\}$ in an i.i.d. fashion), we can conclude that $N_\omega$ is integrable. Hence, $\log K$ is integrable and consequently also tempered. 
This provides an explicit formula for $K(\omega)$, and our scaling condition~\eqref{eq:condobservable} means that $\|\psi_\omega\|_{BV}$ is small when it takes a lot of time for the Birkhoff average to get close enough to its mean.




	\begin{small}
		\section{Acknowledgements}
		We would like to express our gratitude to Alex Blumenthal for useful discussions related to Appendix B and to referees for their constructive criticism.
		\\D.D. was supported in part by  Croatian Science Foundation under the project IP-2019-04-1239 and by the University of Rijeka under the projects uniri-prirod-18-9 and uniri-prprirod-19-16.
		\\J.S. was supported by the European Research Council (ERC) under the European Union's Horizon 2020 research and innovation programme (grant agreement No 787304).
	\end{small}

	\begin{small}
		\section{Data availability}
		Data sharing not applicable to this article as no datasets were generated or analysed during the current study.
	\end{small}

	\bibliographystyle{amsplain}
	\begin{footnotesize}
		
	\end{footnotesize}
\end{document}